\newtheorem{theorem}{Theorem} \rm
\newtheorem{lemma}[theorem]{Lemma}
\newtheorem{conjecture}[theorem]{Conjecture}
\newtheorem{claim}[theorem]{Claim}
\newtheorem{definition}[theorem]{Definition}
\newtheorem{problem}[theorem]{Problem}
\newtheorem{question}[theorem]{Question}
\theoremstyle{plain}
\title{Squares of subcubic planar graphs without cycles of length 4--8 are 6-choosable}
\author
{
Seog-Jin Kim$^{\rm a}$\thanks{E-mail: {\tt skim12@konkuk.ac.kr
}},
Rong Luo$^{\rm b}$\thanks{E-mail: {\tt rluo@math.wvu.edu
}},
\\
{\footnotesize$^{\rm a}$Department of Mathematics Education, Konkuk university, Korea. }\\
{\footnotesize$^{\rm b}$ Deparment of Mathematics, West Virginia University, USA}\\
}
\date{}
\begin{document}

\maketitle

\begin{abstract}
The {\em square} of a graph $G$, denoted $G^2$, has the same vertex set as $G$ and an edge between any two vertices at distance at most $2$ in $G$. Wegner (1977) conjectured that for a planar graph $G$, $\chi(G^2) \leq 7$ if $\Delta(G) = 3$, $\chi(G^2) \leq \Delta(G)+5$ if $4 \leq \Delta(G) \leq 7$, and $\chi(G^2) \leq \lfloor 3\Delta(G)/2 \rfloor$ if $\Delta(G) \geq 8$, and Thomassen (2018) confirmed the conjecture for $\Delta(G) = 3$. Dvo\v{r}\'{a}k et al. (2008) and Feder et al. (2021) further conjectured that $\chi(G^2) \leq 6$ for cubic bipartite planar graphs. A natural question is whether this bound also holds for the list-chromatic number, i.e., whether $\chi_{\ell}(G^2) \leq 6$ for such graphs. More generally, it is of interest to determine sufficient conditions ensuring $\chi_{\ell}(G^2) \leq 6$ for subcubic planar graphs. In this paper, we prove that $\chi_{\ell}(G^2) \leq 6$ for subcubic planar graphs containing no $k$-cycles for $4 \leq k \leq 8$, improving a result of Cranston and Kim (2008).

\noindent\textbf{Key words.} planar graph, list coloring, square of graph, Combinatorial Nullstellensatz 

\end{abstract}

\section{Introduction}

The {\em square} of a graph $G$, denoted $G^2$, is obtained by adding an edge between any two vertices of $G$ that are at distance at most $2$, while keeping the vertex set unchanged. A graph is {\em subcubic} if its maximum degree $\Delta(G)$ is at most $3$, and a cycle of length $k$ is called a \emph{$k$-cycle}. The {\em girth} of $G$, denoted $g(G)$, is the length of its shortest cycle.

Graph coloring and, in particular, coloring squares of graphs has received considerable attention. Let $\chi(G)$ denote the {\em chromatic number} of $G$. A {\em list assignment} for $G$ assigns to each vertex a list of colors, and $G$ is {\em $L$-colorable} if it admits a proper coloring $f$ with $f(v) \in L(v)$ for all $v$. If $G$ is $L$-colorable whenever each list has size at least $k$, then $G$ is {\em $k$-choosable}, and the {\em list chromatic number} $\chi_{\ell}(G)$ is the smallest such $k$. Let $x$ be a vertex in $G$. Denote by $N_G(x)$ the set of vertices adjacent to $x$.

One of the most well-known open problems in this area is Wegner's conjecture:

\begin{conjecture}[\cite{Wegner}] \label{conj-Wegner}
For a planar graph $G$,
\[
\chi(G^2)\leq 
\begin{cases} 
7, & \text{if } \Delta(G) = 3,\\
\Delta(G)+5, & \text{if } 4 \leq \Delta(G) \leq 7,\\
\lfloor 3\Delta(G)/2 \rfloor, & \text{if } \Delta(G) \geq 8.
\end{cases}
\]
\end{conjecture}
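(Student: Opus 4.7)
The plan is to attack Wegner's Conjecture via the discharging method, which is the standard vehicle for coloring problems on planar graphs, while being upfront that the full conjecture has resisted nearly fifty years of attempts and a complete resolution is almost certainly out of reach of any routine strategy. The $\Delta(G)=3$ case can be taken as given by Thomassen's 2018 theorem, so the effort should concentrate on the two remaining regimes.

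First I would fix a minimum counterexample $G$: a planar graph with $\Delta(G)=\Delta$ for which $\chi(G^2)$ exceeds the conjectured bound but every smaller planar graph of the same maximum degree satisfies the bound. I would then compile a bank of \emph{reducible configurations}, small local subgraphs whose presence forces a contradiction via a precoloring-extension argument: typically, removing or contracting a piece $H\subseteq G$ yields a smaller graph $G'$ whose square is colorable within the budget by minimality, and one shows the partial coloring can be completed to $G^2$ by a combination of greedy counting, kernel lemmas in oriented auxiliary graphs, or, following the present paper, Alon's Combinatorial Nullstellensatz. Natural targets are low-degree vertices adjacent to short faces, clusters of $2$-vertices, and $3$-vertices in the second neighborhood of $\Delta$-vertices.

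Next I would run a discharging argument: assign initial charges $d(v)-4$ to vertices and $\ell(f)-4$ to faces so that $\sum\text{charge}=-8$ by Euler's formula, then design local redistribution rules that move charge from dense regions to sparse regions so that every element ends with nonnegative charge unless one of the reducible configurations appears. A careful accounting tailored separately to the ranges $4\le\Delta\le 7$ and $\Delta\ge 8$ would then yield the conjectured bound in each regime.

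The main obstacle is the sharpness of the constants. The best general bounds in the literature, of the form $\chi(G^2)\le\lceil 5\Delta/3\rceil+O(1)$ (Molloy--Salavatipour) and $(1+o(1))\cdot 3\Delta/2$ (Havet--van den Heuvel--McDiarmid--Reed), already exhaust the natural discharging machinery, and closing the remaining gap down to exactly $\lfloor 3\Delta/2\rfloor$ or $\Delta+5$ appears to require a substantially richer family of reducible configurations than anyone has produced. This is where I expect the heavy lifting—and likely a genuinely new structural idea, perhaps of the algebraic flavor exploited via the Nullstellensatz in the present paper—will be needed.
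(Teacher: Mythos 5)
This statement is Wegner's Conjecture, which the paper does not prove and does not claim to prove: it is stated as an open conjecture from 1977, with only the $\Delta(G)=3$ case known (Thomassen, and independently Hartke--Jahanbekam--Thomas), and the paper explicitly says the cases $\Delta(G)\geq 4$ remain open. So there is no ``paper's own proof'' to compare against, and your proposal is not a proof either --- it is a research program. You candidly acknowledge as much, which is the right instinct, but the submission contains no completed argument: no reducible configuration is actually verified, no discharging rule is written down or checked, and the central difficulty --- that the best known techniques only reach bounds of the form $\lceil 5\Delta/3\rceil+O(1)$ rather than $\lfloor 3\Delta/2\rfloor$ or $\Delta+5$ --- is named but not overcome. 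A plan that ends with ``a genuinely new structural idea will be needed'' is an accurate assessment of the state of the art, not a proof.

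One smaller point: your proposed initial charge $d(v)-4$ for vertices and $\ell(f)-4$ for faces sums to $-8$ by Euler's formula, which is a legitimate normalization, but note that the present paper uses $2d(v)-6$ and $d(f)-6$ (summing to $-12$), a choice adapted to graphs with large girth and many $2$-vertices. For the dense regime $\Delta\geq 8$ of Wegner's Conjecture the bottleneck is not the choice of normalization but the absence of a sufficient family of reducible configurations, so this detail would not be where the proof succeeds or fails. The correct conclusion is that the statement should remain labeled a conjecture; nothing in your proposal (or in the paper) establishes it.
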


The case $\Delta(G)=3$ was confirmed by Thomassen \cite{Thomassen} and independently by Hartke, Jahanbekam, and Thomas \cite{Hartke} using extensive computer-assisted case analysis. The conjecture for $\Delta(G) \geq 4$ remains open, with recent progress on subcubic planar graphs documented in \cite{Cranston22, JKK, KL23}.

Another closely related conjecture concerns cubic bipartite planar graphs:

\begin{conjecture}[\cite{DST08, FHS}] \label{conj-FHS}
If $G$ is a cubic bipartite planar graph, then $\chi(G^2) \leq 6$.
\end{conjecture}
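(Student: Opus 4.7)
The plan is to attempt a discharging argument on a minimum counterexample, the standard machinery for square-coloring on structurally restricted planar graphs. Suppose $G$ is a cubic bipartite planar graph with $\chi(G^2)\geq 7$ and $|V(G)|$ minimum. Bipartiteness forces every face to have even length, so Euler's formula gives
\[
\sum_{v\in V(G)}\bigl(d(v)-4\bigr)+\sum_{f\in F(G)}\bigl(\ell(f)-4\bigr)=-8,
\]
with every vertex carrying initial charge $-1$, every $4$-face charge $0$, and every face of length $\ell\geq 6$ charge $\ell-4$. The goal is to redistribute charge so that each element ends nonnegative, contradicting this total.

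First I would build a library of reducible configurations. If $H$ is a subgraph such that any proper $6$-coloring of $(G-V(H))^2$ extends to $G^2$, then by minimality $H$ cannot occur in $G$. Since $G$ is cubic, every vertex of $G^2$ has degree at most $6$, so a vertex whose $G^2$-neighborhood uses fewer than $6$ distinct colors extends greedily; this alone rules out many small patterns. More delicate reductions---two $4$-faces joined by a short interface path, or a vertex incident to several short faces---would plausibly require Combinatorial Nullstellensatz applied to the graph polynomial of a local patch of $G^2$, certifying list-extendibility in situations where greedy arguments are too weak. With these in hand, I would design discharging rules sending, say, $(\ell(f)-4)/\ell(f)$ from each face $f$ of length $\ell\geq 6$ to each incident vertex, and a secondary transfer from vertices to their incident $4$-faces, then verify nonnegativity of the final charge using the reducibility lemmas to exclude the starved configurations.

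The main obstacle, I expect, is the $4$-faces themselves: they are the only faces contributing no positive charge, and they can crowd around a single vertex. A cubic vertex incident to three $4$-faces has all six of its $G^2$-neighbors pressed against it, saturating its color demands and leaving no source from which to draw charge. This is precisely the configuration that past partial results---including the theorem of the present paper, which simply forbids $4$-cycles---sidestep by imposing stronger girth-like hypotheses. A genuine resolution of the conjecture would likely require either a substantially sharper polynomial-method reducibility argument for dense $4$-face clusters, or a global argument that leverages bipartiteness more substantively than face-parity alone does, perhaps exploiting a matching or edge-coloring structure on $G$ that limits how tightly $4$-faces can be packed around any single vertex.
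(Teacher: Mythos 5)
The statement you are addressing is Conjecture~\ref{conj-FHS}, which is an \emph{open conjecture}: the paper does not prove it and does not claim to. It only records the partial result of Feder et al.\ (the case where the faces admit a certain three-coloring by size, e.g.\ all face lengths divisible by four) and notes that even $7$-choosability of $G^2$ for cubic bipartite planar graphs is unknown. So there is no proof in the paper against which to match your argument, and your text should be judged on its own as an attempted proof.

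As such, it has a genuine gap --- in fact it is a research plan rather than a proof, and you candidly identify the point where it fails. Concretely: in your charge assignment every vertex starts at $-1$ and every $4$-face at $0$, so the only sources of positive charge are $6^+$-faces; a vertex all of whose incident faces are $4$-faces (which bipartiteness and $3$-regularity do not forbid --- the cube $Q_3$ and the hexagonal prism both contain such vertices, and the prism is exactly the tight example for the bound $6$) receives nothing and ends at $-1$. No reducible configuration is actually exhibited to exclude this situation; the greedy observation that a vertex of $G^2$ has degree at most $6$ only yields extendability when strictly fewer than $6$ colors appear in its neighborhood, which is precisely what fails in the tight cases. The appeals to the Combinatorial Nullstellensatz and to a ``more substantive use of bipartiteness'' are placeholders for the missing content. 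The conclusion is that nothing here constitutes a proof of Conjecture~\ref{conj-FHS}; the conjecture remains open, and your outline, while a reasonable description of the standard machinery (and of why it is insufficient), should not be presented as a proof.
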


This bound is tight, as exemplified by the hexagonal prism. Feder et al. \cite{FHS} verified Conjecture \ref{conj-FHS} in special cases, showing that $\chi(G^2) \leq 6$ if the faces of a bipartite cubic plane graph can be three-colored red, blue, and green such that red faces have even size and blue and green faces have sizes divisible by four. In particular, this implies $\chi(G^2) \leq 6$ if all face sizes are multiples of four.

It is natural to extend these questions to list colorings:

\begin{question}
Does $\chi_{\ell}(G^2) \leq 6$ hold for cubic bipartite planar graphs?
\end{question}

Currently, it is unknown even whether the square of a cubic bipartite planar graph is 7-choosable. More generally, identifying sufficient conditions ensuring $\chi_{\ell}(G^2) \leq 6$ for subcubic planar graphs is an interesting direction.

This line of research echoes the approach taken for Steinberg's conjecture, which stated that planar graphs without 4- or 5-cycles are 3-colorable. Though disproved by Cohen-Addad  et al. \cite{stein}, it inspired the question suggested by Erd\H{o}s (1991) on the minimum $k$ such that planar graphs without cycles of lengths $\{4,\dots,k\}$ are 3-colorable. Borodin et al. \cite{Borodin}, improving earlier results \cite{abbott, sanders}, showed that $k \leq 7$, i.e., planar graphs with no cycles of lengths $4$ through $7$ are $3$-colorable.

Following this perspective, one can ask:

\begin{problem} \label{problem-main}
Determine the smallest $k$ such that $\chi_{\ell}(G^2) \leq 6$ whenever $G$ is a subcubic planar graph with no cycles of lengths $\{4,\dots,k\}$.
\end{problem}

Cranston and Kim \cite{CK} showed that $\chi_{\ell}(G^2) \leq 6$ for subcubic planar graphs of girth at least 9. In this paper, we improve upon their result and resolve Problem \ref{problem-main} with the following theorem:

\begin{theorem} \label{main-thm}
If $G$ is a subcubic planar graph containing no $k$-cycles for $4 \leq k \leq 8$, then $\chi_{\ell}(G^2) \leq 6$.
\end{theorem}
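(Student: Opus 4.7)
The plan is a standard minimum-counterexample discharging argument, augmented with the Combinatorial Nullstellensatz to handle the tight configurations that a pure degree count cannot reduce. Assume $G$ is a counterexample minimizing $|V(G)|+|E(G)|$, fix a planar embedding, and fix a bad 6-list assignment $L$ for $V(G^2)$. By minimality, every proper subgraph of $G$ has an $L$-coloring of its square, so deleting any vertex or small set gives a partial coloring we can try to extend.

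The first step is to establish the easy reducible configurations via degrees in $G^2$. A vertex $v$ with $d_{G^2}(v)\le 5$ is extendable greedily, so we may assume $d_{G^2}(v)\ge 6$ for every $v$. This already forces $\delta(G)\ge 2$; no two 2-vertices are adjacent; every 2-vertex has two non-adjacent degree-3 neighbors; and no 2-vertex lies on a triangle. The second step handles configurations where $d_{G^2}(v)=6$ exactly and greedy fails. For these we choose a small set $S\subseteq V(G)$, color $G\setminus S$ by minimality, and attempt to extend. The extension problem is encoded as a graph polynomial on $G[S]^2$, and by the Combinatorial Nullstellensatz it suffices to exhibit a monomial of appropriate multi-degree whose coefficient is non-zero. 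The plan is to use this machinery to forbid, among others: two 2-vertices sharing a common 3-neighbor; a 2-vertex whose 3-neighbor is incident to a triangle; and short ``chains'' of restricted-degree vertices lying on a common large face.

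With the structural lemmas in hand, the plan is then a discharging argument. Assign initial charges $\mu(v)=d(v)-4$ and $\mu(f)=\ell(f)-4$, so Euler's formula yields $\sum_v\mu(v)+\sum_f\mu(f)=-8$. Because $G$ has no $k$-cycles for $4\le k\le 8$, every face is either a 3-face (charge $-1$) or has length at least 9 (charge $\ge 5$), while 3-vertices carry $-1$ and 2-vertices carry $-2$. I would design transfer rules in which each face of length $\ge 9$ distributes portions of its surplus to its incident 2- and 3-vertices, and in which 3-vertices shared with a triangle forward a fixed amount to that triangle. Using the bounds from Step 2 on how many 2-vertices and how many adjacent 3-faces can cluster along a given large face, the goal is to verify that every vertex and every face ends with non-negative charge, contradicting the negative total.

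The main obstacle is the interplay between triangles and 2-vertices, since triangles carry negative face charge and simultaneously force their three incident 3-vertices to have $d_{G^2}=6$; if a 2-vertex lies on the same large face nearby, the local configuration becomes very tight and purely combinatorial reducibility arguments fail. This is precisely where the Combinatorial Nullstellensatz is needed, and the hardest technical step is the verification of non-vanishing coefficients in the graph polynomials attached to these triangle-plus-2-vertex configurations. Once a sufficiently rich family of such configurations is shown reducible, the discharging rules can be kept simple, and the contradiction follows by a short case analysis on the types of large faces.
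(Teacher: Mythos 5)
Your overall framework (minimal counterexample, reducible configurations partly verified by the Combinatorial Nullstellensatz, then discharging against faces of length at least $9$) is the same as the paper's, but the proposal has a genuine gap: the local configurations you propose to prove reducible are not in fact reducible, and without them your discharging cannot close. Concretely, you claim you will forbid ``two $2$-vertices sharing a common $3$-neighbor'' and ``a $2$-vertex whose $3$-neighbor is incident to a triangle.'' Neither is reducible at the level of $6$-lists: for the first, deleting the two $2$-vertices and their common neighbor leaves residual lists of sizes roughly $2,2,1$ on a triangle of $G^2$, and no Nullstellensatz monomial of total degree $3$ fits in multidegree $(1,1,0)$; for the second, the configuration occurs inside the segment $S_2$ of Claim~\ref{segment-S}, which the paper must explicitly allow in a minimal counterexample. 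Indeed both patterns appear repeatedly inside the configurations $H_4$ and $F_1$--$F_{12}$ that the paper labors to exclude by much longer arguments; if either were locally reducible, most of Sections~4.1--4.2 would be unnecessary. (A smaller symptom of the same looseness: a triangle vertex whose neighbors are all $3$-vertices has $d_{G^2}(v)=7$, not $6$.)

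What is actually needed, and what your plan does not anticipate, is that reducibility must be established for entire $9$- and $10$-faces carrying the maximal load of $2$-vertices and adjacent triangles. The paper first bounds $t(C)$, the number of $2$-vertices on plus triangles adjacent to a $9^+$-cycle $C$, by a segment decomposition (Claims~\ref{segment-S}--\ref{lemma-C11}), which already gives $t(C)\le d(C)-6$ for $d(C)\ge 11$; the whole difficulty is the two remaining lengths $9$ and $10$, where one must enumerate all extremal face types ($H_1$--$H_4$ and $F_1$--$F_{12}$) and kill each one individually, some by delicate hand-built coloring extensions through Lemmas~\ref{lem-key-K4-edge}--\ref{lem-J2} and Hall's theorem, others by computing a nonvanishing coefficient of a degree-$22$ to degree-$33$ graph polynomial. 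Your discharging with $\mu(v)=d(v)-4$, $\mu(f)=\ell(f)-4$ is a legitimate alternative normalization, but verifying that every $9^+$-face ends nonnegative forces exactly the same bound $t(C)\le d(C)-6$ (or an equivalent), so the case analysis cannot be avoided by a cleverer rule. As written, the proposal defers all of this (``I would design transfer rules,'' ``once a sufficiently rich family \dots is shown reducible''), so the hard content of the theorem is missing.
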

\section{Elements of configurations}
In this section, we prove several key configurations which are of independent interest.
First, the following lemma is folklore, but we include here for completeness.


\begin{lemma} \label{lem-key-K4-edge}
Let $P_4=v_1v_2v_3v_4$ be a path and $L$ be a list assignment of $P_4$. 
If $|L(v_1)| = |L(v_3)| = |L(v_4)| = 2$ and $|L(v_2)| = 3$, then $P_4^2$ is $L$-colorable. 
\end{lemma}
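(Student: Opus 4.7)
The plan is to recognize that $P_4^2$ is isomorphic to $K_4$ with the edge $v_1v_4$ removed, so $v_2$ is adjacent to all three other vertices while $v_1, v_3, v_4$ induce a $3$-vertex path in $P_4^2$. I would color $v_1, v_3, v_4$ first; since $|L(v_2)| = 3$ equals the degree of $v_2$, the extension to $v_2$ succeeds if and only if the three chosen colors do not coincide as a set with $L(v_2)$.

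The natural case split is on whether $L(v_1), L(v_3), L(v_4)$ are all contained in $L(v_2)$. If one of them, say $L(v_3)$, contains a color outside $L(v_2)$, I would assign that color to $v_3$ and then greedily color $v_1, v_4$ (each still has at least one remaining option), which guarantees $\{c(v_1), c(v_3), c(v_4)\} \neq L(v_2)$. If instead $L(v_3) \subseteq L(v_2)$ but $L(v_1) \not\subseteq L(v_2)$, I would first pick $c(v_1) \in L(v_1) \setminus L(v_2)$; this color automatically lies outside $L(v_3)$, so both elements of $L(v_3)$ remain available, and we can extend to $v_3$ and then to $v_4$ with the same non-collision guarantee. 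The case $L(v_4) \not\subseteq L(v_2)$ is symmetric.

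The remaining case is $L(v_1), L(v_3), L(v_4) \subseteq L(v_2)$, so all three are $2$-element subsets of a fixed $3$-element set. Because any two $2$-subsets of a $3$-set share an element, $L(v_1) \cap L(v_4) \neq \emptyset$; using the non-adjacency of $v_1$ and $v_4$ in $P_4^2$, I would set $c(v_1) = c(v_4)$ equal to a common element, then pick $c(v_3) \in L(v_3) \setminus \{c(v_1)\}$. Now $\{c(v_1), c(v_3), c(v_4)\}$ has only two distinct elements, so it cannot equal the $3$-element set $L(v_2)$, and $v_2$ can be colored.

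I do not anticipate a serious obstacle; the main subtlety is to exploit the non-adjacency of $v_1$ and $v_4$ in the all-subsets case so that the three chosen colors span only two distinct values. As an alternative, one could invoke Alon's Combinatorial Nullstellensatz on the graph polynomial $P = (x_1-x_2)(x_1-x_3)(x_2-x_3)(x_2-x_4)(x_3-x_4)$; a short expansion shows that the coefficient of $x_1 x_2^2 x_3 x_4$ equals $1$, matching the list-size requirements $(|L(v_1)|, |L(v_2)|, |L(v_3)|, |L(v_4)|) = (2,3,2,2)$ and yielding the conclusion immediately.
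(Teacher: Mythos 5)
Your proof is correct, and it organizes the case analysis differently from the paper. The paper splits on whether $L(v_1)\cap L(v_4)$ is empty: if not, it identifies $v_1$ and $v_4$ with a common color and finishes greedily; if so, it colors $v_2$ \emph{first} with a color of $L(v_2)\setminus L(v_3)$, which by disjointness must miss one of $L(v_1),L(v_4)$, and then colors the rest greedily. You instead keep $v_2$ for last throughout and split on whether some $2$-list escapes $L(v_2)$: if so, you spend that escaping color first so that the three colors on $v_1,v_3,v_4$ can never cover $L(v_2)$; if all three $2$-lists sit inside the $3$-set $L(v_2)$, the pigeonhole on $2$-subsets of a $3$-set forces $L(v_1)\cap L(v_4)\neq\emptyset$, and you reuse the paper's identification trick so that only two distinct colors appear on $v_2$'s neighbourhood. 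Both arguments hinge on the same two facts --- $v_1\not\sim v_4$ in $P_4^2$ and the slack of $|L(v_2)|=3$ against three neighbours --- so neither buys extra generality; yours has the mild advantage that the reduction to the intersection case is automatic rather than assumed, while the paper's "color $v_2$ first" step is slightly slicker in the disjoint case. Your Combinatorial Nullstellensatz alternative also checks out: the coefficient of $x_1x_2^2x_3x_4$ in $(x_1-x_2)(x_1-x_3)(x_2-x_3)(x_2-x_4)(x_3-x_4)$ is indeed $1$, which matches the list sizes $(2,3,2,2)$ and gives a one-line proof in the spirit of the paper's Section 4.2.
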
 
\begin{proof}
If $L(v_1) \cap L(v_4) \neq \emptyset$, then  first color $v_1$ and $v_4$ with  a color $c \in L(v_1) \cap L(v_4)$, and then color $v_3$ and $v_2$ in this order to obtain an $L$-coloring of $P_4^2$.

If $L(v_1) \cap L(v_4) = \emptyset$, then  first color $v_2$ with a color $c \in L(v_2) \setminus L(v_3)$.  Since $L(v_1) \cap L(v_4) = \emptyset$, $c \notin L(v_1)$ or $c \notin L(v_4)$.  
Without loss of generality, we may assume that $c \notin L(v_1)$.  Then, greedily color $v_4, v_3, v_1$ in this order to obtain an $L$-coloring of $P_4^2$.  

In either case, we obtain an $L$-coloring of $P_4^2$ and this proves the lemma.
\end{proof}


\begin{lemma} \label{lem-key-J1} 
Let $J_1$ be the graph  in Figure \ref{fig-J12}  and $L$ be a list assignment of $J_1$. If  $|L(v_1)| = 2, |L(v_2)| = 3,  |L(v_3)| = 4, |L(v_4)| = 3$,  and $|L(v_5)| = 3$, then $J_1^2$ is $L$-colorable.
\end{lemma}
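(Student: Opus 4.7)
The plan is to color the five vertices in the order $v_2, v_4, v_5, v_3, v_1$, using one crucial non-greedy choice at the opening step. Because $|L(v_2)|=3 > 2 = |L(v_1)|$, the set $L(v_2)\setminus L(v_1)$ is non-empty, and I would pick $c_2$ from it. The purpose of this choice is that assigning $c_2$ to $v_2$ does not remove any color from $L(v_1)$, so the tightly constrained vertex $v_1$ can safely be deferred to the end even though $v_1$ and $v_2$ are adjacent in $J_1^2$.

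After $v_2$ is colored, I would color $v_4$ and then $v_5$ from their size-$3$ lists, each time excluding at most two already-used colors, so a valid color remains at each step. Next I would color $v_3$: when its turn comes, only $v_2, v_4, v_5$ among its neighbors in $J_1^2$ are colored, so the three colors $c_2, c_4, c_5$ need be avoided, and $|L(v_3)|=4$ guarantees a surviving color $c_3$. Finally, $v_1$ needs to avoid only $c_3$ within $L(v_1)$ (since $c_2\notin L(v_1)$ by construction, and $v_1$ is expected to be non-adjacent in $J_1^2$ to $v_4$ and $v_5$), so $|L(v_1)\setminus\{c_3\}|\ge 1$ produces the last color.

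The main obstacle will be justifying the color-count bookkeeping against the precise adjacency structure of $J_1^2$. The argument above presumes that $N_{J_1^2}(v_1)\subseteq\{v_2,v_3\}$ and that $N_{J_1^2}(v_3)\subseteq\{v_1,v_2,v_4,v_5\}$, with $\{v_2,v_3,v_4,v_5\}$ inducing enough of a clique in $J_1^2$ so that the tallies at $v_4, v_5, v_3$ are tight. If the figure of $J_1$ introduces edges that break these assumptions -- for instance by giving $v_1$ a third neighbor in $J_1^2$ or by forcing extra distance-$2$ coincidences -- the natural refinement is to split on whether $L(v_4)\cap L(v_5)=\emptyset$ and apply Lemma~\ref{lem-key-K4-edge} to the induced path $v_1v_2v_3v_4$ (or $v_1v_2v_3v_5$) inside a reduced list instance. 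Should a stubborn case remain, Combinatorial Nullstellensatz, already flagged in the paper's keywords, provides a final backstop: the existence of an $L$-coloring of the small graph $J_1^2$ for any lists of the prescribed sizes can be certified by computing the coefficient of a suitable monomial in the graph polynomial of $J_1^2$.
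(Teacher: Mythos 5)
Your main argument breaks at the last step. In $J_1$ (Figure~\ref{fig-J12}) the vertex $v_5$ is adjacent to $v_2$ and $v_3$, so $d_{J_1}(v_1,v_5)=2$ and hence $v_1v_5\in E(J_1^2)$; in fact $J_1^2=K_5-v_1v_4$. Thus when you finally color $v_1$ you must avoid not only $c_3$ but also $c_5$, and since $|L(v_1)|=2$ it can happen that $L(v_1)=\{c_3,c_5\}$ with $c_3\ne c_5$. For instance, with $L(v_1)=\{1,2\}$, $L(v_2)=\{1,2,3\}$, $L(v_4)=\{4,5,6\}$, $L(v_5)=\{1,4,5\}$, $L(v_3)=\{2,3,4,5\}$, your procedure admits the run $c_2=3$, $c_4=4$, $c_5=1$, $c_3=2$, leaving nothing for $v_1$. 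So the statement that ``$|L(v_1)\setminus\{c_3\}|\ge 1$ produces the last color'' rests precisely on the adjacency assumption you flagged as uncertain, and that assumption is false.

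The fallbacks you sketch do not close the gap as stated. Applying Lemma~\ref{lem-key-K4-edge} to the ``path'' $v_1v_2v_3v_5$ is unsound: the square of that abstract path has no edge between its endpoints, whereas $v_1v_5$ is an edge of $J_1^2$, so the coloring it returns may clash there. The split on $L(v_4)\cap L(v_5)$ is also not the relevant dichotomy, since $v_4v_5\in E(J_1^2)$; the paper instead splits on $L(v_1)\cap L(v_4)$, the unique non-adjacent pair of $J_1^2$, coloring $v_1$ and $v_4$ alike when their lists meet and otherwise exploiting $|L(v_1)\cup L(v_4)|=5>|L(v_3)|$. A short correct repair in the spirit of your fallback does exist: first color $v_5$ with some $c_5\in L(v_5)\setminus L(v_1)$ (possible since $|L(v_5)|>|L(v_1)|$), which leaves residual lists of sizes at least $2,2,3,2$ on $v_1,v_2,v_3,v_4$, and then apply Lemma~\ref{lem-key-K4-edge} to the path $v_4v_3v_2v_1$, whose square is exactly the subgraph of $J_1^2$ induced on $\{v_1,v_2,v_3,v_4\}$. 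But that is not what you wrote, and the Combinatorial Nullstellensatz backstop is only named, not carried out, so as it stands the proof is incomplete.
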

\begin{proof}
We consider two cases.

\medskip
\noindent {\bf Case 1}: $L(v_1) \cap L(v_4) \neq \emptyset$

Color $v_1$ and $v_4$ with a color  $\alpha \in L(v_1) \cap L(v_4)$, and then greedily color $v_5, v_2, v_3$ in this order to obtain an $L$-coloring of $J_1^2$.

\medskip
\noindent {\bf Case 2}: $L(v_1) \cap L(v_4) = \emptyset$

In this case, we have that $|L(v_1) \cup L(v_4)| = 5$.
If there is a color $\alpha \in L(v_1) \setminus L(v_3)$, color $v_1$ with $\alpha$ first,  and then color $v_2, v_5, v_4, v_3$ in this order to obtain an $L$-coloring of $J_1^2$.

Now assume that $L(v_1) \subset L(v_3)$.  Then $L(v_4)\setminus L(v_3) \not  = \emptyset$ since $|L(v_1) \cup L(v_4)| = 5$.  Let $\beta  \in L(v_4)\setminus L(v_3)$. Then $\beta \not \in L(v_1)$ and either $\beta \in L(v_5)$, or $\beta \not \in L(v_5)$.

If $\beta \not \in L(v_5)$, color $v_4$ with $\beta$ first,  and then color  $v_1, v_2,v_5,v_3$ in this order  to obtain an $L$-coloring of $J_1^2$.

If $\beta \in L(v_5)$, color $v_5$ with $\beta$ first,  and then color  $v_1, v_2,v_4,v_3$ in this order  to obtain an $L$-coloring of $J_1^2$.

In each case, we obtain an $L$-coloring of $J_1^2$. This completes the proof of the lemma.
\end{proof} 


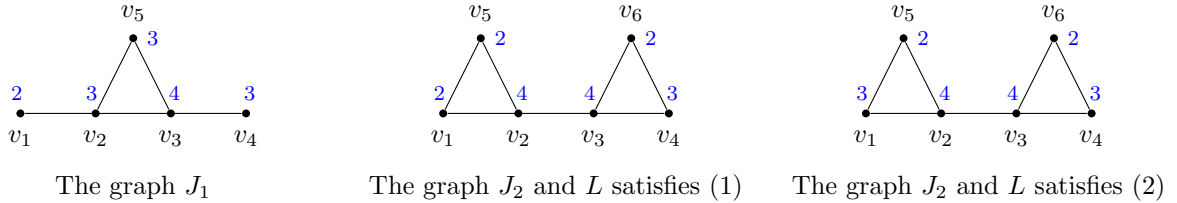
\begin{figure*}[htbp]
\begin{multicols}{3}\begin{center}
\begin{tikzpicture}
[u/.style={fill=black, minimum size =3pt,ellipse,inner sep=1pt},node distance=1.5cm,scale=1]
\node[u] (v1) at (1, 0){};
\node[u] (v2) at (2, 0){};
\node[u] (v3) at (3, 0){};
\node[u] (v4) at (4, 0){};
\node[u] (v5) at (2.5, 1){};

  \draw (v1) -- (v4);  
  \draw (v2) -- (v5);   
  \draw (v3) -- (v5);    

   \node[below=0.1cm, font=\small] at (v1) {$v_1$};  
   \node[below=0.1cm,font=\small] at (v2) {$v_2$};  
   \node[below=0.1cm, font=\small] at (v3) {$v_3$};     
   \node[below=0.1cm,font=\small] at (v4) {$v_4$};     
    \node[above=0.1cm,font=\small] at (v5) {$v_5$};

 \node[left=0.05cm, above=0.05cm,  font=\scriptsize] at (v1) {\textcolor{blue}{2}};
\node[left=0.05cm, above=0.05cm,font=\scriptsize] at (v2) {\textcolor{blue}{3}};  
\node[right=0.05cm, above=0.05cm,font=\scriptsize] at (v3) {\textcolor{blue}{4}};  
\node[right=0.05cm, above=0.05cm, font=\scriptsize] at (v4) {\textcolor{blue}{3}};  
\node[right=0.05cm, font=\scriptsize] at (v5) {\textcolor{blue}{3}}; 
 \end{tikzpicture}
         \vfill {\small The graph  $J_1$}  
\end{center} 
\begin{center}
\begin{tikzpicture}
[u/.style={fill=black, minimum size =3pt,ellipse,inner sep=1pt},node distance=1.5cm,scale=1]
\node[u] (v1) at (1, 0){};
\node[u] (v2) at (2, 0){};
\node[u] (v3) at (3, 0){};
\node[u] (v4) at (4, 0){};
\node[u] (v5) at (1.5, 1){};
\node[u] (v6) at (3.5, 1){};

  \draw (v1) -- (v4);  
  \draw (v1) -- (v5);
  \draw (v2) -- (v5);   
  \draw (v3) -- (v6);
  \draw (v4) -- (v6);

   \node[below=0.1cm, font=\small] at (v1) {$v_1$};  
   \node[below=0.1cm,font=\small] at (v2) {$v_2$};  
   \node[below=0.1cm, font=\small] at (v3) {$v_3$};     
   \node[below=0.1cm,font=\small] at (v4) {$v_4$};     
    \node[above=0.1cm,font=\small] at (v5) {$v_5$};  
     \node[above=0.1cm, font=\small] at (v6) {$v_6$};

 \node[left=0.05cm, above=0.05cm,  font=\scriptsize] at (v1) {\textcolor{blue}{2}};
\node[right=0.05cm, above=0.05cm,font=\scriptsize] at (v2) {\textcolor{blue}{4}};  
\node[left=0.05cm, above=0.05cm,font=\scriptsize] at (v3) {\textcolor{blue}{4}};  
\node[right=0.05cm, above=0.05cm, font=\scriptsize] at (v4) {\textcolor{blue}{3}};  
\node[right=0.05cm, font=\scriptsize] at (v5) {\textcolor{blue}{2}}; 
\node[right=0.05cm, font=\scriptsize] at (v6) {\textcolor{blue}{2}}; 
 \end{tikzpicture}
        \vfill {\small The graph  $J_2$ and $L$ satisfies (1)}  
\end{center} 
\par
\begin{center}
\begin{tikzpicture}
[u/.style={fill=black, minimum size =3pt,ellipse,inner sep=1pt},node distance=1.5cm,scale=1]
\node[u] (v1) at (1, 0){};
\node[u] (v2) at (2, 0){};
\node[u] (v3) at (3, 0){};
\node[u] (v4) at (4, 0){};
\node[u] (v5) at (1.5, 1){};
\node[u] (v6) at (3.5, 1){};

  \draw (v1) -- (v4);  
  \draw (v1) -- (v5);
  \draw (v2) -- (v5);   
  \draw (v3) -- (v6);
  \draw (v4) -- (v6);

   \node[below=0.1cm, font=\small] at (v1) {$v_1$};  
   \node[below=0.1cm,font=\small] at (v2) {$v_2$};  
   \node[below=0.1cm, font=\small] at (v3) {$v_3$};     
   \node[below=0.1cm,font=\small] at (v4) {$v_4$};     
    \node[above=0.1cm,font=\small] at (v5) {$v_5$};  
     \node[above=0.1cm, font=\small] at (v6) {$v_6$};

 \node[left=0.05cm, above=0.05cm,  font=\scriptsize] at (v1) {\textcolor{blue}{3}};
\node[right=0.05cm, above=0.05cm,font=\scriptsize] at (v2) {\textcolor{blue}{4}};  
\node[left=0.05cm, above=0.05cm,font=\scriptsize] at (v3) {\textcolor{blue}{4}};  
\node[right=0.05cm, above=0.05cm, font=\scriptsize] at (v4) {\textcolor{blue}{3}};  
\node[right=0.05cm, font=\scriptsize] at (v5) {\textcolor{blue}{2}}; 
\node[right=0.05cm, font=\scriptsize] at (v6) {\textcolor{blue}{2}}; 
 \end{tikzpicture}
        \vfill {\small The graph  $J_2$ and $L$ satisfies (2)} 
\end{center} 
\end{multicols} 
\caption{The graphs $J_1$ and $J_2$ with list sizes shown at vertices; the number at each vertex denotes the number of available colors. } 
\label{fig-J12}
\end{figure*}

\begin{lemma} \label{lem-J2}
Let $J_2$ be the graph  in Figure \ref{fig-J12}  and $L$ be a list assignment of $J_2$.  Then $J_2^2$ is  $L$-colorable if $L$ satisfies one of the following conditions:
\begin{enumerate}[(1)]
\item $|L(v_1)| = 2, |L(v_2)| = 4,  |L(v_3)| = 4, |L(v_4)| = 3, |L(v_{5})| = 2, |L(v_{6})| = 2$ and $L(v_1) \neq L(v_{5})$;

\item $|L(v_1)| = 3, |L(v_2)| = 4,  |L(v_3)| = 4, |L(v_4)| = 3, |L(v_{5})| = 2, |L(v_{6})| = 2$.
\end{enumerate}
\end{lemma}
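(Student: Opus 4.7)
The plan is to handle both conditions with one unified argument that starts by coloring $v_1$ using a color outside $L(v_5)$. First I would record the adjacencies in $J_2^2$: the square adds the edges $v_1v_2, v_1v_6, v_2v_5$ (already present), $v_3v_4, v_4v_5$, so
\[
v_1 \sim \{v_2,v_4,v_5,v_6\},\ v_2 \sim \{v_1,v_5\},\ v_3 \sim \{v_4,v_6\},\ v_4 \sim \{v_1,v_3,v_5,v_6\},\ v_5 \sim \{v_1,v_2,v_4\},\ v_6 \sim \{v_1,v_3,v_4\}.
\]
Crucially, $v_5$ and $v_6$ are at distance $3$ in $J_2$, hence non-adjacent in $J_2^2$, and both $v_2,v_3$ have degree $2$ in $J_2^2$ with list size $4$, so they will always be colorable greedily at the very end (at least $4-2=2$ free colors each).

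Next I would choose $\alpha \in L(v_1)\setminus L(v_5)$. This set is non-empty in case (1) because equal-sized distinct $2$-lists cannot be contained in each other, and in case (2) because $|L(v_1)|=3>2=|L(v_5)|$. Color $v_1$ with $\alpha$. Since $\alpha\notin L(v_5)$, the list of $v_5$ is untouched, and for the remaining vertices we get updated lists with $|L'(v_4)|\ge 2$, $|L'(v_5)|=2$, $|L'(v_6)|\ge 1$ (and $|L'(v_2)|\ge 3$, $|L'(v_3)|=4$). The problem reduces to list-coloring the triangle-minus-edge on $\{v_4,v_5,v_6\}$ (with $v_4$ the ``apex'' adjacent to both $v_5,v_6$, and $v_5\not\sim v_6$), after which $v_2,v_3$ are colored greedily.

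For $\{v_4,v_5,v_6\}$ I would split into cases exploiting $v_5\not\sim v_6$. If $L'(v_5)\cap L'(v_6)\neq\emptyset$, pick a common color $\gamma$ and set $v_5=v_6=\gamma$, so $v_4$ retains at least $|L'(v_4)|-1\ge 1$ choices. Otherwise $L'(v_5)$ and $L'(v_6)$ are disjoint, giving $|L'(v_5)\cup L'(v_6)|\ge 3$. If $|L'(v_4)|\ge 3$, color $v_5,v_6$ arbitrarily and $v_4$ still has a free color. The only delicate subcase is $|L'(v_4)|=2$, say $L'(v_4)=\{a,b\}$: then the bound $|L'(v_5)\cup L'(v_6)|\ge 3 > |L'(v_4)|$ yields a color $\gamma\in (L'(v_5)\cup L'(v_6))\setminus\{a,b\}$; assigning $\gamma$ to whichever of $v_5,v_6$ contains it (and any legal color from the other) leaves $v_4$ with at least one color from $\{a,b\}$. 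This tight subcase is the main (and only real) obstacle, and the disjointness-plus-counting argument resolves it cleanly.

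Finally, coloring $v_2$ (avoiding the colors on $v_1,v_5$) and $v_3$ (avoiding the colors on $v_4,v_6$) completes the $L$-coloring of $J_2^2$, proving both (1) and (2) simultaneously.
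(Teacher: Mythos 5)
Your proof is built on an incorrect reading of the graph $J_2$ in Figure \ref{fig-J12}. In $J_2$ the bottom four vertices form the path $v_1v_2v_3v_4$ (the single drawn segment from $v_1$ to $v_4$ passes through $v_2$ and $v_3$), with $v_5$ forming a triangle on the edge $v_1v_2$ and $v_6$ a triangle on the edge $v_3v_4$; this is confirmed by the later application, where the subgraph of $H_1$ induced by four consecutive vertices of the $10$-cycle together with $v_{11},v_{12}$ is identified with $J_2$. Consequently your adjacency list for $J_2^2$ is wrong: in fact $d(v_1,v_4)=d(v_1,v_6)=d(v_4,v_5)=d(v_5,v_6)=3$, so none of $v_1v_4$, $v_1v_6$, $v_4v_5$, $v_5v_6$ is an edge of $J_2^2$, whereas $v_2$ and $v_3$ are each within distance $2$ of every other vertex and hence have degree $5$ in $J_2^2$, not degree $2$. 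Your central simplification --- that $v_2$ and $v_3$ can be saved for last and colored greedily because they have $4$ colors and only $2$ neighbours --- is therefore false: each has $5$ neighbours and only $4$ colors, and moreover $\{v_2,v_3,v_4,v_6\}$ is a clique in $J_2^2$, so these four vertices must receive four pairwise distinct colors. The reduction to the triangle-minus-an-edge on $\{v_4,v_5,v_6\}$ collapses for the same reason.

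The paper's proof reflects where the real difficulty lies: it first chooses distinct colors $c_1\in L(v_1)$ and $c_5\in L(v_5)$ (this is where the hypothesis $L(v_1)\neq L(v_5)$ is used) so that the residual lists on the clique $\{v_2,v_3,v_4,v_6\}$ satisfy $|L'(v_2)\cup L'(v_3)\cup L'(v_4)\cup L'(v_6)|\ge 4$ and $L'(v_3)\ne L'(v_6)$, and then invokes Hall's theorem to obtain a system of distinct representatives for these four lists. Nothing in your argument addresses this step, and your opening move of coloring only $v_1$ with a color outside $L(v_5)$ leaves $v_2$ and $v_3$ with lists of size $3$ against degree $4$ in the remaining graph, which cannot be finished greedily. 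The proof needs to be redone from the correct adjacency structure.
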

\begin{proof}
Note that if $L$ satisfies (2), then, since $|L(v_1)| = 3$, we may remove a color 
$c \in L(v_1)$ such that $L(v_1) \setminus \{c\} \neq L(v_5)$. 
By (1), this implies that $J_2^2$ is $L$-colorable. 
Therefore, it remains to show that if $L$ satisfies (1), then $J_2^2$ is $L$-colorable. 

We first show that there exist two distinct colors 
$c_1 \in L(v_1)$ and $c_5 \in L(v_5)$ such that, 
after coloring $v_1$ and $v_5$ with $c_1$ and $c_5$, respectively, 
the resulting list assignment $L'$ satisfies:
\begin{enumerate}[(i)]
\item $|L'(v_2)| \ge 2$, $|L'(v_3)| \ge 2$, $|L'(v_4)| \ge 3$, and $|L'(v_6)| \ge 2$;
\item $|L'(v_2) \cup L'(v_3) \cup L'(v_4) \cup L'(v_6)| \ge 4$; and
\item $L'(v_3) \ne L'(v_6)$.
\end{enumerate}

Note that, regardless of the choices of $c_1$ and $c_5$, Condition (i) is always satisfied.

Since $|L(v_3)| = 4 > |L(v_4)| = 3$, let $\alpha \in L(v_3) \setminus L(v_4)$. 

\medskip
\noindent
\textbf{Case 1:} $\alpha \notin L(v_6)$.  
We can choose $c_1$ and $c_5$ such that $\alpha \notin \{c_1, c_5\}$.  
Such a choice exists since $L(v_1) \ne L(v_5)$ and both lists have size $2$. 
Then $L'(v_3) \ne L'(v_6) = L(v_6)$, so Condition (iii) is satisfied.  
Moreover, since $\alpha \in L'(v_3)$ but $\alpha \notin L'(v_4) = L(v_4)$ and $|L'(v_4)| = 3$, we have
\[
|L'(v_2) \cup L'(v_3) \cup L'(v_4) \cup L'(v_6)| 
\ge |L'(v_3) \cup L'(v_4)| \ge 4,
\]
so (ii) is satisfied. Hence, $c_1$ and $c_5$ are the desired colors.

\medskip
\noindent
\textbf{Case 2:} $\alpha \in L(v_6) = L'(v_6)$.  
Since $\alpha \notin L'(v_4)$ and $|L'(v_4)| = 3$, Condition (ii) is always satisfied.  
It remains to choose $c_1$ and $c_5$ so that $L'(v_3) \ne L'(v_6)$.

If $\alpha \in L(v_1) \cup L(v_5)$, choose $c_1$ and $c_5$ such that $\alpha \in \{c_1, c_5\}$.  
Then $\alpha \notin L'(v_3)$ and $\alpha \in L'(v_6)$, so $L'(v_3) \ne L'(v_6)$.

If $\alpha \notin L(v_1) \cup L(v_5)$, note that $L(v_1) \ne L(v_5)$ and $|L(v_1)| = |L(v_5)| = 2$, giving at least three possible pairs $(c_1, c_5)$.  
Among them, there exists a pair such that 
$L'(v_3) = L(v_3) \setminus \{c_1, c_5\} \ne L(v_6) = L'(v_6)$.

\medskip
Let $c_1 \in L(v_1)$ and $c_5 \in L(v_5)$ be two distinct colors satisfying (i)--(iii).  
Color $v_1$ and $v_5$ with $c_1$ and $c_5$, respectively.  
We now extend this coloring to $v_2, v_3, v_4$, and $v_6$ to obtain an $L$-coloring of $J_2^2$.

Construct an auxiliary bipartite graph $W$ with bipartition $(X,Y)$, where
\begin{itemize}
\item $X = \{v_2, v_3, v_4, v_6\}$ and $Y = \bigcup_{v_i \in X} L'(v_i)$;
\item For each $v_i \in X$ and each $\beta \in Y$, $v_i\beta \in E(W)$ if and only if $\beta \in L'(v_i)$.
\end{itemize}

By (ii), $|N_W(v_2) \cup N_W(v_3) \cup N_W(v_4) \cup N_W(v_6)| \ge 4$, and by (iii), $|N_W(v_3) \cup N_W(v_6)| \ge 3$.  
Together with (i), it follows that $|N_W(S)| \ge |S|$ for every $S \subseteq \{v_2, v_3, v_4, v_6\}$.  
Hence, by Hall's theorem, $W$ has a matching $M$ saturating all vertices of $X$.  
Let $v_2c_2$, $v_3c_3$, $v_4c_4$, and $v_6c_6$ be the edges of such a matching.  
Color $v_2, v_3, v_4, v_6$ with $c_2, c_3, c_4, c_6$, respectively, to obtain an $L$-coloring of $J_2^2$.  

This completes the proof of the lemma.
\end{proof}
\section{Proof of Theorem~ \ref{main-thm}}

In this section, we prove Theorem~\ref{main-thm} by contradiction.
Let G, together with a list assignment L of size 6, be a minimal counterexample to Theorem~\ref{main-thm}.
We also assume that G is a plane graph. Clearly, G is connected.
We begin with the following claim, whose proof is straightforward.

\begin{claim}
\label{CL:2-vertex}
\begin{enumerate}[(1)]
\item The minimum degree of $G$ satisfies $\delta(G) \ge 2$.

\item No $2$-vertex of $G$ is contained in a triangle.

\item No two $2$-vertices of $G$ are adjacent.
\end{enumerate}
\end{claim}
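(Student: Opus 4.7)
All three parts follow a common minimal-counterexample template: remove the offending vertex or vertices (call this set $S$), invoke the minimality of $(G, L)$ on the smaller graph $G - S$ to obtain an $L$-coloring of $(G - S)^2$, check that this coloring is still admissible on $V(G) \setminus S$ inside $G^2$, and extend greedily to $S$ by a short degree count in $G^2$. Because $G - S$ is still a subcubic planar graph with no $k$-cycles for $4 \le k \le 8$, minimality applies without any further hypothesis.

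The only non-mechanical step is the admissibility check. The subgraphs $(G - S)^2$ and the induced subgraph $G^2[V(G) \setminus S]$ can differ only on a pair $\{x, y\} \subseteq V(G) \setminus S$ joined by a length-$2$ path $x z y$ in $G$ with $z \in S$ but $xy \notin E(G - S)$. In (1), the vertex $z$ has degree at most $1$, so no such path exists. In (2), the only candidate path is $u v w$, and $uw \in E(G - v)$ by the triangle hypothesis. In (3), every length-$2$ path through $v$ or $v'$ has the other one as an endpoint, so $\{x, y\} \cap S \ne \emptyset$ and no new pair arises. Thus in every case the coloring obtained from minimality is admissible on $V(G) \setminus S$.

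Extension is then a degree count. In (1), a vertex $v$ of degree at most $1$ satisfies $|N_{G^2}(v)| \le 1 + 2 = 3 < 6$. In (2), because the two neighbors of $v$ are themselves adjacent, each contributes only one additional vertex to $N_{G^2}(v)$, giving $|N_{G^2}(v)| \le 2 + 1 + 1 = 4 < 6$. In (3), the $2$-valence of $v'$ bounds $|N_{G^2}(v)| \le 2 + 2 + 1 = 5$; once both $v$ and $v'$ are removed, $v$ has at most $4$ colored neighbors, so at least $2$ colors of $L(v)$ remain free, and after $v$ is colored $v'$ has at most $5$ colored neighbors and still at least $1$ free color. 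In every subcase the count stays strictly below $|L| = 6$, so the extension succeeds.

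The step most likely to need care is the part-(3) bookkeeping: one needs both the length-$2$-path check ensuring $G^2[V(G) \setminus \{v, v'\}] = (G - \{v, v'\})^2$ and the correct ordering of the two extensions (color $v$ first), so that the extra free color granted by $v'$ being initially uncolored is available exactly when it is needed. Once these two points are in place, the rest of the claim is a routine degree computation.
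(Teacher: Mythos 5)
Your proof is correct and follows essentially the same route as the paper: delete the offending vertex or pair, color the rest by minimality, and extend by counting forbidden colors ($\le 3$, $\le 4$, and $\le 4$ then $\le 5$ respectively). The only difference is that you make explicit the admissibility check that $(G-S)^2$ restricted to $V(G)\setminus S$ contains $G^2[V(G)\setminus S]$, which the paper leaves implicit; this is a welcome extra precision but not a different argument.
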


\begin{proof}
(1) Suppose to the contrary that $G$ contains a $1$-vertex $x$, and let $y$ be the neighbor of $x$. By the minimality of $G$, the square graph of  $G - x$ has an $L$-coloring $\phi$. Since there are at most three vertices of $G$ at distance at most $2$ from $x$, the coloring $\phi$ can always be extended to $x$, giving an $L$-coloring of $G^2$, a contradiction.

(2) Suppose to the contrary that $G$ has a $2$-vertex $x$ contained in a triangle $xyzx$.   By the minimality of $G$, the square graph  of $G-x$ has an $L$-coloring $\phi$. Since at most four colors are forbidden for $x$ and $|L(x)| = 6$, the coloring  $\phi$  can be extended to $x$, giving an $L$-coloring of $G^2$, a contradiction.

(3) Suppose to the contrary that $G$ contains two adjacent $2$-vertices $x$ and $y$. Let $x_1$ be the other neighbor of $x$, and let $y_1$ be the other neighbor of $y$. By (2), we have $x_1 \ne y_1$.   

By the minimality of $G$, the square graph  of $G - \{x, y\}$ has an $L$-coloring $\phi$.

For each $w \in \{x, y\}$, define
\[
L'(w) = L(w) \setminus \{\phi(z) : zw \in E(G^2) \text{ and } z \notin \{x, y\}\}.
\]
Since at most four colors are forbidden for each of $x$ and $y$, we have $|L'(x)| \ge 6 - 4 = 2$ and $|L'(y)| \ge 2$.  
Thus $\phi$ extends to both $x$ and $y$, a contradiction. This completes the proof of the claim.
\end{proof}
\begin{figure*}[htbp]
\begin{multicols}{3}
\begin{center}
\begin{tikzpicture}
[u/.style={fill=black, minimum size =3pt,ellipse,inner sep=1pt},
u-circle/.style={circle, draw, fill=none, minimum size =3pt,ellipse,inner sep=1pt},node distance=1.5cm,scale=0.8]

\node[u-circle] (v1) at (1, 0){};
\node[u] (v2) at (2, 0){};
\node[u] (v3) at (3, 0){};
\node[u-circle] (v4) at (4, 0){};
\node[u] (v5) at (2.5, 1){};

  \draw (0.5, 0) -- (v1);   
  \draw (v1) -- (v2);   
  \draw (v2) -- (v3);
  \draw (v3) -- (v4);   
  \draw (v4) -- (4.5, 0);        

  \draw (v2) -- (v5);   
  \draw (v3) -- (v5);

   \node[below=0.1cm, font=\small] at (v1) {$v_1$};  
   \node[below=0.1cm,font=\small] at (v2) {$v_2$};  
   \node[below=0.1cm, font=\small] at (v3) {$v_3$};     
   \node[below=0.1cm,font=\small] at (v4) {$v_4$};     
    \node[above=0.1cm,font=\small] at (v5) {$v_5$};  

 \end{tikzpicture}
        \vfill {\small The subgraph $T_1$}  
\end{center} 
\par
\begin{center}
\begin{tikzpicture}
[u/.style={fill=black, minimum size =3pt,ellipse,inner sep=1pt},
u-circle/.style={circle, draw, fill=none, minimum size =3pt,ellipse,inner sep=1pt},node distance=1.5cm,scale=0.8]

\node[u] (v1) at (1, 0){};
\node[u] (v2) at (2, 0){};
\node[u] (v3) at (3, 0){};
\node[u] (v4) at (4, 0){};
\node[u-circle] (v5) at (5, 0){};
\node[u] (v6) at (1.5, 1){};
\node[u] (v7) at (3.5, 1){};
 
  \draw (0.5, 0) -- (v1);   
  \draw (v1) -- (v2);   
  \draw (v2) -- (v3);
  \draw (v3) -- (v4);   
  \draw (v4) -- (v5);  
  \draw (v5) -- (5.5, 0);   
   
  \draw (v1) -- (v6);
  \draw (v2) -- (v6);    
  \draw (v3) -- (v7);
  \draw (v4) -- (v7);

   \node[below=0.1cm, font=\small] at (v1) {$v_1$};  
   \node[below=0.1cm,font=\small] at (v2) {$v_2$};  
   \node[below=0.1cm, font=\small] at (v3) {$v_3$};     
   \node[below=0.1cm,font=\small] at (v4) {$v_4$};     
      
    \node[below=0.1cm, font=\small] at (v5) {$v_5$};    
     \node[above=0.1cm, font=\small] at (v6) {$v_6$};         
     \node[above=0.1cm, font=\small] at (v7) {$v_7$};                
    
 \end{tikzpicture}
        \vfill {\small The subgraph  $T_2$} 
\end{center}
\par
\begin{center}
\begin{tikzpicture}
[u/.style={fill=black, minimum size =3pt,ellipse,inner sep=1pt},
u-circle/.style={circle, draw, fill=none, minimum size =3pt,ellipse,inner sep=1pt},node distance=1.5cm,scale=0.8]

\node[u] (v1) at (1, 0){};
\node[u] (v2) at (2, 0){};
\node[u-circle] (v3) at (3, 0){};
\node[u] (v4) at (4, 0){};
\node[u-circle] (v5) at (5, 0){};
\node[u] (v6) at (1.5, 1){};

  \draw (v1) -- (v2);   
  \draw (v2) -- (v3);
  \draw (v3) -- (v4);   
  \draw (v4) -- (v5);  
  \draw (v5) -- (5.5, 0);   
  
  \draw (v1) -- (v6);
  \draw (v2) -- (v6);    
  \draw (v4) -- (4, 0.5);

   \node[below=0.1cm, font=\small] at (v1) {$v_1$};  
   \node[below=0.1cm,font=\small] at (v2) {$v_2$};  
   \node[below=0.1cm, font=\small] at (v3) {$v_3$};     
   \node[below=0.1cm,font=\small] at (v4) {$v_4$};     
      
    \node[below=0.1cm, font=\small] at (v5) {$v_5$};    
     \node[above=0.1cm, font=\small] at (v6) {$v_6$};                  
 \end{tikzpicture}
        \vfill {\small The subgraph  $T_3$}  
\end{center} 
\end{multicols} 
\caption{Reducible configurations $T_1$, $T_2$, and $T_3$. A black vertex denotes a 3-vertex and a white vertex denotes a 2-vertex.} 
\label{T123-subgraph}
\end{figure*}
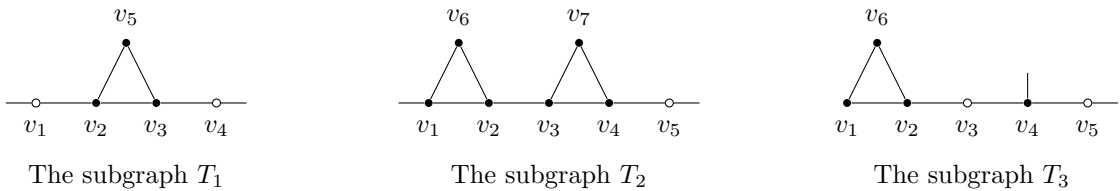

One of the main steps is to estimate the number of $2$-vertices and $3$-faces adjacent to any face of length at least $9$.  
We begin by proving the following claim.

\begin{claim}\label{cl:reducible}
For each $i \in \{1,2,3\}$, the graph $G$ does not contain the subgraph $T_i$ illustrated in Figure~\ref{T123-subgraph}.
\end{claim}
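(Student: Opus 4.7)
My plan is to prove each $T_i$ reducible by a common template: assume $G$ contains $T_i$, delete a carefully chosen subset $S_i \subseteq V(T_i)$, let $G' := G - S_i$ (still subcubic, planar, and without $k$-cycles for $4 \le k \le 8$, as these properties pass to subgraphs), apply the minimality of $G$ to obtain an $L$-coloring $\phi$ of $(G')^2$, and extend $\phi$ to $S_i$ via one of Lemmas~\ref{lem-key-K4-edge}--\ref{lem-J2} to contradict the minimality of $G$. In each case I will first check that every vertex of $S_i$ has at most one neighbor in $V(G')$, so that no $G^2$-adjacency among $V(G')$ passes through $S_i$ and hence $\phi$ is a valid partial $L$-coloring of $G^2$.

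For $T_1$ and $T_3$ the reductions are essentially the same. For $T_1$ take $S_1 := \{v_1, v_2, v_3, v_4\}$; using Claim~\ref{CL:2-vertex} and the degree bounds, the residual lists $L'(v) := L(v) \setminus \phi(N_{G^2}(v) \cap V(G'))$ satisfy $|L'(v_1)|, |L'(v_4)| \ge 2$ and $|L'(v_2)|, |L'(v_3)| \ge 3$, and the induced $G^2$-subgraph on $S_1$ is $P_4^2$ along the path $v_1 v_2 v_3 v_4$. After arbitrarily shrinking $L'(v_3)$ to a $2$-subset, Lemma~\ref{lem-key-K4-edge} yields the extension. For $T_3$ take $S_3 := \{v_2, v_3, v_4, v_5\}$: the analogous count gives sizes $(|L'(v_2)|, |L'(v_3)|, |L'(v_4)|, |L'(v_5)|) \ge (2, 3, 2, 2)$ and the induced $G^2$-subgraph is the square of the path $v_2 v_3 v_4 v_5$, so Lemma~\ref{lem-key-K4-edge} applies directly.

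The case $T_2$ is the main obstacle. A small deletion such as $S_2 = \{v_3, v_4, v_5, v_7\}$ leaves $K_4$ in $G^2[S_2]$ with residual list sizes $(2, 3, 3, 2)$, which in general is not $L'$-colorable. My plan is therefore to take $S_2 := V(T_2)$: each $v_i \in S_2$ then has at most one neighbor in $V(G')$ (one of the externals $u_1, u_5, u_6, u_7$), so the deletion preserves all $G^2$-adjacencies among $V(G')$. A careful count produces residual sizes
\[
|L'(v_1)|,\, |L'(v_6)| \ge 2,\quad |L'(v_5)|,\, |L'(v_7)| \ge 3,\quad |L'(v_2)|,\, |L'(v_4)| \ge 4,\quad |L'(v_3)| \ge 5.
\]
To extend $\phi$ to $V(T_2)$, I plan a three-stage coloring: first color $v_3$ (the unique universal vertex of $G^2[V(T_2)]$) so as to arrange favorable configurations on the reduced lists of its neighbors; then color the triangle $\{v_1, v_2, v_6\}$ using the slack $|L'(v_2)| \ge 4$ via an overlap argument in the spirit of the proof of Lemma~\ref{lem-key-K4-edge}; and finally color the remaining three vertices $\{v_4, v_5, v_7\}$ by a Hall-type bipartite matching argument as in the proof of Lemma~\ref{lem-J2}. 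The delicate point I expect to have to handle is Hall's condition in the final stage, which could fail if the residual lists on $\{v_4, v_5, v_7\}$ collapse into a small common palette; I intend to avoid this by carefully choosing the color of $v_3$ at the outset, exploiting the generous list size $|L'(v_3)| \ge 5$ to dodge any such bad configuration.
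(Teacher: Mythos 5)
Your reductions for $T_1$ and $T_3$ are sound: in each case every deleted vertex has at most one neighbor in $G'$, the residual list sizes you state are correct, and the induced subgraph of $G^2$ on the deleted set is $P_4^2$ with lists dominating $(2,3,2,2)$, so Lemma~\ref{lem-key-K4-edge} applies. (For $T_1$ the paper instead deletes all of $V(T_1)$, including the triangle apex $v_5$, and invokes Lemma~\ref{lem-key-J1} with residual lists $(3,4,4,3,3)$; your four-vertex deletion is a legitimate alternative.)

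The $T_2$ case, however, is not a proof but an unexecuted plan, and in the order you propose it genuinely breaks. Suppose you color $v_3$ first and then the $G^2$-triangle $\{v_1,v_2,v_6\}$. The remaining set $\{v_4,v_5,v_7\}$ also induces a triangle in $G^2$ (note $v_5\sim v_7$ via $v_4$), and since each of $v_4$ and $v_7$ is a $G^2$-neighbor of both $v_2$ and $v_3$ while $v_5$ sees only $v_3$, its residual list sizes can drop to $4-2=2$, $3-1=2$, and $3-2=1$. Hall's condition for a triangle with lists of sizes $2,2,1$ fails whenever, say, the lists of $v_4$ and $v_5$ are both $\{a,b\}$ and that of $v_7$ is $\{a\}$, and it is not clear that one choice of color for $v_3$ can simultaneously prevent every such collapse and keep $\{v_1,v_2,v_6\}$ colorable; at minimum this requires a case analysis you have not supplied. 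There are two clean repairs. The paper's: delete only $\{v_2,v_3,v_4,v_5,v_7\}$, keeping $v_1$ and $v_6$ colored by $\phi$; these five vertices induce exactly $J_1^2$ in $G^2$ (with $v_2,v_3,v_4,v_5,v_7$ in the roles of $J_1$'s $v_1,\dots,v_5$) with residual lists $(2,3,4,3,3)$, and Lemma~\ref{lem-key-J1} finishes at once. Alternatively, within your full-deletion setup, reverse the order: first assign $v_1$ and $v_6$ distinct colors from their $2$-lists; this removes at most two colors from each of $v_2$ and $v_3$ and none from $v_4,v_5,v_7$, again leaving the $J_1$ configuration with lists $(2,3,4,3,3)$ on $\{v_2,v_3,v_4,v_5,v_7\}$. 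Either way the three-stage ad hoc argument is unnecessary.
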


\begin{proof}
We prove that none of $T_1$, $T_2$, or $T_3$ appears in $G$, each by contradiction.  
Recall that by Claim~\ref{CL:2-vertex}, every vertex in a triangle is a $3$-vertex.

\medskip\noindent
(a)~Suppose that $G$ contains $T_1$ as a subgraph.  
By the minimality of $G$, the square graph of  $G - V(T_1)$ has an $L$-coloring $\phi$.  
For each $v_i \in V(T_1)$, define
\[
L'(v_i) = L(v_i) \setminus \{\phi(x) : v_i x \in E(G^2) \text{ and } x \notin V(T_1)\}.
\]
Then, as shown in Figure~\ref{T123-color}(i),
\[
|L'(v_i)| \ge
\begin{cases}
3, & i \in \{1,4,5\},\\[2pt]
4, & i \in \{2,3\}.
\end{cases}
\]
By Lemma~\ref{lem-key-J1}, the coloring $\phi$ extends to an $L$-coloring of $G^2$, 
contradicting the choice of $G$.

\medskip\noindent
(b)~Suppose that $G$ contains $T_2$ as a subgraph.  
By the minimality of $G$, let $\phi$ be an $L$-coloring of  the square graph of $G - \{v_2,v_3,v_4,v_5,v_7\}$.  
For each $i \in \{2,3,4,5,7\}$, define
\[
L'(v_i) = L(v_i) \setminus \{\phi(x) : v_i x \in E(G^2) \text{ and } x \notin \{v_2,v_3,v_4,v_5,v_7\}\}.
\]
Then, as shown in Figure~\ref{T123-color}(ii),
\[
|L'(v_2)| \ge 2,\quad
|L'(v_3)| \ge 3,\quad
|L'(v_4)| \ge 4,\quad
|L'(v_5)| \ge 3,\quad
|L'(v_7)| \ge 3.
\]
By Lemma~\ref{lem-key-J1}, the coloring $\phi$ extends to an $L$-coloring of $G^2$, 
again a contradiction.

\medskip\noindent
(c)~Suppose that $G$ contains $T_3$ as a subgraph.  
Let $\phi$ be an $L$-coloring  of the square graph of  $G - \{v_2,v_3,v_4,v_5\}$, and for each $i \in \{2,3,4,5\}$, define
\[
L'(v_i) = L(v_i) \setminus \{\phi(x) : v_i x \in E(G^2) \text{ and } x \notin \{v_2,v_3,v_4,v_5\}\}.
\]
Then, as shown in Figure~\ref{T123-color}(iii),
\[
|L'(v_2)| \ge 2,\qquad
|L'(v_3)| \ge 3,\qquad
|L'(v_4)| \ge 2,\qquad
|L'(v_5)| \ge 2.
\]
By Lemma~\ref{lem-key-K4-edge}, the coloring $\phi$ extends to an $L$-coloring of $G^2$, 
a contradiction.

\medskip
In all three cases we obtain a contradiction.  
Thus $G$ contains none of $T_1$, $T_2$, or $T_3$ as a subgraph.
\end{proof}

\begin{figure*}[htbp]
\begin{multicols}{3}
\begin{center}
\begin{tikzpicture}
[u/.style={fill=black, minimum size =3pt,ellipse,inner sep=1pt},
u-circle/.style={circle, draw, fill=none, minimum size =3pt,ellipse,inner sep=1pt},node distance=1.5cm,scale=0.8]

\node[u-circle] (v1) at (1, 0){};
\node[u] (v2) at (2, 0){};
\node[u] (v3) at (3, 0){};
\node[u-circle] (v4) at (4, 0){};
\node[u] (v5) at (2.5, 1){};

  \draw (0.5, 0) -- (v1);   
  \draw (v1) -- (v2);   
  \draw (v2) -- (v3);
  \draw (v3) -- (v4);   
  \draw (v4) -- (4.5, 0);        
  \draw (v2) -- (v5);   
  \draw (v3) -- (v5);    
    
   \node[below=0.1cm, font=\small] at (v1) {$v_1$};  
   \node[below=0.1cm,font=\small] at (v2) {$v_2$};  
   \node[below=0.1cm, font=\small] at (v3) {$v_3$};     
   \node[below=0.1cm,font=\small] at (v4) {$v_4$};     
    \node[above=0.1cm,font=\small] at (v5) {$v_5$};          
    
 \node[above=0.05cm,  font=\scriptsize] at (v1) {\textcolor{blue}{3}};
\node[left=0.05cm, above=0.05cm,font=\scriptsize] at (v2) {\textcolor{blue}{4}};  
\node[right=0.05cm, above=0.05cm,font=\scriptsize] at (v3) {\textcolor{blue}{4}};  
\node[above=0.05cm, font=\scriptsize] at (v4) {\textcolor{blue}{3}};  
\node[right=0.05cm, font=\scriptsize] at (v5) {\textcolor{blue}{3}}; 
 \end{tikzpicture}
        \vfill {(i) List sizes of $T_1$}  
\end{center} 
\par
\begin{center}
\begin{tikzpicture}
[u/.style={fill=black, minimum size =3pt,ellipse,inner sep=1pt},
u-circle/.style={circle, draw, fill=none, minimum size =3pt,ellipse,inner sep=1pt},node distance=1.5cm,scale=0.8]
\node[u] (v1) at (1, 0){};
\node[u] (v2) at (2, 0){};
\node[u] (v3) at (3, 0){};
\node[u] (v4) at (4, 0){};
\node[u-circle] (v5) at (5, 0){};
\node[u] (v6) at (1.5, 1){};
\node[u] (v7) at (3.5, 1){};

  \draw (0.5, 0) -- (v1);   
  \draw (v1) -- (v2);   
  \draw (v2) -- (v3);
  \draw (v3) -- (v4);   
  \draw (v4) -- (v5);  
  \draw (v5) -- (5.5, 0);   
   
  \draw (v1) -- (v6);
  \draw (v2) -- (v6);    
  \draw (v3) -- (v7);
  \draw (v4) -- (v7);

   \node[below=0.1cm, font=\small] at (v1) {$v_1$};  
   \node[below=0.1cm,font=\small] at (v2) {$v_2$};  
   \node[below=0.1cm, font=\small] at (v3) {$v_3$};     
   \node[below=0.1cm,font=\small] at (v4) {$v_4$};     
      
    \node[below=0.1cm, font=\small] at (v5) {$v_5$};    
     \node[above=0.1cm, font=\small] at (v6) {$v_6$};         
     \node[above=0.1cm, font=\small] at (v7) {$v_7$};                
    
 \node[left=0.05cm, above=0.05cm,  font=\scriptsize] at (v1) {};
\node[right=0.05cm, above=0.05cm,font=\scriptsize] at (v2) {\textcolor{blue}{2}};  
\node[left=0.05cm, above=0.05cm,font=\scriptsize] at (v3) {\textcolor{blue}{3}};  
\node[right=0.05cm, above=0.05cm, font=\scriptsize] at (v4) {\textcolor{blue}{4}}; 
\node[above=0.05cm, font=\scriptsize] at (v5) {\textcolor{blue}{3}};  
\node[right=0.1cm, font=\scriptsize] at (v6) {}; 
\node[right=0.1cm, font=\scriptsize] at (v7) {\textcolor{blue}{3}}; 
 \end{tikzpicture}
        \vfill {(ii) List sizes of $T_2$} 
\end{center}
\par
\begin{center}
\begin{tikzpicture}
[u/.style={fill=black, minimum size =3pt,ellipse,inner sep=1pt},
u-circle/.style={circle, draw, fill=none, minimum size =3pt,ellipse,inner sep=1pt},node distance=1.5cm,scale=0.8]
\node[u] (v1) at (1, 0){};
\node[u] (v2) at (2, 0){};
\node[u-circle] (v3) at (3, 0){};
\node[u] (v4) at (4, 0){};
\node[u-circle] (v5) at (5, 0){};
\node[u] (v6) at (1.5, 1){};

  \draw (v1) -- (v2);   
  \draw (v2) -- (v3);
  \draw (v3) -- (v4);   
  \draw (v4) -- (v5);  
  \draw (v5) -- (5.5, 0);   
  
  \draw (v1) -- (v6);
  \draw (v2) -- (v6);    
  \draw (v4) -- (4, 0.5);

   \node[below=0.1cm, font=\small] at (v1) {$v_1$};  
   \node[below=0.1cm,font=\small] at (v2) {$v_2$};  
   \node[below=0.1cm, font=\small] at (v3) {$v_3$};     
   \node[below=0.1cm,font=\small] at (v4) {$v_4$};     
      
    \node[below=0.1cm, font=\small] at (v5) {$v_5$};    
     \node[above=0.1cm, font=\small] at (v6) {$v_6$};                  
    
 \node[left=0.05cm, above=0.05cm,  font=\scriptsize] at (v1) {};
\node[right=0.05cm, above=0.05cm,font=\scriptsize] at (v2) {\textcolor{blue}{2}};  
\node[above=0.05cm,font=\scriptsize] at (v3) {\textcolor{blue}{3}};  
\node[right=0.1cm, above=0.05cm, font=\scriptsize] at (v4) {\textcolor{blue}{2}}; 
\node[above=0.05cm, font=\scriptsize] at (v5) {\textcolor{blue}{2}};  
\node[right=0.1cm, font=\scriptsize] at (v6) {}; 
 \end{tikzpicture}
        \vfill {(iii) List sizes of $T_3$}  
\end{center} 
\end{multicols} 
\caption{
The number at  each vertex denotes the number of available colors.} 
\label{T123-color}
\end{figure*}
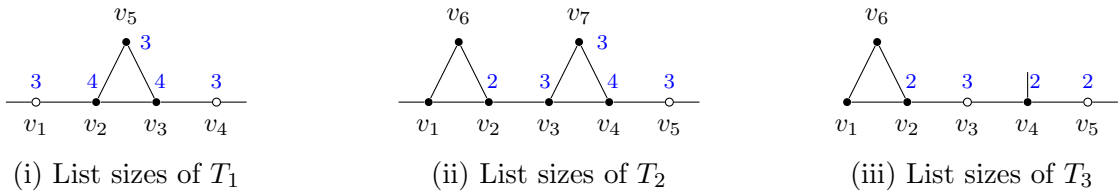

Claim~\ref{cl:reducible} yields several structural constraints on a long cycle, which will allow us to estimate the total number of $2$-vertices and triangles adjacent to it.  
To describe these properties precisely, we introduce the following notation.

For a cycle $C$ in $G$, let $d(C)$ denote its length.  
A \emph{$k$-cycle} is a cycle of length $k$, and a \emph{$k^+$-cycle} is a cycle of length at least $k$.

\begin{definition}
Let $C$ be a $9^+$-cycle of $G$.

\smallskip
(a) Let $t(C)$ denote the sum of the number of $2$-vertices on $C$  and the number of  triangles sharing an edge with $C$.

\smallskip
(b) Let $A(C) = \{z_1,\dots, z_k\}$ be the set of $3$-vertices adjacent to $C$, listed in cyclic order around $C$, that are not adjacent to any triangle.  
For each $i$, let $M_i$ be the segment of $C$ from $z_i$ to $z_{i+1}$, with indices taken modulo $k$, and let $t(M_i)$ denote the number of $2$-vertices and triangles sharing an edge with $M_i$.
\end{definition}

As an example, consider the cycle in $H_3$ in Figure~\ref{H234}.  
We may label $v_6 = z_1$, $v_8 = z_2$, and $v_{10} = z_3$.  
Then we obtain:

\begin{itemize}
\item $t(H_3) = 5$;
\item $A(C) = \{z_1, z_2, z_3\}$, so $k = 3$;
\item $M_1 = v_6v_7v_8$, \; $M_2 = v_8v_9v_{10}$, \; and  
$M_3 = v_{10}v_1v_2v_3v_4v_5v_6$;
\item $t(M_1) = t(M_2) = 1$ and $t(M_3) = 3$.
\end{itemize}

With this observation, the next two claims follow directly from 
Claim~\ref{cl:reducible} and the definitions.

\begin{claim} \label{segment-S}
Let $C$ be a $9^+$-cycle and suppose that 
$A(C)=\{z_1,z_2,\dots,z_k\}\ne\emptyset$.  
Then each segment $M_i$ is isomorphic to one of the configurations 
$S_1$, $S_2$, $S_3$, $S_4$, or $S_5$ shown in  
Figure~\ref{S45}.
\end{claim}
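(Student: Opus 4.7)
The plan is to classify $M_i$ by analyzing the possible arrangements of $2$-vertices and triangles along the segment, using the structural constraints already established.

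First I would identify the types of internal vertices of $M_i$ (those strictly between $z_i$ and $z_{i+1}$). By Claim~\ref{CL:2-vertex}(1), $\delta(G) \ge 2$, so no internal vertex has degree $1$. Any $3$-vertex on $C$ that does not belong to a triangle sharing an edge with $C$ would itself lie in $A(C)$, contradicting the hypothesis that $z_i, z_{i+1}$ are consecutive in $A(C)$. Hence each internal vertex is either (i) a $2$-vertex on $C$, or (ii) one of the two $3$-vertices of a pair of consecutive $C$-vertices sharing a common off-$C$ neighbor, i.e.\ the two base vertices of a triangle incident to $C$. I will call such a pair of adjacent $3$-vertices a \emph{T-block}. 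By Claim~\ref{CL:2-vertex}(2), the two types above are disjoint.

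Next I would encode $M_i$ as a word over the alphabet $\{\mathsf{2}, \mathsf{T}\}$, with $\mathsf{2}$ denoting a single $2$-vertex and $\mathsf{T}$ denoting a T-block (occupying two consecutive $C$-vertices). The structural restrictions now translate into forbidden substrings: Claim~\ref{CL:2-vertex}(3) forbids $\mathsf{22}$; the absence of $T_1$ in $G$ forbids $\mathsf{2T2}$; and the absence of $T_2$ forbids $\mathsf{TT2}$ together with its reflection $\mathsf{2TT}$. The absence of $T_3$ imposes a condition that straddles two adjacent segments (ruling out a segment ending in $\mathsf{T2}$ followed by a segment beginning with $\mathsf{2}$) but places no additional restriction on the interior of a single $M_i$.

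A case analysis on the words over $\{\mathsf{2}, \mathsf{T}\}$ avoiding the above substrings, combined with the boundary condition that both $z_i$ and $z_{i+1}$ are $3$-vertices outside every triangle, matches each resulting word with one of the five configurations $S_1, \dots, S_5$ shown in Figure~\ref{S45}. The main obstacle is making this enumeration exhaustive: one must methodically verify that no admissible arrangement of $2$-vertices and T-blocks has been overlooked, and confirm that every longer candidate word contains one of the forbidden substrings. Because the forbidden substrings block continuations on both sides of each $\mathsf{T}$ and $\mathsf{2}$, the list of admissible words is bounded, and the case analysis is finite and tractable.
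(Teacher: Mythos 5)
Your proposal is correct and is essentially the argument the paper intends: the paper offers no written proof, asserting only that the claim ``follows directly from Claim~\ref{cl:reducible} and the definitions,'' and your forbidden-substring enumeration (words over $\{\mathsf{2},\mathsf{T}\}$ avoiding $\mathsf{22}$, $\mathsf{2T2}$, $\mathsf{TT2}$, $\mathsf{2TT}$, whose only survivors are $\varepsilon$, $\mathsf{2}$, $\mathsf{2T}$, $\mathsf{T2T}$, and $\mathsf{T}^{l}$, i.e.\ $S_5,S_1,S_2,S_3,S_4$) is exactly that direct verification made explicit. You also correctly observe that $T_3$ constrains consecutive segments rather than a single $M_i$, which matches how the paper uses it later.
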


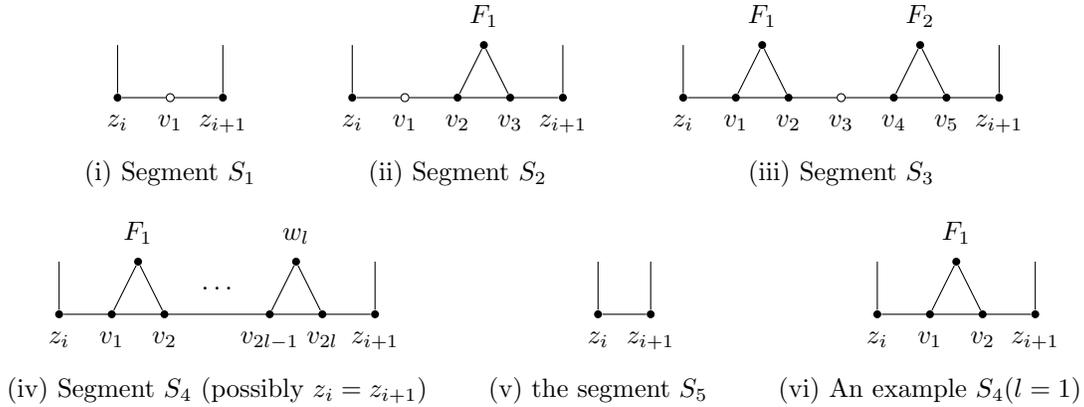
\begin{figure*}[htbp]
\begin{center}
\subfigure{
\begin{tikzpicture}
[u/.style={fill=black, minimum size =3pt,ellipse,inner sep=1pt},
u-circle/.style={circle, draw, fill=none, minimum size =3pt,ellipse,inner sep=1pt},node distance=1.5cm,scale=0.7]
\node[u] (v0) at (0, 0){};
\node[u-circle] (v1) at (1, 0){};
\node[u] (v2) at (2, 0){};
  \draw (v0) -- (v1);  
  \draw (v1) -- (v2);    
  \draw (v0) -- (0, 1);
  \draw (v2) -- (2, 1);    
   \node[below=0.1cm, font=\small] at (v0) {$z_i$};  
    \node[below=0.1cm, font=\small] at (v1) {$v_1$};   
   \node[below=0.1cm,font=\small] at (v2) {$z_{i+1}$};  
   \node[left=0.7cm,below=0.7cm] at (v2) {\small (i) Segment $S_1$};              
 \end{tikzpicture}}
  \hspace{0.5cm}
\subfigure{
\begin{tikzpicture}
[u/.style={fill=black, minimum size =3pt,ellipse,inner sep=1pt},
u-circle/.style={circle, draw, fill=none, minimum size =3pt,ellipse,inner sep=1pt},node distance=1.5cm,scale=0.7]
\node[u] (v0) at (0, 0){};
\node[u-circle] (v1) at (1, 0){};
\node[u] (v2) at (2, 0){};
\node[u] (v3) at (3, 0){};
\node[u] (v4) at (4, 0){};
\node[u] (w1) at (2.5, 1){};

  \draw (v0) -- (v1);  
  \draw (v1) -- (v4);    
  \draw (v2) -- (w1);
  \draw (v3) -- (w1);                
  \draw (v0) -- (0, 1);     
   \draw (v4) -- (4, 1);          
   
   \node[below=0.1cm, font=\small] at (v1) {$v_1$};  
   \node[below=0.1cm,font=\small] at (v2) {$v_2$};       
   \node[below=0.1cm,font=\small] at (v3) {$v_3$}; 
      \node[below=0.1cm,font=\small] at (v4) {$z_{i+1}$}; 
      \node[below=0.1cm, font=\small] at (v0) {$z_i$};        
      
    \node[above=0.1cm, font=\small] at (w1) {$F_1$};    
   \node[left=0.7cm,below=0.7cm] at (v3) {\small (ii) Segment $S_2$};        
 \end{tikzpicture}}
 \hspace{0.5cm}
\subfigure{
\begin{tikzpicture}
[u/.style={fill=black, minimum size =3pt,ellipse,inner sep=1pt},
u-circle/.style={circle, draw, fill=none, minimum size =3pt,ellipse,inner sep=1pt},node distance=1.5cm,scale=0.7]
\node[u] (v0) at (0, 0){};
\node[u] (v1) at (1, 0){};
\node[u] (v2) at (2, 0){};
\node[u-circle] (v3) at (3, 0){};
\node[u] (v4) at (4, 0){};
\node[u] (v5) at (5, 0){};
\node[u] (v6) at (6, 0){};
\node[u] (w1) at (1.5, 1){};
\node[u] (w2) at (4.5, 1){};

  \draw (v0) -- (v3);  
  \draw (v3) -- (v6);   
  \draw (v1) -- (w1);
  \draw (v2) -- (w1);    
  \draw (v4) -- (w2);
  \draw (v5) -- (w2);            
  \draw (v0) -- (0, 1);     
   \draw (v6) -- (6, 1);     
    
   \node[below=0.1cm, font=\small] at (v1) {$v_1$};  
   \node[below=0.1cm,font=\small] at (v2) {$v_2$};  
   \node[below=0.1cm, font=\small] at (v3) {$v_3$};     
   \node[below=0.1cm,font=\small] at (v4) {$v_4$};     
   \node[below=0.1cm, font=\small] at (v5) {$v_5$};     
   \node[below=0.1cm,font=\small] at (v6) {$z_{i+1}$}; 
      \node[below=0.1cm, font=\small] at (v0) {$z_i$};        
      
    \node[above=0.1cm, font=\small] at (w1) {$F_1$};    
     \node[above=0.1cm, font=\small] at (w2) {$F_2$};  
   \node[left=0.7cm,below=0.7cm] at (v4) {\small (iii) Segment $S_3$};               
 \end{tikzpicture}}
\subfigure{\begin{tikzpicture}
[u/.style={fill=black, minimum size =3pt,ellipse,inner sep=1pt},
p/.style={fill=none, minimum size =3pt,ellipse,inner sep=1pt},
node distance=1.5cm,scale=0.7];
\node[u] (v0) at (0, 0){};
\node[u] (v1) at (1, 0){};
\node[u] (v2) at (2, 0){};
\node[u] (v4) at (4, 0){};
\node[u] (v5) at (5, 0){};
\node[u] (v6) at (6, 0){};
\node[u] (w1) at (1.5, 1){};
\node[u] (w2) at (4.5, 1){};
\node[p] (A1) at (3, 0.5){};

  \draw (v0) -- (v6);  
  \draw (v1) -- (w1);
  \draw (v2) -- (w1);    
  \draw (v4) -- (w2);
  \draw (v5) -- (w2);            
  \draw (v0) -- (0, 1);     
   \draw (v6) -- (6, 1);          
   
   \node[below=0.1cm, font=\small] at (v1) {$v_1$};  
   \node[below=0.1cm,font=\small] at (v2) {$v_2$};  
   \node[below=0.1cm,font=\small] at (v4) {$v_{2l-1}$};     
   \node[below=0.1cm, font=\small] at (v5) {$v_{2l}$};     
   \node[below=0.1cm,font=\small] at (v6) {$z_{i+1}$}; 
      \node[below=0.1cm, font=\small] at (v0) {$z_i$};        
      
    \node[above=0.1cm, font=\small] at (w1) {$F_1$};    
     \node[above=0.1cm, font=\small] at (w2) {$w_l$};   
      \node[font=\small] at (A1) {$\ldots$};  
   \node[left=0.7cm,below=0.7cm] at (v4) {\small (iv) Segment $S_4$ (possibly $z_i = z_{i+1}$)};                 
 \end{tikzpicture}}
 \hspace{0.3cm}
\subfigure{\begin{tikzpicture}
[u/.style={fill=black, minimum size =3pt,ellipse,inner sep=1pt},node distance=1.5cm,scale=0.7]

\node[u] (v1) at (1, 0){};
\node[u] (v2) at (2, 0){};

  \draw (v1) -- (v2);  
  \draw (v1) -- (1, 1);
  \draw (v2) -- (2, 1);

   \node[below=0.1cm, font=\small] at (v1) {$z_i$};  
   \node[below=0.1cm,font=\small] at (v2) {$z_{i+1}$}; 
   \node[left=0.7cm,below=0.7cm] at (v2) {\small  (v) the segment $S_5$};       
            
 \end{tikzpicture}}
  \hspace{0.3cm}
\subfigure{ \begin{tikzpicture}
[u/.style={fill=black, minimum size =3pt,ellipse,inner sep=1pt},node distance=1.5cm,scale=0.7]
\node[u] (v0) at (0, 0){};
\node[u] (v1) at (1, 0){};
\node[u] (v2) at (2, 0){};
\node[u] (v3) at (3, 0){};
\node[u] (w1) at (1.5, 1){};

  \draw (v0) -- (v3);  
  \draw (v1) -- (w1);
  \draw (v2) -- (w1);                
  \draw (v0) -- (0, 1);     
   \draw (v3) -- (3, 1);          
   
   \node[below=0.1cm, font=\small] at (v1) {$v_1$};  
   \node[below=0.1cm,font=\small] at (v2) {$v_2$};       
   \node[below=0.1cm,font=\small] at (v3) {$z_{i+1}$}; 
      \node[below=0.1cm, font=\small] at (v0) {$z_i$};        
      
    \node[above=0.1cm, font=\small] at (w1) {$F_1$};    
    \node[left=0.7cm,below=0.7cm] at (v2) {\small (vi) An example $S_4$($l=1$)};          
 \end{tikzpicture} }
\end{center} 
\caption{\small The segments $S_1$--$S_5$. A black vertex denotes a 3-vertex and a white vertex denotes a 2-vertex.} 
\label{S45}
\end{figure*}


Note that 
\[
t(S_1)=1,\qquad 
t(S_2)=2,\qquad 
t(S_3)=3,\qquad 
t(S_4)=l \text{ for some } l\ge1,\qquad 
t(S_5)=0.
\]

\begin{claim}
\label{CL:S}
\begin{enumerate}[(1)]
\item If $A(C)=\emptyset$, then $C$ is an $S_4$-segment with 
$t(C)=\frac{d(C)}{2}$. Hence $d(C)$ is even.

\item If $A(C)=\{z_1\}$, then $C$ is an $S_4$-segment and 
$t(C)=\frac{d(C)-1}{2}$. Hence $d(C)$ is odd.

\item For each $i\in\{1,\dots,5\}$,
\[
t(S_i)\; = \;\left\lfloor \frac{|S_i|-1}{2} \right\rfloor,
\]
where $|S_i|$ denotes the number of vertices in $S_i$.
\end{enumerate}
\end{claim}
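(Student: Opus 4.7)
The plan is to handle the three parts in the order (3), (2), (1). Part (3) is a direct count from Figure~\ref{S45}: including the endpoints $z_i,z_{i+1}$ as cycle vertices we have $|S_1|=3$, $|S_2|=5$, $|S_3|=7$, $|S_4|=2l+2$, $|S_5|=2$, against the given values $t(S_i)\in\{1,2,3,l,0\}$, and in every case $t(S_i)=\lfloor(|S_i|-1)/2\rfloor$.

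For (2), with $A(C)=\{z_1\}$ we have $k=1$, so $M_1$ is the entire cycle traversed once with $z_1=z_2$ identified. By Claim~\ref{segment-S}, $M_1$ is isomorphic to some $S_j$. I would rule out $j\neq 4$ on length grounds: identifying the endpoints of $S_1$ or $S_5$ would produce a multi-edge or a loop, impossible since $G$ is simple, while identifying the endpoints of $S_2$ or $S_3$ would produce a $4$-cycle or a $6$-cycle, both forbidden by the hypothesis that $G$ has no $k$-cycle for $4\le k\le 8$. Hence $M_1\cong S_4$ with $l$ triangles, so $d(C)=2l+1$ is odd, and by part (3), $t(C)=l=(d(C)-1)/2$.

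For (1), the real task is to show $C$ carries no 2-vertex. The key preliminary observation is that, because $G$ has no 4-cycle, any two triangles in $G$ are vertex-disjoint: two triangles sharing an edge would yield a 4-cycle, and two triangles sharing only a vertex would force degree at least $4$ at that vertex, contradicting $\Delta(G)\le 3$. Combined with $A(C)=\emptyset$, every 3-vertex of $C$ is paired with a unique cycle-neighbor through a triangle-edge, so $C$ decomposes cyclically into blocks, each either a single 2-vertex (denoted $Z$) or a triangle-pair of two adjacent 3-vertices (denoted $P$). Claim~\ref{CL:2-vertex}(3) rules out $ZZ$, Claim~\ref{cl:reducible}(a) rules out $ZPZ$ (this is $T_1$), and Claim~\ref{cl:reducible}(b) rules out $PPZ$ and its reverse $ZPP$ (this is $T_2$). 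Now suppose $z$ is a 2-vertex on $C$; walk along $C$ from the $Z$-block at $z$ to the next $Z$-block (possibly $z$ itself) and let $m\ge 1$ be the number of intervening $P$-blocks. If $m=1$ the pattern is $ZPZ=T_1$; if $m\ge 2$ it contains $PPZ=T_2$. Either way a contradiction. Hence $C$ has no 2-vertex, so $C$ is a cyclic sequence of $l$ triangle-pairs; thus $d(C)=2l$ is even and $t(C)=l=d(C)/2$, realizing $C$ as a closed $S_4$-segment.

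The main obstacle is (1). The block-level reformulation makes the case split short, but it depends on the structural observation that triangles meeting $C$ are vertex-disjoint, so that the decomposition of $C$ into unambiguous $Z$- and $P$-blocks is well-defined; without this, the local patterns being compared against $T_1$ and $T_2$ would not be canonically determined, and one would have to redo the structural analysis by hand inside each segment.
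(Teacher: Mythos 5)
The paper states Claim~\ref{CL:S} without proof, asserting only that it ``follows directly from Claim~\ref{cl:reducible} and the definitions,'' and your argument is a correct write-up of exactly that intended route: the direct count for (3), Claim~\ref{segment-S} together with the forbidden cycle lengths $4$--$8$ for (2), and the exclusion of the patterns $ZZ$, $ZPZ$, $PPZ$ via Claim~\ref{CL:2-vertex}(3), $T_1$, and $T_2$ for (1). Your preliminary observation that the triangles meeting $C$ are pairwise vertex-disjoint (so the $Z$/$P$ block decomposition is well defined) is a detail the paper leaves implicit, but the approach is the same.
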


With the previous structural claims, we are now ready to estimate $t(C)$ for a $9^+$-cycle $C$.

\begin{claim}
\label{lemma-C11}
If $C$ is a $9^+$-cycle in $G$, then
\[
t(C)\;\le\; d(C)-\left\lceil \frac{d(C)}{2}\right\rceil.
\]
In particular, $t(C)\le d(C)-6$ whenever $d(C)\ge 11$.
\end{claim}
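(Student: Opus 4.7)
The plan is to bound $t(C)$ segment-by-segment, exploiting Claim~\ref{CL:S}(3), which gives $t(S_j)\le \lfloor (|S_j|-1)/2 \rfloor$ for each of the five configurations. The analysis splits on the size of $A(C)$. When $|A(C)| \le 1$, parts (1) and (2) of Claim~\ref{CL:S} already compute $t(C)=\lfloor d(C)/2 \rfloor$ exactly, and since $\lfloor d(C)/2 \rfloor = d(C)-\lceil d(C)/2 \rceil$, the desired bound holds with equality.

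When $|A(C)|=k\ge 2$, one uses the cyclic decomposition $C=M_1\cup \dots \cup M_k$ supplied by the definition, where $M_i$ is the $z_i$--$z_{i+1}$ path. Consecutive segments share only an endpoint, so the edge sets partition the edges of $C$, giving $\sum_{i=1}^{k}(|M_i|-1)=d(C)$. Since by definition $t(C)=\sum_i t(M_i)$, and by Claim~\ref{segment-S} each $M_i$ is isomorphic to some $S_j$, Claim~\ref{CL:S}(3) yields
$$t(C)\;\le\; \sum_{i=1}^{k} \left\lfloor \frac{|M_i|-1}{2}\right\rfloor.$$
Writing $a_i=|M_i|-1$ and letting $o$ be the number of indices $i$ with $a_i$ odd, the right-hand side equals $(d(C)-o)/2$.

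The final step is a short parity observation. If $d(C)$ is even, one gets $t(C)\le d(C)/2=\lfloor d(C)/2\rfloor$ regardless of $o$. If $d(C)$ is odd, then because $\sum a_i = d(C)$ is odd at least one $a_i$ must be odd, so $o\ge 1$ and $t(C)\le (d(C)-1)/2 = \lfloor d(C)/2\rfloor$. In either case $t(C)\le \lfloor d(C)/2\rfloor = d(C)-\lceil d(C)/2\rceil$, which is the first assertion. For the second, $d(C)\ge 11$ forces $\lceil d(C)/2\rceil \ge 6$, hence $t(C)\le d(C)-6$. There is no serious obstacle: the structural work has already been absorbed into Claims~\ref{segment-S} and~\ref{CL:S}, and the only content one needs to pay attention to is the parity trick that rescues the bound when $d(C)$ is odd.
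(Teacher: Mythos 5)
Your proposal is correct and follows essentially the same route as the paper: decompose $C$ into the segments $M_i$, apply $t(M_i)=\lfloor(|M_i|-1)/2\rfloor$ from Claims~\ref{segment-S} and~\ref{CL:S}, and use $\sum_i(|M_i|-1)=d(C)$ to conclude $t(C)\le\lfloor d(C)/2\rfloor=d(C)-\lceil d(C)/2\rceil$. The only cosmetic differences are that you make the parity bookkeeping behind $\sum_i\lfloor a_i/2\rfloor\le\lfloor\sum_i a_i/2\rfloor$ explicit and dispatch the case $|A(C)|=1$ via Claim~\ref{CL:S}(2) rather than folding it into the segment sum.
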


\begin{proof}
If $A(C)=\emptyset$, then $d(C)$ is even and
\[
t(C)=\frac{d(C)}{2}= d(C)-\left\lceil \frac{d(C)}{2} \right\rceil.
\]

Now assume $A(C)=\{z_1,\dots,z_k\}$ with $k\ge 1$.  
For each segment $M_i$, we have
\[
t(M_i)\;=\;\left\lfloor \frac{|M_i|-1}{2} \right\rfloor,
\]
where $|M_i|$ is the number of vertices on $M_i$ since $M_i \in \{S_1, S_2, S_3, S_4, S_5\}$ by Claim \ref{segment-S}.  Hence
\[
t(C)=\sum_{i=1}^k t(M_i)
\;=\;
\sum_{i=1}^k \left\lfloor \frac{|M_i|-1}{2} \right\rfloor
\le
 \left\lfloor \sum_{i=1}^k \frac{|M_i|-1}{2} \right\rfloor
=
\left\lfloor \frac{d(C)}{2}\right\rfloor
=
d(C)-\left\lceil \frac{d(C)}{2}\right\rceil,
\]
since $\sum_{i=1}^k (|M_i|-1) = d(C)$.

If $d(C)\ge 11$, then $\left\lceil \frac{d(C)}{2}\right\rceil\ge 6$, and therefore
\[
t(C)
\le
d(C)-\left\lceil \frac{d(C)}{2} \right\rceil
\le d(C)-6.
\]
\end{proof}

To complete the discharging argument, we need to  extend  $t(C) \leq d(C) - 6$ for $11^+$-cycles $C$ in 
Claim~\ref{lemma-C11} to cycles of lengths $9$ and $10$.

\begin{claim}
\label{lemma-C10}
If $C$ is a cycle with $d(C)\in\{9,10\}$, then $t(C)\le d(C)-6$.
\end{claim}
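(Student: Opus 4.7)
The plan is to argue by contradiction. Suppose $C$ is a cycle with $d(C)\in\{9,10\}$ and $t(C)\ge d(C)-5$, so that by Claim~\ref{lemma-C11} we have $t(C)=\lfloor d(C)/2\rfloor$, with equality throughout the estimate in that proof. Since $|M_i|-1$ is even exactly when $M_i\in\{S_1,S_2,S_3\}$, this equality forces: every segment to be odd-sized when $d(C)$ is even, or all-but-one segment to be odd-sized when $d(C)$ is odd. The remaining exceptional case is when $A(C)$ is empty (respectively a singleton), in which case $C$ itself is a single $S_4$-segment with $d(C)/2$ (respectively $(d(C)-1)/2$) attached triangles.

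For each resulting configuration, my aim is to exhibit a reducible subgraph contradicting the minimality of $G$. The strategy involves two kinds of reducibility arguments. First, configurations that embed some $T_i$ from Figure~\ref{T123-subgraph} are immediately handled by Claim~\ref{cl:reducible}; an $S_3$-segment, or an $S_2$-segment adjacent to an $S_1$- or $S_2$-segment, will typically contain a $T_2$ or $T_3$. Second, configurations with two consecutive triangles along $C$ embed a copy of $J_2$, and for these the plan is to delete the six vertices of $J_2$, color the rest of $G$ by minimality, and invoke Lemma~\ref{lem-J2} to extend the coloring.

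The main obstacle lies in the ``pure triangle'' cases: $d(C)=10$ with $A(C)=\emptyset$ and five triangles on $C$, and $d(C)=9$ with $|A(C)|=1$ and four triangles on $C$. These contain no $T_i$, and the naive deletion of the six vertices of a $J_2$-subgraph formed by two consecutive triangles leaves the two outer path vertices of $J_2$ with residual list of size only $2$---one short of the hypothesis of Lemma~\ref{lem-J2}(2) for the endpoint of list size $3$. My plan is to delete one additional vertex on $C$ adjacent to the $J_2$-block (for instance the vertex $v_5$ when the $J_2$ is formed by triangles on edges $v_1v_2$ and $v_3v_4$), which boosts one outer residual list size to at least $3$. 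I would then apply Lemma~\ref{lem-J2}(2) to color the $J_2$-vertices, and finally extend greedily to the extra deleted vertex, whose residual list stays large enough because it only gains a few colored neighbors in $G^2$. The absence of $4$- through $8$-cycles in $G$ is essential throughout for controlling the residual lists, since it rules out short cycles involving the triangle apex vertices $w_i$ and their off-cycle neighbors $y_i$ that would otherwise over-constrain the extension step.
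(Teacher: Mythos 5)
Your setup is sound and matches the paper's: assuming $t(C)\ge d(C)-5$ forces $t(C)=d(C)-5$ by Claim~\ref{lemma-C11}, and the equality analysis correctly pins down the segment structure ($s_4+s_5=10-d(C)$ in the paper's notation). The problem is that essentially all of the work lies in the reducibility step, and your plan for it has concrete failures. First, the heuristic that an $S_3$-segment, or an $S_2$-segment adjacent to an $S_1$- or $S_2$-segment, ``typically'' yields a copy of $T_2$ or $T_3$ is false in the cases that matter: $T_2$ requires two triangles on \emph{consecutive} edges of $C$, whereas in an $S_3$-segment the two triangles are separated by a $2$-vertex, and in two adjacent $S_2$-segments they are separated by a vertex of $A(C)$ and a $2$-vertex. (An $S_1$-segment followed by an $S_2$-segment in the right orientation does give $T_3$, and the paper uses exactly this, but that is the only such case.) Consequently the configurations $H_2$, $H_3$ for $d(C)=10$ and $F_1,F_4,F_6,F_7,F_8,F_9,F_{10},F_{11}$ (among others) for $d(C)=9$ contain no $T_i$, and each requires its own multi-stage coloring extension --- or, for six of the twelve $9$-cycle configurations, a Combinatorial Nullstellensatz coefficient computation, which the paper resorts to precisely because the extension arguments become unmanageable. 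Your proposal supplies no argument for any of these.

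Second, your treatment of the pure-triangle cases ($H_1$ and $F_{12}$) does not go through as written. Take $H_1$ with the $J_2$ formed by the triangles on $v_1v_2$ and $v_3v_4$ (apexes $v_{11},v_{12}$), and delete additionally $v_5$. (a) Only the end $v_4$ of the $J_2$-path is boosted to a residual list of size $3$; the other end $v_1$ still loses colors to $v_{10}$, $v_9$, $v_{15}$ and the third neighbour of $v_{11}$, so it retains only $2$, Lemma~\ref{lem-J2}(2) does not apply, and Lemma~\ref{lem-J2}(1) would require the unverified condition that the residual lists of $v_1$ and $v_{11}$ differ. (b) More fatally, $v_5$ cannot be colored greedily at the end: its $G^2$-neighbours are $v_3,v_4,v_{12},v_6,v_7,v_{13}$ and the third neighbour of $v_{13}$ --- seven distinct vertices, every one of which is colored before $v_5$ in your scheme, against a list of size $6$. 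This is exactly why the paper colors $H_1$ from the far side inward, choosing the colors of $v_9,v_{10}$, then $v_7,v_8$, then $v_5,v_6$ so that the apexes $v_{13},v_{14},v_{15}$ retain colors and are colored \emph{last}, applying Lemma~\ref{lem-J2} only to the remaining block $\{v_1,v_2,v_3,v_4,v_{11},v_{12}\}$. The same obstruction defeats your plan for $F_{12}$. As it stands, the proposal does not constitute a proof of the claim.
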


The proof of Claim~\ref{lemma-C10} is technical and is given in the next section.

\medskip
We now proceed with the discharging method.
For each vertex $x$, define $\omega(x)=2d(x)-6$,  
and for each face $f$, define $\omega(f)=d(f)-6$.  
Thus every element $x\in V(G)\cup F(G)$ has {\it initial charge} $\omega(x)$.
By Euler’s formula $|V(G)|-|E(G)|+|F(G)|=2$, we have
\begin{equation}\label{Eulereq}
\sum_{x\in V(G)\cup F(G)} \omega(x)
=
\sum_{v\in V(G)} (2d(v)-6)
\;+\;
\sum_{f\in F(G)} (d(f)-6)
=
-12.
\end{equation}

We now redistribute charge according to the following rule and let $\omega'(x)$ be the {\it  final charge} of $x$.

\medskip
\noindent\textbf{Discharging Rule}

\medskip \noindent
(R)  
Each $9^+$-face sends a charge of $1$ to each incident $2$-vertex  
and to each adjacent $3$-face.

\medskip

We will show that $\omega'(x)\ge 0$ for every $x\in V(G)\cup F(G)$.

\smallskip
If $x$ is a $2$-vertex, then it receives exactly $1$ from each of its two incident $9^+$-faces by (R), so $\omega'(x)=0$.  
If $x$ is a $3$-vertex, then it receives no charge and  
$\omega'(x)=\omega(x)=0$.

\smallskip
Let $f$ be a face of $G$.  
If $d(f)\ge 9$, then $t(f)\le d(f)-6$ by 
Claim~\ref{lemma-C11} and Claim~\ref{lemma-C10}.  
Thus
\[
\omega'(f)
=
d(f)-6 - t(f)
\ge 0.
\]
If $d(f)=3$, then $f$ receives $1$ from each of its three incident $9^+$-faces, so
\[
\omega'(f)= d(f)-6+3 = 0.
\]

\medskip
Therefore,
\[
0\;\le\;\sum_{x\in V(G)\cup F(G)} \omega'(x)
=
\sum_{x\in V(G)\cup F(G)} \omega(x)
=
-12,
\]
a contradiction.  
This completes the proof of Theorem \ref{main-thm}. \hfill $\Box$

\section{Proof of Claim~\ref{lemma-C10}}

We now complete the proof of Claim~\ref{lemma-C10} by contradiction.  
Assume that 
\[
t(C) \ge d(C)-5.
\]
Since $t(C) \le d(C)-\left\lceil d(C)/2 \right\rceil$ by Claim~\ref{lemma-C11} and  
$9 \le d(C) \le 10$, it follows that 
\[
t(C) = d(C)-5.
\]
By the definitions of the parameters, we first establish the following claim.

\begin{claim}
\label{cl:t(f)}
Let $C$ be a cycle with $9 \le d(C) \le 10$ and let $k = |A(C)|$.  
Let $s_i$ denote the number of $S_i$-segments on $C$, and let  
$p = s_4$.  
Let $l_1,\dots,l_p$ be the number of triangles  in the $p$ $S_4$-segments sharing an edge with $C$, respectively.  
Then the following hold.

\medskip \noindent
\textup{(1)} If $k=0$, then $d(C)=10$ and $G$ contains $H_1$ in Figure~\ref{H234} as a subgraph.

\medskip \noindent
\textup{(2)} If $k=1$, then $d(C)=9$ and $G$ contains $F_{12}$ in Figure~\ref{W12-subgraph} as a subgraph.

\medskip \noindent
\textup{(3)} If $k \ge 2$, then:

\begin{itemize}
\item[(3-1)]  
$t(C) = s_1 + 2s_2 + 3s_3 + (l_1+\cdots+l_p)$;

\item[(3-2)]  
$k = s_1+s_2+s_3+s_4+s_5 
   = s_1+s_2+s_3 + (5 - t(C))$;

\item[(3-3)]  
$5 = k + s_2 + 2s_3 + (l_1+\cdots+l_p)$.
\end{itemize}
\end{claim}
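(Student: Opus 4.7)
The plan is to split on $k = |A(C)|$ and invoke the structural Claim~\ref{CL:S} (when $k \le 1$) or the segmentation from Claim~\ref{segment-S} (when $k \ge 2$), each combined with the standing assumption $t(C) = d(C)-5$.

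For $k = 0$, Claim~\ref{CL:S}(1) gives $t(C) = d(C)/2$; equating with $d(C)-5$ forces $d(C) = 10$ and $t(C) = 5$. The cycle $C$ is thus an $S_4$-segment whose endpoints coincide, carrying $5$ attached triangles on a $10$-cycle---exactly the configuration $H_1$ in Figure~\ref{H234}, giving (1). For $k = 1$, Claim~\ref{CL:S}(2) gives $t(C) = (d(C)-1)/2$, and combining with $t(C) = d(C)-5$ forces $d(C) = 9$ and $t(C) = 4$; the resulting $S_4$-segment with $z_1$ as a pinch vertex and $4$ attached triangles matches $F_{12}$, proving (2).

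For $k \ge 2$, by Claim~\ref{segment-S}, each $M_i$ is isomorphic to one of $S_1,\ldots,S_5$. Identity (3-1) follows immediately by summing $t(M_i)$ using the values $t(S_1)=1$, $t(S_2)=2$, $t(S_3)=3$, $t(S_4)=l_j$ for the $j$th $S_4$-segment, and $t(S_5)=0$. For (3-2), the total number of segments equals $k = s_1+s_2+s_3+s_4+s_5$, and I would derive the auxiliary identity $s_4+s_5 = 5 - t(C)$ from the telescoping sum
\[
d(C) \;=\; \sum_{i=1}^{k}(|M_i|-1) \;=\; 2s_1+4s_2+6s_3+\sum_{j=1}^{p}(2l_j+1)+s_5 \;=\; 2\,t(C)+s_4+s_5,
\]
combined with $d(C) = t(C)+5$. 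Substituting $s_4+s_5 = 5-t(C)$ into the formula for $k$ yields (3-2). Finally, (3-3) is purely algebraic: the identity $5 = t(C)+s_4+s_5$ combined with (3-1) rewrites as $5 = s_1+2s_2+3s_3+\sum_{j=1}^{p} l_j + s_4+s_5 = k + s_2 + 2s_3 + \sum_{j=1}^{p} l_j$.

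The main obstacle is the cyclic bookkeeping in Part (3): one must verify that consecutive segments $M_i$ and $M_{i+1}$ share exactly the single vertex $z_{i+1}$, so that the telescoping $\sum_{i=1}^{k}(|M_i|-1) = d(C)$ holds without over- or undercounting edges of $C$. This is immediate from the definition of the segmentation, after which (3-1)--(3-3) follow from elementary arithmetic on the values of $t(S_j)$ and $|S_j|$. Parts (1) and (2) reduce to pattern-matching once Claim~\ref{CL:S} pins down $d(C)$ and $t(C)$.
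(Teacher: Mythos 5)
Your proposal is correct and follows essentially the same route as the paper: parts (1) and (2) by combining Claim~\ref{CL:S} with the standing assumption $t(C)=d(C)-5$, and part (3) by summing $t(M_i)$ and $|M_i|-1$ over the segment decomposition and doing elementary arithmetic. The paper organizes the algebra slightly differently (it records $d(C)=k+s_1+3s_2+5s_3+2\sum l_j$ and derives (3-3) before the second identity of (3-2), whereas you extract $s_4+s_5=5-t(C)$ first), but the two derivations are equivalent.
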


\begin{proof}
Parts (1) and (2) follow directly from Claim~\ref{CL:S}.
  
Assume now that $k \ge 2$.  
By the definitions we have:

\begin{itemize}
\item[(a)] 
$t(C) = s_1 + 2s_2 + 3s_3 + (l_1+\cdots+l_p)$;

\item[(b)] 
$k = s_1 + s_2 + s_3 + s_4 + s_5$;

\item[(c)] 
$d(C) = k + s_1 + 3s_2 + 5s_3 + 2(l_1+\cdots+l_p)$.
\end{itemize}
This immediately yields (3-1) and the first identity in (3-2).  

On the other hand, 
since $d(C) - t(C) = 5$, Equations (a) and (c) imply
\[
5 = k + s_2 + 2s_3 + (l_1+\cdots+l_p),
\]
which gives (3-3).  Now, using (3-1) and (3-3), we obtain
\[
t(C) - 5
  = s_1 + s_2 + s_3 + (l_1+\cdots+l_p) - k,
\]
and thus
\[
k = s_1 + s_2 + s_3 + (l_1+\cdots+l_p) - (t(C)-5),
\]
proving the second identity in (3-2).
\end{proof}

We now apply Claim~\ref{cl:t(f)} to obtain reducible configurations on $10$-cycles and $9$-cycles.

\subsection{Reducible configurations on 10-cycles}

Assume that  $C$ is a $10$-cycle with $t(C)=d(C)-5=5$.

\begin{claim}
$G$ contains $H_i$   in Figure~\ref{H234} as a subgraph for some $i\in\{1,2,3,4\}$.
\end{claim}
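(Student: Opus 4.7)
The plan is to use Claim~\ref{cl:t(f)} to exhaust all possible segment decompositions of $C$ under $d(C)=10$ and $t(C)=5$, and then identify each with one of $H_1,H_2,H_3,H_4$. Two sub-cases are already handled: if $k = |A(C)| = 0$, Claim~\ref{cl:t(f)}(1) gives $H_1$ directly, and $k = 1$ cannot occur here since Claim~\ref{cl:t(f)}(2) would force $d(C)=9$, contradicting $d(C)=10$. So I assume $k \geq 2$ and aim to produce $H_2$, $H_3$, or $H_4$.

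Applying Claim~\ref{cl:t(f)}(3-2) with $t(C)=5$ gives $s_4 + s_5 = 5 - t(C) = 0$, so $s_4 = s_5 = 0$ and in particular $p = s_4 = 0$, which kills the $l_1 + \cdots + l_p$ term in (3-1) and (3-3). Those identities then collapse to
\[
s_1 + 2s_2 + 3s_3 = 5, \qquad k = s_1 + s_2 + s_3.
\]
The non-negative integer solutions with $k \geq 2$ are precisely
\[
(s_1,s_2,s_3) \in \{(5,0,0),\,(3,1,0),\,(1,2,0),\,(2,0,1),\,(0,1,1)\},
\]
giving $k \in \{5,4,3,3,2\}$, respectively.

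For each profile I would list the multiset of segment types composing $C$ and examine every cyclic arrangement of these segments around $C$ to read off the resulting subgraph. The worked example introduced just before Claim~\ref{segment-S} already identifies $(2,0,1)$ with $H_3$. The remaining four profiles will be matched to $H_2$, $H_3$, $H_4$ (possibly with repetitions) by direct inspection against Figure~\ref{H234}. For the ``rigid'' profiles such as $(5,0,0)$ and $(0,1,1)$, there is essentially one cyclic ordering up to the dihedral symmetry of $C$, so the subgraph is determined uniquely by the segment multiset. The mixed profiles $(3,1,0)$ and $(1,2,0)$ admit several cyclic orderings and will require the most work.

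The principal obstacle is controlling these mixed-profile arrangements: a priori different orderings of $S_1, S_2, S_3$ could yield subgraphs not isomorphic to any $H_i$. I expect each such ``extra'' arrangement is excluded because the pendant triangle on an $S_2$- or $S_3$-segment, together with adjacent segment vertices and a short return through $C$ (or through another pendant triangle), closes a cycle of length between $4$ and $8$, which is forbidden in $G$ by the hypothesis of Theorem~\ref{main-thm}. The heart of the proof therefore consists of a careful short-cycle audit: for every cyclic ordering that does not already match one of $H_2, H_3, H_4$, I would exhibit an explicit $k$-cycle with $4 \leq k \leq 8$ passing through the pendant triangles and a corresponding stretch of $C$, thereby ruling it out and reducing the analysis to the three subgraphs pictured in Figure~\ref{H234}.
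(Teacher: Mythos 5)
Your setup matches the paper's: the reduction to $k\ge 2$, the identities $s_1+2s_2+3s_3=5$ and $k=s_1+s_2+s_3$ with $s_4=s_5=0$, and the resulting five profiles are all correct, and the profiles $(5,0,0)$, $(2,0,1)$, $(0,1,1)$ do yield $H_4$, $H_3$, $H_2$ respectively. The genuine gap is in your treatment of the mixed profiles $(3,1,0)$ and $(1,2,0)$. These do not match any $H_i$ in any cyclic arrangement (the profiles of $H_2,H_3,H_4$ are $(0,1,1)$, $(2,0,1)$, $(5,0,0)$), so they must be excluded outright rather than ``matched, with bad orderings ruled out.'' Your proposed exclusion mechanism --- exhibiting a forbidden cycle of length $4$--$8$ --- cannot succeed: each such configuration is just the $10$-cycle $C$ together with pendant triangles (each sharing a single edge with $C$) and pendant edges at the vertices of $A(C)$, and the only cycles in such a subgraph have lengths $3$, $10$, and $11$ (a triangle apex plus the long way around $C$). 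Nothing of length $4$ through $8$ appears, so the short-cycle audit comes up empty and the two profiles survive it.

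The correct mechanism, which the paper uses, is structural reducibility rather than the cycle-length hypothesis: in every orientation of the $S_2$-segments within the profiles $(3,1,0)$ and $(1,2,0)$, some vertex $z_j\in A(C)$ has a $2$-vertex on one side and a $2$-vertex followed by a triangle edge on the other, which is exactly the configuration $T_3$ of Figure~\ref{T123-subgraph}; this was already excluded in Claim~\ref{cl:reducible} by a coloring-extension argument using the minimality of $G$. Without invoking Claim~\ref{cl:reducible} (or reproving an equivalent reducibility statement), your argument cannot eliminate these two profiles, and the claim does not follow.
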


\begin{proof}
If $k=|A(C)| \le 1$, then by Claim~\ref{cl:t(f)} we must have $k=0$, and therefore $G$ contains $H_1$ as a subgraph as shown in Figure~\ref{H234}.

Assume now that $k \ge 2$.   
From Claim~\ref{cl:t(f)}(3-2), we have
\[
s_4 = s_5 = 0,
\qquad\text{since}\qquad
s_4 + s_5 = 5 - t(C) = 0,
\]
and from (3-1) and (3-3), we have
\[
5 = s_1 + 2s_2 + 3s_3, 
\qquad\text{and}\qquad
k = s_1 + s_2 + s_3.
\]

\medskip\noindent
\textbf{Case $k=2$.}  
Then $s_2 + 2s_3 = 3$.  
Since $s_2 \le k = 2$, we obtain $s_3=1$ and $s_2=1$, hence $s_1=0$.  
Thus $G$ contains $H_2$ in Figure~\ref{H234}.

\medskip\noindent
\textbf{Case $k=3$.}  
Then $s_2 + 2s_3 = 2$.

If $s_3=1$, then $s_2=0$ and $s_1=2$, yielding the configuration $H_3$ in Figure~\ref{H234}. 

If $s_3=0$, then $s_2=2$ and $s_1=1$, which forces $G$ to contain $T_3$, a contradiction to Claim~\ref{cl:reducible}.

\medskip\noindent
\textbf{Case $k=4$.}  
Then $s_3=0$, $s_2=1$, and $s_1=3$, again forcing $T_3$, a contradiction.

\medskip\noindent
\textbf{Case $k=5$.}  
Then $s_1=5$ and $s_2=s_3=0$, which yields the configuration $H_4$ in Figure~\ref{H234}.

\medskip

Thus $G$ must contain one of $H_1, H_2, H_3,$ or $H_4$   in Figure~\ref{H234} as a subgraph.
\end{proof}

We will show next that none of the configurations $H_1, H_2, H_3,$ or $H_4$ can occur in $G$, completing the contradiction and proving that  
$t(C) \le d(C)-6$ whenever $d(C)=10$.
\begin{figure}[htbp]
\begin{center}
\subfigure{\begin{tikzpicture}
[u/.style={fill=black, minimum size =3pt,ellipse,inner sep=1pt},node distance=1.5cm,scale=1.2]
\node[u] (v1) at (108:1){};
\node[u] (v2) at (72:1){};
\node[u] (v3) at (36:1){};
\node[u] (v4) at (0:1){};
\node[u] (v5) at (324:1){};
\node[u] (v6) at (288:1){};
\node[u] (v7) at (252:1){};
\node[u] (v8) at (216:1){};
\node[u] (v9) at (180:1){};
\node[u] (v10) at (144:1){};
\node[u] (v11) at (90:1.5){};
\node[u] (v12) at (18:1.5){};
\node[u] (v13) at (306:1.5){};
\node[u] (v14) at (234:1.5){};
\node[u] (v15) at (162:1.5){};
\node (C10) at (0, 0){$C_{10}$};

\draw   (0,0) circle[radius=1cm];
  \draw (v1) -- (v11);
  \draw (v2) -- (v11);   
  \draw (v3) -- (v12);
  \draw (v4) -- (v12);  
  \draw (v5) -- (v13);
  \draw (v6) -- (v13);
  \draw (v7) -- (v14);
  \draw (v8) -- (v14); 
  \draw (v9) -- (v15);
  \draw (v10) -- (v15);

   \node[left=0.05cm, above=-0.01cm, font=\scriptsize] at (v1) {$v_1$};  
   \node[ right=0.05cm, above=-0.01cm,  font=\scriptsize] at (v2) {$v_2$};  
   \node[above=0.05cm, font=\scriptsize] at (v3) {$v_3$};     
   \node[right=0.01cm, font=\scriptsize] at (v4) {$v_4$};     
    \node[below=0.2cm,right=0.01cm, font=\scriptsize] at (v5) {$v_5$};    
   \node[below=0.1cm,font=\scriptsize] at (v6) {$v_6$};      
   \node[right=0.05cm, below=0.05cm,  font=\scriptsize] at (v7) {$v_7$};  
   \node[left=-0.1cm, font=\scriptsize] at (v8) {$v_8$};    
    \node[left=-0.05cm,font=\scriptsize] at (v9) {$v_9$};  
   \node[left=0.1cm, above=0.05cm,  font=\scriptsize] at (v10) {$v_{10}$};        
    \node[above=0.02cm, font=\scriptsize] at (v11) {$v_{11}$}; 
    \node[above=0.01cm,  font=\scriptsize] at (v12) {$v_{12}$};        
    \node[below=0.02cm, font=\scriptsize] at (v13) {$v_{13}$};          
     \node[below=0.02cm,  font=\scriptsize] at (v14) {$v_{14}$};        
    \node[above=0.02cm, font=\scriptsize] at (v15) {$v_{15}$};               
    
 \node[below=0.05cm,  font=\scriptsize] at (v1) {\textcolor{blue}{5}};
\node[below=0.05cm,  font=\scriptsize] at (v2) {\textcolor{blue}{5}};  
\node[left=0.05cm,font=\scriptsize] at (v3) {\textcolor{blue}{5}};  
\node[above=0.05cm, left=0.05cm,font=\scriptsize] at (v4) {\textcolor{blue}{5}};  
\node[left=0.05cm,font=\scriptsize] at (v5) {\textcolor{blue}{5}};  
\node[above=0.05cm, font=\scriptsize] at (v6) {\textcolor{blue}{5}};  
\node[above=0.05cm, font=\scriptsize] at (v7) {\textcolor{blue}{5}};  
\node[right=0.05cm, font=\scriptsize] at (v8) {\textcolor{blue}{5}};  
\node[right=0.05cm, font=\scriptsize] at (v9) {\textcolor{blue}{5}};  
\node[right=0.05cm,  font=\scriptsize] at (v10) {\textcolor{blue}{5}}; 
\node[right=0.1cm, font=\scriptsize] at (v11) {\textcolor{blue}{3}}; 
\node[right=0.1cm, font=\scriptsize] at (v12) {\textcolor{blue}{3}}; 
\node[right=0.1cm, font=\scriptsize] at (v13) {\textcolor{blue}{3}}; 
\node[left=0.1cm, font=\scriptsize] at (v14) {\textcolor{blue}{3}}; 
\node[below=0.1cm, font=\scriptsize] at (v15) {\textcolor{blue}{3}}; 
\node[left=0.4cm,below=1cm] at (v6) {\small $H_1$}; 
 \end{tikzpicture}}
\subfigure{
\begin{tikzpicture}
[u/.style={fill=black, minimum size =3pt,ellipse,inner sep=1pt},node distance=1.5cm,scale=1.2]
\node[u] (v1) at (108:1){};
\node[u] (v2) at (72:1){};
\node[u] (v3) at (36:1){};
\node[u] (v4) at (0:1){};
\node[u] (v5) at (324:1){};
\node[u] (v6) at (288:1){};
\node[u] (v7) at (252:1){};
\node[u] (v8) at (216:1){};
\node[u] (v9) at (180:1){};
\node[u] (v10) at (144:1){};
\node[u] (v11) at (90:1.5){};
\node[u] (v12) at (342:1.5){};
\node[u] (v13) at (234:1.5){};
\node (C10) at (0, 0){$C_{10}$};

\draw   (0,0) circle[radius=1cm];
  \draw (v1) -- (v11);
  \draw (v2) -- (v11);   
  \draw (v4) -- (v12);
  \draw (v5) -- (v12);  
  \draw (v7) -- (v13);
  \draw (v8) -- (v13);

\node[u] (w6) at (288:1.4){};
\node[u] (w10) at (144:1.4){};
  \draw (v6) -- (w6);
  \draw (v10) -- (w10);

   \node[left=0.05cm, above=-0.01cm, font=\scriptsize] at (v1) {$v_1$};  
   \node[ right=0.05cm, above=-0.01cm,  font=\scriptsize] at (v2) {$v_2$};  
   \node[above=0.03cm,  right=0.01cm,font=\scriptsize] at (v3) {$v_3$};     
   \node[right=-0.03cm,font=\scriptsize] at (v4) {$v_4$};     
    \node[below=0.2cm,right=-0.05cm, font=\scriptsize] at (v5) {$v_5$};    
   \node[right=0.3cm, below=-0.01cm, font=\scriptsize] at (v6) {$v_6$};      
   \node[right=0.05cm, below=-0.05cm,  font=\scriptsize] at (v7) {$v_7$};  
   \node[left=-0.05cm, font=\scriptsize] at (v8) {$v_8$};    
    \node[left=-0.05cm,font=\scriptsize] at (v9) {$v_9$};  
   \node[left=0.1cm, above=0.05cm,  font=\scriptsize] at (v10) {$v_{10}$};        
    \node[above=0.05cm,  above=-0.01cm, font=\scriptsize] at (v11) {$v_{11}$}; 
    \node[below=0.05cm,  font=\scriptsize] at (v12) {$v_{12}$};        
    \node[below=0.02cm, font=\scriptsize] at (v13) {$v_{13}$};          
    
 \node[below=0.05cm,  font=\scriptsize] at (v1) {\textcolor{blue}{4}};
\node[below=0.05cm,  font=\scriptsize] at (v2) {\textcolor{blue}{5}};  
\node[left=0.05cm,font=\scriptsize] at (v3) {\textcolor{blue}{6}};  
\node[above=0.03cm, left=0.1cm,font=\scriptsize] at (v4) {\textcolor{blue}{5}};  
\node[left=0.05cm,font=\scriptsize] at (v5) {\textcolor{blue}{4}};  
\node[above=0.05cm, font=\scriptsize] at (v6) {\textcolor{blue}{3}};  
\node[above=0.05cm, font=\scriptsize] at (v7) {\textcolor{blue}{4}};  
\node[right=0.05cm, font=\scriptsize] at (v8) {\textcolor{blue}{5}};  
\node[right=0.05cm, font=\scriptsize] at (v9) {\textcolor{blue}{5}};  
\node[right=0.05cm,  font=\scriptsize] at (v10) {\textcolor{blue}{3}}; 
\node[right=0.1cm, font=\scriptsize] at (v11) {\textcolor{blue}{3}}; 
\node[above=0.1cm, font=\scriptsize] at (v12) {\textcolor{blue}{3}}; 
\node[left=0.1cm, font=\scriptsize] at (v13) {\textcolor{blue}{3}}; 
 \node[left=0.4cm,below=1cm]  at (v6) {\small $H_2$};  
 \end{tikzpicture}}
\subfigure{\begin{tikzpicture}
[u/.style={fill=black, minimum size =3pt,ellipse,inner sep=1pt},node distance=1.5cm,scale=1.2]
\node[u] (v1) at (108:1){};
\node[u] (v2) at (72:1){};
\node[u] (v3) at (36:1){};
\node[u] (v4) at (0:1){};
\node[u] (v5) at (324:1){};
\node[u] (v6) at (288:1){};
\node[u] (v7) at (252:1){};
\node[u] (v8) at (216:1){};
\node[u] (v9) at (180:1){};
\node[u] (v10) at (144:1){};
\node[u] (v11) at (90:1.5){};
\node[u] (v12) at (342:1.5){};

\node (C10) at (0, 0){$C_{10}$};

\draw   (0,0) circle[radius=1cm];
  \draw (v1) -- (v11);
  \draw (v2) -- (v11);   
  \draw (v4) -- (v12);
  \draw (v5) -- (v12);     
  
\node[u] (w6) at (288:1.4){};
\node[u] (w8) at (216:1.4){};
\node[u] (w10) at (144:1.4){};

  \draw (v6) -- (w6);
  \draw (v8) -- (w8);  
  \draw (v10) -- (w10);
    
       \node[left=0.05cm, above=-0.01cm, font=\scriptsize] at (v1) {$v_1$};  
   \node[ right=0.05cm, above=-0.01cm,  font=\scriptsize] at (v2) {$v_2$};  
   \node[above=0.05cm, right=0.01cm, font=\scriptsize] at (v3) {$v_3$};     
   \node[right=-0.05cm,font=\scriptsize] at (v4) {$v_4$};     
    \node[below=0.2cm,right=-0.08cm, font=\scriptsize] at (v5) {$v_5$};    
   \node[right=0.3cm, below=-0.01cm, font=\scriptsize] at (v6) {$v_6$};      
   \node[right=0.05cm, below=0.01cm,  font=\scriptsize] at (v7) {$v_7$};  
   \node[left=-0.05cm, font=\scriptsize] at (v8) {$v_8$};    
    \node[left=-0.02cm,font=\scriptsize] at (v9) {$v_9$};  
   \node[left=0.1cm, above=0.05cm,  font=\scriptsize] at (v10) {$v_{10}$};        
    \node[above=0.02cm, font=\scriptsize] at (v11) {$v_{11}$}; 
    \node[below=0.03cm,font=\scriptsize] at (v12) {$v_{12}$};        
    
 \node[below=0.05cm,  font=\scriptsize] at (v1) {\textcolor{blue}{4}};
\node[below=0.05cm,  font=\scriptsize] at (v2) {\textcolor{blue}{5}};  
\node[left=0.05cm,font=\scriptsize] at (v3) {\textcolor{blue}{6}};  
\node[above=0.05cm, left=0.05cm,font=\scriptsize] at (v4) {\textcolor{blue}{5}};  
\node[left=0.05cm,font=\scriptsize] at (v5) {\textcolor{blue}{4}};  
\node[above=0.05cm, font=\scriptsize] at (v6) {\textcolor{blue}{3}};  
\node[above=0.05cm, font=\scriptsize] at (v7) {\textcolor{blue}{4}};  
\node[right=0.05cm, font=\scriptsize] at (v8) {\textcolor{blue}{3}};  
\node[right=0.05cm, font=\scriptsize] at (v9) {\textcolor{blue}{4}};  
\node[right=0.05cm,  font=\scriptsize] at (v10) {\textcolor{blue}{3}}; 
\node[right=0.1cm, font=\scriptsize] at (v11) {\textcolor{blue}{3}}; 
\node[above=0.1cm, font=\scriptsize] at (v12) {\textcolor{blue}{3}}; 

 \node[left=0.4cm,below=1cm] at (v6) { \small  $H_3$};  
 \end{tikzpicture}}
\subfigure{\begin{tikzpicture}
[u/.style={fill=black, minimum size =3pt,ellipse,inner sep=1pt},node distance=1.5cm,scale=1.2]
\node[u] (v1) at (108:1){};
\node[u] (v2) at (72:1){};
\node[u] (v3) at (36:1){};
\node[u] (v4) at (0:1){};
\node[u] (v5) at (324:1){};
\node[u] (v6) at (288:1){};
\node[u] (v7) at (252:1){};
\node[u] (v8) at (216:1){};
\node[u] (v9) at (180:1){};
\node[u] (v10) at (144:1){};
\node (C10) at (0, 0){$C_{10}$};

\draw   (0,0) circle[radius=1cm];

 \node[u] (w2) at (72:1.4){};
\node[u] (w4) at (0:1.4){}; 
\node[u] (w6) at (288:1.4){};
\node[u] (w8) at (216:1.4){};
\node[u] (w10) at (144:1.4){};
  \draw (v2) -- (w2);
  \draw (v4) -- (w4);   
  \draw (v6) -- (w6);
  \draw (v8) -- (w8);  
  \draw (v10) -- (w10);
    
    \node[left=0.05cm, above=-0.01cm, font=\scriptsize] at (v1) {$v_1$};  
   \node[left=0.1cm, above=0.01cm,  font=\scriptsize] at (v2) {$v_2$};  
   \node[above=0.05cm, right=0.01cm, font=\scriptsize] at (v3) {$v_3$};     
   \node[right=0.2cm, above=-0.01cm, font=\scriptsize] at (v4) {$v_4$};     
    \node[below=0.1cm,right=-0.01cm, font=\scriptsize] at (v5) {$v_5$};    
   \node[right=0.25cm, below=-0.03cm, font=\scriptsize] at (v6) {$v_6$};      
   \node[right=0.05cm, below=0.01cm,  font=\scriptsize] at (v7) {$v_7$};  
   \node[left=-0.01cm, font=\scriptsize] at (v8) {$v_8$};    
    \node[left=-0.01cm,font=\scriptsize] at (v9) {$v_9$};  
   \node[left=0.1cm, above=0.05cm,  font=\scriptsize] at (v10) {$v_{10}$};

 \node[below=0.05cm,  font=\scriptsize] at (v1) {\textcolor{blue}{4}};
\node[below=0.05cm,  font=\scriptsize] at (v2) {\textcolor{blue}{3}};  
\node[left=0.05cm,font=\scriptsize] at (v3) {\textcolor{blue}{4}};  
\node[above=-0.05cm, left=0.05cm,font=\scriptsize] at (v4) {\textcolor{blue}{3}};  
\node[left=0.05cm,font=\scriptsize] at (v5) {\textcolor{blue}{4}};  
\node[above=0.05cm, font=\scriptsize] at (v6) {\textcolor{blue}{3}};  
\node[above=0.05cm, font=\scriptsize] at (v7) {\textcolor{blue}{4}};  
\node[right=0.02cm, font=\scriptsize] at (v8) {\textcolor{blue}{3}};  
\node[right=0.02cm, font=\scriptsize] at (v9) {\textcolor{blue}{4}};  
\node[right=0.05cm,  font=\scriptsize] at (v10) {\textcolor{blue}{3}}; 

\node[left=0.4cm,below=1cm] at (v6) {\small $H_4$}; 
 \end{tikzpicture} } 
\end{center} 
\caption{Reducible configurations on $10$-cycle.
The number at each vertex  denotes the number of available colors after coloring $(G-V(H_i))^2$.} 
\label{H234}
\end{figure}
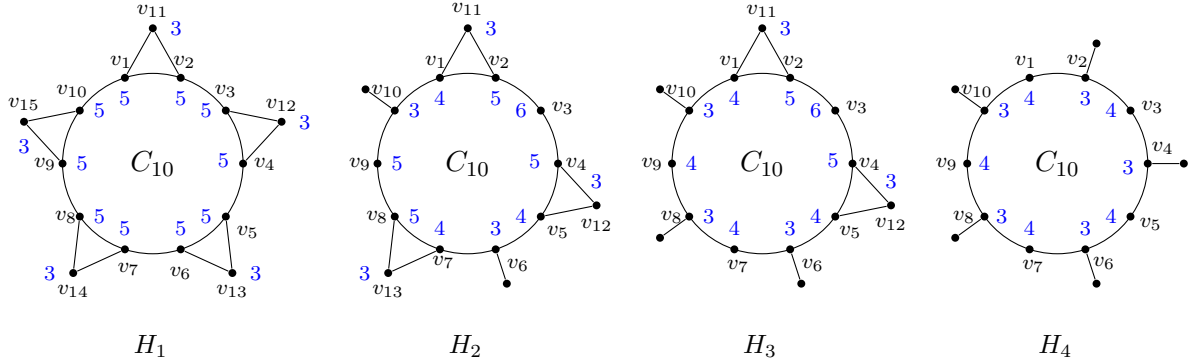


\begin{claim}\label{lem-H234}
For each $i \in \{1,2,3,4\}$, the graph $G$ does not contain $H_i$ as a subgraph.
\end{claim}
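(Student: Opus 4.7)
The plan is to argue by contradiction for each $i \in \{1,2,3,4\}$. Assume that $G$ contains $H_i$ as a subgraph. By the minimality of $G$, the square of $G - V(H_i)$ admits an $L$-coloring $\phi$. For each $v \in V(H_i)$, set
\[
L'(v) = L(v) \setminus \{\phi(u) : u \notin V(H_i),\ d_G(u,v) \le 2\}.
\]
Since $\Delta(G) \le 3$, a direct count of the distinct external neighbors of $v$ at distance at most $2$ (using that $G$ has no $k$-cycle for $4 \le k \le 8$, so no collisions occur between these external neighbors) shows that $|L'(v)|$ is at least the number displayed at $v$ in Figure~\ref{H234}. It then suffices to extend $\phi$ to an $L'$-coloring of the subgraph of $G^2$ induced on $V(H_i)$, which will contradict the choice of $G$.

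For $H_2$ and $H_3$, the key observation is that their $S_3$-segments (two triangles sharing a central $2$-vertex) closely resemble the graph $J_2$ of Lemma~\ref{lem-J2}. The plan is first to color the cycle endpoints $z_i, z_{i+1}$ of such a segment together with any nearby pendant or apex vertices that have the smallest residual lists. The residual lists remaining on the interior vertices of the $S_3$-segment should then match, possibly after a small case split on list intersections, the hypotheses of Lemma~\ref{lem-J2}, which completes the extension. For $H_4$, which carries five pendant $2$-vertices attached to alternate cycle vertices and no triangles, the plan is to color the pendants first---each has list size at least $3$ with only a few pre-colored constraints---and then finish the cycle by repeatedly applying Lemma~\ref{lem-key-K4-edge} to consecutive $P_4$-subpaths with lists of sizes matching its hypothesis.

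The most delicate case is $H_1$, the $10$-cycle with five disjoint triangles attached uniformly. Every cycle vertex has residual list of size $5$ but degree $6$ in the induced square on $V(H_1)$, and every apex vertex has list of size $3$ and degree $4$, so no vertex has list strictly greater than its degree and a naive greedy argument cannot succeed. The plan is first to exploit the $5$-fold cyclic symmetry to color the apices $v_{11},\dots,v_{15}$ via a Hall-type matching argument on the auxiliary bipartite graph between apices and their lists, chosen so that every cycle vertex retains a residual list of size at least $3$ after the two apex colors at distance $\le 2$ are removed. The remaining coloring problem on the $10$-cycle's square, with each vertex having list of size at least $3$, can then be completed by a direct case analysis or by an application of the Combinatorial Nullstellensatz, as suggested by the paper's keywords. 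The main obstacle I anticipate is ruling out bad aligned apex-colorings in which several cycle vertices simultaneously lose three or more colors; this will likely require enumerating a small number of obstructive list patterns and verifying that each such pattern can be resolved by reassigning the color on at most one apex.
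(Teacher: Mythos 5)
Your setup (minimality, deleting $V(H_i)$, residual lists $L'$ matching Figure~\ref{H234}) is exactly the paper's, but the core of your argument for the hardest case $H_1$ does not work. You propose to color the five apices $v_{11},\dots,v_{15}$ first so that every cycle vertex retains a list of size at least $3$, and then to finish the square of the $10$-cycle by case analysis or the Combinatorial Nullstellensatz. Each cycle vertex is within distance $2$ of exactly two apices (so your worry about a vertex losing three or more colors to apices cannot occur), and in the worst case every cycle vertex is left with exactly $3$ colors; but $\chi(C_{10}^2)=4$ since $10\not\equiv 0\pmod 3$, so $C_{10}^2$ is not $3$-colorable, let alone $3$-choosable. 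If the ten residual lists happen to coincide, no completion exists, and the Combinatorial Nullstellensatz cannot rescue this: the only admissible monomial of $P_{C_{10}^2}$ would be $\prod_i x_i^2$, whose coefficient must vanish precisely because $C_{10}^2$ is not $3$-choosable. Salvaging an ``apices first'' order would require guaranteeing non-uniform or larger residual lists on the cycle, which your Hall-type selection does not provide. The paper proceeds in the opposite order: it colors $v_9,v_{10}$, then $v_7,v_8$, then $v_5,v_6$, each time choosing the pair of colors to avoid the $3$-list of the apex between them (possible because $5$-lists meet $3$-lists in a small set), observes that $\{v_1,v_2,v_3,v_4,v_{11},v_{12}\}$ induces exactly the configuration $J_2$ and applies Lemma~\ref{lem-J2}(2), and only then colors $v_{13},v_{14},v_{15}$, each of which has lost at most two of its three colors.

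Two secondary points. First, your identification of $J_2$ with the $S_3$-segments of $H_2$ and $H_3$ is off: an $S_3$-segment has a five-vertex path between the two triangle apices, whereas $J_2$ has a four-vertex path; the genuine $J_2$ occurrence is the pair of consecutive triangles in $H_1$ (and the paper finishes $H_2$ and $H_3$ with the $P_4$ Lemma~\ref{lem-key-K4-edge} instead, after a carefully ordered greedy phase). Second, for $H_4$ the pendant neighbors $w_i$ are not in $V(H_4)$, and ``repeatedly applying Lemma~\ref{lem-key-K4-edge} to consecutive $P_4$-subpaths'' of $C_{10}^2$ is not justified, since those subpaths interact in the square; the paper instead fixes $c_1,c_4$ so that $v_3$ keeps three colors and then colors $v_2,v_{10},v_9,v_8,v_6,v_7,v_5,v_3$ greedily in that order. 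As written, your proposal is a plan whose central step for $H_1$ fails.
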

\noindent {\bf \it Proof.}
Suppose to the contrary that $G$ contains $H_i$ as a subgraph for some $i \in \{1,2,3,4\}$.  
For each $i \in \{1,2,3,4\}$, let $G_1 = G - V(H_i)$.  
Since no vertex of $H_i$ has two neighbors outside $H_i$, for any two vertices $u,v \in V(G_1)$, if $d_G(u,v)\le 2$ then $d_{G_1}(u,v)\le 2$.  
Moreover, because $G$ contains no $4$- to $8$-cycles, any two $3$-vertices of $H_i$ that do not lie on the $10$-cycle have distance at least $3$ in $G$, so their colors never conflict.

By the minimality of $G$, the graph $G_1^2$ admits an $L$-coloring $\phi$.  
We will show that $\phi$ extends to $H_i$, producing an $L$-coloring of $G^2$, contradicting that $G$ is a counterexample.

For each $v \in V(H_i)$, define
\[
L_{H_i}(v)
   = L(v)\setminus\{\phi(x) : vx \in E(G^2)\ \text{and}\ x \notin V(H_i)\}.
\]

\begin{enumerate}[(a)]
\item \textbf{Case $H_i = H_1$.}

From the structure of $H_1$  and as indicated in Figure~\ref{H234}, we have:
\[
|L_{H_1}(v_i)| \ge 
\begin{cases}
3, & i \in \{11,\dots,15\},\\[2mm]
5, & i \in \{1,\dots,10\}.
\end{cases}
\]

We first extend $\phi$ to $v_9,v_{10}$, then to $v_7,v_8$, and finally to $v_5,v_6$, choosing colors so that each of $v_{13},v_{14},v_{15}$ always retains at least one available color regardless of later choices.

Since $|L_{H_1}(v_9)|, |L_{H_1}(v_{10})|\ge 5$ and $|L_{H_1}(v_{15})|\ge 3$, we may choose distinct colors
\[
c_9 \in L_{H_1}(v_9), \qquad c_{10} \in L_{H_1}(v_{10})
\]
so that $|L_{H_1}(v_{15}) \setminus \{c_9,c_{10}\}| \ge 3$.  
Color $v_9$ and $v_{10}$ with $c_9$ and $c_{10}$, respectively.  
The resulting available list sizes are shown in  Figure~\ref{C10-five-Steps} (1).

Applying the same argument, we color $v_7,v_8$ and then $v_5,v_6$ so that the remaining available list sizes match those in  Figure~\ref{C10-five-Steps} (2).

Next, observe that the subgraph induced by $\{v_1,v_2,v_3,v_4,v_{11},v_{12}\}$ is $J_2$ (Figure~\ref{fig-J12}), and their list sizes satisfy Lemma~\ref{lem-J2}(2).  
By Lemma~\ref{lem-J2}(2), we can color these six vertices.

Finally, color $v_{13}$, then $v_{14}$, and then $v_{15}$.  
This yields an $L$-coloring of $G^2$, a contradiction.

\begin{figure*}[htbp]
\begin{multicols}{2}
\begin{center}
\begin{tikzpicture}
[u/.style={fill=black, minimum size =3pt,ellipse,inner sep=1pt},node distance=1.5cm,scale=1.5]
\node[u] (v1) at (108:1){};
\node[u] (v2) at (72:1){};
\node[u] (v3) at (36:1){};
\node[u] (v4) at (0:1){};
\node[u] (v5) at (324:1){};
\node[u] (v6) at (288:1){};
\node[u] (v7) at (252:1){};
\node[u] (v8) at (216:1){};
\node[u] (v9) at (180:1){};
\node[u] (v10) at (144:1){};
\node[u] (v11) at (90:1.5){};
\node[u] (v12) at (18:1.5){};
\node[u] (v13) at (306:1.5){};
\node[u] (v14) at (234:1.5){};
\node[u] (v15) at (162:1.5){};
\node (C10) at (0, 0){$C_{10}$};

\draw   (0,0) circle[radius=1cm];
  \draw (v1) -- (v11);
  \draw (v2) -- (v11);   
  \draw (v3) -- (v12);
  \draw (v4) -- (v12);  
  \draw (v5) -- (v13);
  \draw (v6) -- (v13);
  \draw (v7) -- (v14);
  \draw (v8) -- (v14); 
  \draw (v9) -- (v15);
  \draw (v10) -- (v15);

   \node[left=0.05cm, above=0.05cm, font=\small] at (v1) {$v_1$};  
   \node[ right=0.05cm, above=0.05cm,  font=\small] at (v2) {$v_2$};  
   \node[above=0.05cm, font=\small] at (v3) {$v_3$};     
   \node[right=0.05cm,font=\small] at (v4) {$v_4$};     
    \node[below=0.2cm,right=0.01cm, font=\small] at (v5) {$v_5$};    
   \node[below=0.1cm,font=\small] at (v6) {$v_6$};      
   \node[right=0.05cm, below=0.05cm,  font=\small] at (v7) {$v_7$};  
   \node[left=0.1cm, font=\small] at (v8) {$v_8$};    
    \node[left=0.05cm,font=\small] at (v9) {$v_9$};  
   \node[left=0.1cm, above=0.05cm,  font=\small] at (v10) {$v_{10}$};        
    \node[above=0.05cm, font=\small] at (v11) {$v_{11}$}; 
    \node[above=0.05cm,  font=\small] at (v12) {$v_{12}$};        
    \node[below=0.05cm, font=\small] at (v13) {$v_{13}$};          
     \node[below=0.05cm,  font=\small] at (v14) {$v_{14}$};        
    \node[above=0.05cm, font=\small] at (v15) {$v_{15}$};               
    
 \node[below=0.05cm,  font=\scriptsize] at (v1) {\textcolor{blue}{3}};
\node[below=0.05cm,  font=\scriptsize] at (v2) {\textcolor{blue}{4}};  
\node[left=0.05cm,font=\scriptsize] at (v3) {\textcolor{blue}{5}};  
\node[above=0.05cm, left=0.05cm,font=\scriptsize] at (v4) {\textcolor{blue}{5}};  
\node[left=0.05cm,font=\scriptsize] at (v5) {\textcolor{blue}{5}};  
\node[above=0.05cm, font=\scriptsize] at (v6) {\textcolor{blue}{5}};  
\node[above=0.05cm, font=\scriptsize] at (v7) {\textcolor{blue}{4}};  
\node[right=0.05cm, font=\scriptsize] at (v8) {\textcolor{blue}{3}};  
\node[right=0.05cm, font=\scriptsize] at (v9) {};  
\node[right=0.05cm,  font=\scriptsize] at (v10) {}; 
\node[right=0.1cm, font=\scriptsize] at (v11) {\textcolor{blue}{2}}; 
\node[right=0.1cm, font=\scriptsize] at (v12) {\textcolor{blue}{3}}; 
\node[right=0.1cm, font=\scriptsize] at (v13) {\textcolor{blue}{3}}; 
\node[left=0.1cm, font=\scriptsize] at (v14) {\textcolor{blue}{2}}; 
\node[below=0.1cm, font=\scriptsize] at (v15) {\textcolor{blue}{3}}; 
 \end{tikzpicture}
        \vfill {\small (1)  The list sizes after coloring $v_9, v_{10}$\ \ \ \ }  
\end{center} 
\par
\begin{center}
\begin{tikzpicture}
[u/.style={fill=black, minimum size =3pt,ellipse,inner sep=1pt},node distance=1.5cm,scale=1.5]
\node[u] (v1) at (108:1){};
\node[u] (v2) at (72:1){};
\node[u] (v3) at (36:1){};
\node[u] (v4) at (0:1){};
\node[u] (v5) at (324:1){};
\node[u] (v6) at (288:1){};
\node[u] (v7) at (252:1){};
\node[u] (v8) at (216:1){};
\node[u] (v9) at (180:1){};
\node[u] (v10) at (144:1){};
\node[u] (v11) at (90:1.5){};
\node[u] (v12) at (18:1.5){};
\node[u] (v13) at (306:1.5){};
\node[u] (v14) at (234:1.5){};
\node[u] (v15) at (162:1.5){};
\node (C10) at (0, 0){$C_{10}$};

\draw   (0,0) circle[radius=1cm];
  \draw (v1) -- (v11);
  \draw (v2) -- (v11);   
  \draw (v3) -- (v12);
  \draw (v4) -- (v12);  
  \draw (v5) -- (v13);
  \draw (v6) -- (v13);
  \draw (v7) -- (v14);
  \draw (v8) -- (v14); 
  \draw (v9) -- (v15);
  \draw (v10) -- (v15);

   \node[left=0.05cm, above=0.05cm, font=\small] at (v1) {$v_1$};  
   \node[ right=0.05cm, above=0.05cm,  font=\small] at (v2) {$v_2$};  
   \node[above=0.05cm, font=\small] at (v3) {$v_3$};     
   \node[right=0.05cm,font=\small] at (v4) {$v_4$};     
    \node[below=0.2cm,right=0.01cm, font=\small] at (v5) {$v_5$};    
   \node[below=0.1cm,font=\small] at (v6) {$v_6$};      
   \node[right=0.05cm, below=0.05cm,  font=\small] at (v7) {$v_7$};  
   \node[left=0.1cm, font=\small] at (v8) {$v_8$};    
    \node[left=0.05cm,font=\small] at (v9) {$v_9$};  
   \node[left=0.1cm, above=0.05cm,  font=\small] at (v10) {$v_{10}$};        
    \node[above=0.05cm, font=\small] at (v11) {$v_{11}$}; 
    \node[above=0.05cm,  font=\small] at (v12) {$v_{12}$};        
    \node[below=0.05cm, font=\small] at (v13) {$v_{13}$};          
     \node[below=0.05cm,  font=\small] at (v14) {$v_{14}$};        
    \node[above=0.05cm, font=\small] at (v15) {$v_{15}$};               
    
 \node[below=0.05cm,  font=\scriptsize] at (v1) {\textcolor{blue}{3}};
\node[below=0.05cm,  font=\scriptsize] at (v2) {\textcolor{blue}{4}};  
\node[left=0.05cm,font=\scriptsize] at (v3) {\textcolor{blue}{4}};  
\node[above=0.05cm, left=0.05cm,font=\scriptsize] at (v4) {\textcolor{blue}{3}};  
\node[left=0.05cm,font=\scriptsize] at (v5) {};    
\node[above=0.05cm, font=\scriptsize] at (v6) {};   
\node[above=0.05cm, font=\scriptsize] at (v7) {};   
\node[right=0.05cm, font=\scriptsize] at (v8) {};  
\node[right=0.05cm, font=\scriptsize] at (v9) {};  
\node[right=0.05cm,  font=\scriptsize] at (v10) {}; 
\node[right=0.1cm, font=\scriptsize] at (v11) {\textcolor{blue}{2}}; 
\node[right=0.1cm, font=\scriptsize] at (v12) {\textcolor{blue}{2}}; 
\node[right=0.1cm, font=\scriptsize] at (v13) {\textcolor{blue}{2}}; 
\node[left=0.1cm, font=\scriptsize] at (v14) {\textcolor{blue}{1}}; 
\node[below=0.1cm, font=\scriptsize] at (v15) {\textcolor{blue}{2}}; 
 \end{tikzpicture}
        \vfill {\small (2)  The list sizes after coloring $v_9, v_{10}, v_7,v_8, v_5, v_6$} 
\end{center}
\end{multicols} 
\caption{
The graph $H_1$: the list sizes after coloring some vertices $H_1$} 
\label{C10-five-Steps}
\end{figure*}

\item \textbf{Case $H_i = H_2$.}

As indicated in Figure~\ref{H234}, we have:
\[
|L_{H_2}(v_i)| \ge 
\begin{cases}
3, & i = 6, 10, 11, 12, 13, \\
4, & i = 1, 5, 7, \\
5, & i = 2, 4, 8, 9, \\
6, & i = 3.
\end{cases}
\]

Using an argument similar to the previous case, there exist distinct colors  
$c_7 \in L_{H_2}(v_7)$ and $c_8 \in L_{H_2}(v_8)$ such that  
\(|L_{H_2}(v_{13}) \setminus \{c_7, c_8\}| \ge 3\).  
Color \(v_7\) and \(v_8\) with \(c_7\) and \(c_8\), respectively.

Since \(|L_{H_2}(v_1)| \ge 4\) and \(|L_{H_2}(v_{11})| \ge 3\), there is a color  
\(\alpha \in L_{H_2}(v_1)\) such that  
\(|L_{H_2}(v_{11}) \setminus \{\alpha\}| \ge 3\).

Next, color \(v_9\) with a color  
\(c_9 \in L_{H_2}(v_9) \setminus \{c_7, c_8, \alpha\}\),  
and color \(v_{10}\) with a color  
\(c_{10} \in L_{H_2}(v_{10}) \setminus \{c_8, c_9\}\).  
According to the list sizes indicated in Figure~\ref{H2-color} (a), we can then color 
\(v_6, v_5, v_{12}, v_{13}\) in this order.

Let \(L'_{H_2}(v_i)\) denote the list of available colors at \(v_i\) for  
\(i = 1,2,3,4,11\).  
As shown in Figure~\ref{H2-color} (b),
\[
|L'_{H_2}(v_1)| \ge 2,\quad 
|L'_{H_2}(v_{11})| \ge 2,\quad
|L'_{H_2}(v_2)| \ge 4,\quad
|L'_{H_2}(v_3)| \ge 4,\quad
|L'_{H_2}(v_4)| \ge 2.
\]
Moreover, by the choice of \(c_9\) and \(c_{10}\),  
we can conclude that $L'_{H_2}(v_1) \neq L'_{H_2}(v_{11})$ if $|L'_{H_2}(v_1)| = 2$.
In fact, if $c_{10} \neq \alpha$, then 
$\alpha \in L_{H_2}'(v_1) \setminus L_{H_2}'(v_{11})$. So, we have that $L'_{H_2}(v_1) \neq L'_{H_2}(v_{11})$.
 If $c_{10} = \alpha$, then 
$|L_{H_2}'(v_{11})| = 3$, so
 we have that $L'_{H_2}(v_1) \neq L'_{H_2}(v_{11})$ if $|L'_{H_2}(v_1)| = 2$.

Thus, there exists a color \(\beta \in L'_{H_2}(v_{11})\) such that  
\(|L'_{H_2}(v_1)\setminus\{\beta\}| \ge 2\).  
Color \(v_{11}\) with \(\beta\).
Let \(L''_{H_2}(v_i)\) be the list of available colors at  
\(v_i\) for \(i = 1,2,3,4\).  
As indicated in Figure~\ref{H2-color} (c), we have
\[
|L''_{H_2}(v_1)| \ge 2,\quad
|L''_{H_2}(v_2)| \ge 3,\quad
|L''_{H_2}(v_3)| \ge 3,\quad
|L''_{H_2}(v_4)| \ge 2.
\]

By Lemma~\ref{lem-key-K4-edge}, we may further extend \(\phi\) to  
\(v_1, v_2, v_3, v_4\), yielding an \(L\)-coloring of \(G^2\),  
a contradiction.
\begin{figure*}[htbp]
\begin{multicols}{3}
\begin{center}
\begin{tikzpicture}
[u/.style={fill=black, minimum size =3pt,ellipse,inner sep=1pt},node distance=1.5cm,scale=1.5]
\node[u] (v1) at (108:1){};
\node[u] (v2) at (72:1){};
\node[u] (v3) at (36:1){};
\node[u] (v4) at (0:1){};
\node[u] (v5) at (324:1){};
\node[u] (v6) at (288:1){};
\node[u] (v7) at (252:1){};
\node[u] (v8) at (216:1){};
\node[u] (v9) at (180:1){};
\node[u] (v10) at (144:1){};
\node[u] (v11) at (90:1.5){};
\node[u] (v12) at (342:1.5){};
\node[u] (v13) at (234:1.5){};
\node (C10) at (0, 0){$C_{10}$};

\draw   (0,0) circle[radius=1cm];
  \draw (v1) -- (v11);
  \draw (v2) -- (v11);   
  \draw (v4) -- (v12);
  \draw (v5) -- (v12);  
  \draw (v7) -- (v13);
  \draw (v8) -- (v13);

\node[u] (w6) at (288:1.4){};
\node[u] (w10) at (144:1.4){};
  \draw (v6) -- (w6);
  \draw (v10) -- (w10);
    
   \node[left=0.05cm, above=0.05cm, font=\small] at (v1) {$v_1$};  
   \node[ right=0.05cm, above=0.05cm,  font=\small] at (v2) {$v_2$};  
   \node[above=0.05cm, font=\small] at (v3) {$v_3$};     
   \node[right=0.05cm,font=\small] at (v4) {$v_4$};     
    \node[below=0.2cm,right=0.01cm, font=\small] at (v5) {$v_5$};    
   \node[right=0.3cm, below=0.01cm, font=\small] at (v6) {$v_6$};      
   \node[right=0.05cm, below=0.05cm,  font=\small] at (v7) {$v_7$};  
   \node[left=0.1cm, font=\small] at (v8) {$v_8$};    
    \node[left=0.05cm,font=\small] at (v9) {$v_9$};  
   \node[left=0.1cm, above=0.05cm,  font=\small] at (v10) {$v_{10}$};        
    \node[above=0.05cm, font=\small] at (v11) {$v_{11}$}; 
    \node[below=0.05cm,  font=\small] at (v12) {$v_{12}$};        
    \node[below=0.05cm, font=\small] at (v13) {$v_{13}$};          
    
 \node[below=0.05cm,  font=\scriptsize] at (v1) {\textcolor{blue}{4}};
\node[below=0.05cm,  font=\scriptsize] at (v2) {\textcolor{blue}{5}};  
\node[left=0.05cm,font=\scriptsize] at (v3) {\textcolor{blue}{6}};  
\node[above=0.05cm, left=0.05cm,font=\scriptsize] at (v4) {\textcolor{blue}{5}};  
\node[left=0.05cm,font=\scriptsize] at (v5) {\textcolor{blue}{3}};  
\node[above=0.05cm, font=\scriptsize] at (v6) {\textcolor{blue}{1}};  
\node[right=0.05cm, font=\scriptsize] at (v9) {\textcolor{blue}{3}};  
\node[right=0.05cm,  font=\scriptsize] at (v10) {\textcolor{blue}{2}}; 
\node[right=0.1cm, font=\scriptsize] at (v11) {\textcolor{blue}{3}}; 
\node[above=0.1cm, font=\scriptsize] at (v12) {\textcolor{blue}{3}}; 
\node[left=0.1cm, font=\scriptsize] at (v13) {\textcolor{blue}{3}}; 
 \end{tikzpicture}
        \vfill {\small (a) The list sizes after coloring $v_7,v_8$\  \ \ \  \ \ } 
        \end{center}
\par
\begin{center}
\begin{tikzpicture}
[u/.style={fill=black, minimum size =3pt,ellipse,inner sep=1pt},node distance=1.5cm,scale=1.5]
\node[u] (v1) at (108:1){};
\node[u] (v2) at (72:1){};
\node[u] (v3) at (36:1){};
\node[u] (v4) at (0:1){};
\node[u] (v5) at (324:1){};
\node[u] (v6) at (288:1){};
\node[u] (v7) at (252:1){};
\node[u] (v8) at (216:1){};
\node[u] (v9) at (180:1){};
\node[u] (v10) at (144:1){};
\node[u] (v11) at (90:1.5){};
\node[u] (v12) at (342:1.5){};
\node[u] (v13) at (234:1.5){};
\node (C10) at (0, 0){$C_{10}$};

\draw   (0,0) circle[radius=1cm];
  \draw (v1) -- (v11);
  \draw (v2) -- (v11);   
  \draw (v4) -- (v12);
  \draw (v5) -- (v12);  
  \draw (v7) -- (v13);
  \draw (v8) -- (v13);

\node[u] (w6) at (288:1.4){};
\node[u] (w10) at (144:1.4){};
  \draw (v6) -- (w6);
  \draw (v10) -- (w10);
    
   \node[left=0.05cm, above=0.05cm, font=\small] at (v1) {$v_1$};  
   \node[ right=0.05cm, above=0.05cm,  font=\small] at (v2) {$v_2$};  
   \node[above=0.05cm, font=\small] at (v3) {$v_3$};     
   \node[right=0.05cm,font=\small] at (v4) {$v_4$};     
    \node[below=0.2cm,right=0.01cm, font=\small] at (v5) {$v_5$};    
   \node[right=0.3cm, below=0.01cm, font=\small] at (v6) {$v_6$};      
   \node[right=0.05cm, below=0.05cm,  font=\small] at (v7) {$v_7$};  
   \node[left=0.1cm, font=\small] at (v8) {$v_8$};    
    \node[left=0.05cm,font=\small] at (v9) {$v_9$};  
   \node[left=0.1cm, above=0.05cm,  font=\small] at (v10) {$v_{10}$};        
    \node[above=0.05cm, font=\small] at (v11) {$v_{11}$}; 
    \node[below=0.05cm,  font=\small] at (v12) {$v_{12}$};        
    \node[below=0.05cm, font=\small] at (v13) {$v_{13}$};          
    
 \node[below=0.05cm,  font=\scriptsize] at (v1) {\textcolor{blue}{2}};
\node[below=0.05cm,  font=\scriptsize] at (v2) {\textcolor{blue}{4}};  
\node[left=0.05cm,font=\scriptsize] at (v3) {\textcolor{blue}{4}};  
\node[above=0.05cm, left=0.05cm,font=\scriptsize] at (v4) {\textcolor{blue}{2}};  
\node[right=0.1cm, font=\scriptsize] at (v11) {\textcolor{blue}{2}}; 

 \end{tikzpicture}
 \vfill {\small (b)  The list sizes  after coloring $v_i$ for $i =8,10,13,5,6,12$} 
 \end{center}
       \par
\begin{center}
\begin{tikzpicture}
[u/.style={fill=black, minimum size =3pt,ellipse,inner sep=1pt},node distance=1.5cm,scale=1.5]
\node[u] (v1) at (108:1){};
\node[u] (v2) at (72:1){};
\node[u] (v3) at (36:1){};
\node[u] (v4) at (0:1){};
\node[u] (v5) at (324:1){};
\node[u] (v6) at (288:1){};
\node[u] (v7) at (252:1){};
\node[u] (v8) at (216:1){};
\node[u] (v9) at (180:1){};
\node[u] (v10) at (144:1){};
\node[u] (v11) at (90:1.5){};
\node[u] (v12) at (342:1.5){};
\node[u] (v13) at (234:1.5){};
\node (C10) at (0, 0){$C_{10}$};

\draw   (0,0) circle[radius=1cm];
  \draw (v1) -- (v11);
  \draw (v2) -- (v11);   
  \draw (v4) -- (v12);
  \draw (v5) -- (v12);  
  \draw (v7) -- (v13);
  \draw (v8) -- (v13);

\node[u] (w6) at (288:1.4){};
\node[u] (w10) at (144:1.4){};
  \draw (v6) -- (w6);
  \draw (v10) -- (w10);
    
   \node[left=0.05cm, above=0.05cm, font=\small] at (v1) {$v_1$};  
   \node[ right=0.05cm, above=0.05cm,  font=\small] at (v2) {$v_2$};  
   \node[above=0.05cm, font=\small] at (v3) {$v_3$};     
   \node[right=0.05cm,font=\small] at (v4) {$v_4$};     
    \node[below=0.2cm,right=0.01cm, font=\small] at (v5) {$v_5$};    
   \node[right=0.3cm, below=0.01cm, font=\small] at (v6) {$v_6$};      
   \node[right=0.05cm, below=0.05cm,  font=\small] at (v7) {$v_7$};  
   \node[left=0.1cm, font=\small] at (v8) {$v_8$};    
    \node[left=0.05cm,font=\small] at (v9) {$v_9$};  
   \node[left=0.1cm, above=0.05cm,  font=\small] at (v10) {$v_{10}$};        
    \node[above=0.05cm, font=\small] at (v11) {$v_{11}$}; 
    \node[below=0.05cm,  font=\small] at (v12) {$v_{12}$};        
    \node[below=0.05cm, font=\small] at (v13) {$v_{13}$};          
    
 \node[below=0.05cm,  font=\scriptsize] at (v1) {\textcolor{blue}{2}};
\node[below=0.05cm,  font=\scriptsize] at (v2) {\textcolor{blue}{2}};  
\node[left=0.05cm,font=\scriptsize] at (v3) {\textcolor{blue}{3}};  
\node[above=0.05cm, left=0.05cm,font=\scriptsize] at (v4) {\textcolor{blue}{2}};  
 \end{tikzpicture}
 \vfill {\small (c) The list sizes after coloring $v_{11}$\ \ \ \ \ \ }
 \end{center}
       \end{multicols}
\caption{
The graph $H_2$: The list sizes after coloring some vertices $H_2$} 
\label{H2-color}
\end{figure*}
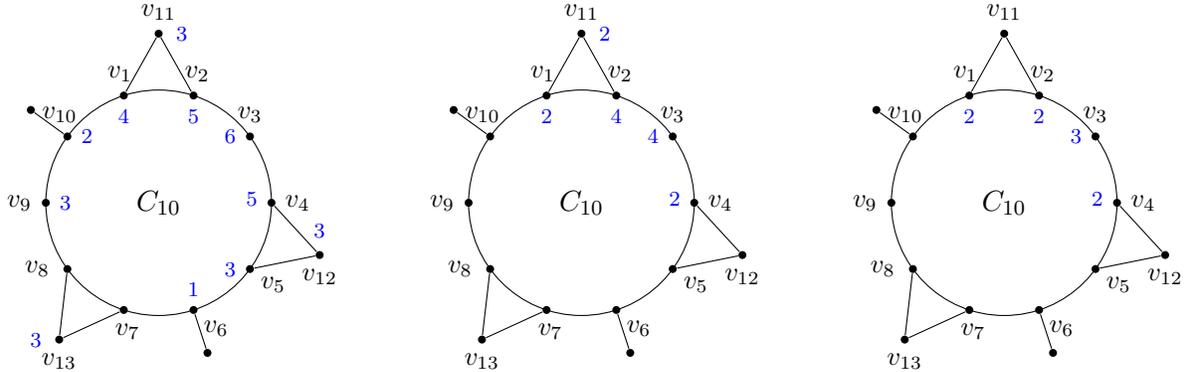

\item \textbf{Case  $H_i = H_3$.}

As indicated in Figure~\ref{H234}, we have:
\[
|L_{H_3}(v_i)| \ge 
\begin{cases}
3, & i = 6, 8, 10, 11, 12, \\
4, & i = 1, 5, 7, 9, \\
5, & i = 2, 4, \\
6, & i = 3.
\end{cases}
\]

Using an argument similar to the case \(H_i = H_2\), there exists a color 
\(\alpha \in L_{H_3}(v_1)\) such that 
\(|L_{H_3}(v_{11}) \setminus \{\alpha\}| \ge 3\).
First color \(v_9\) with a color  
\(c_9 \in L_{H_3}(v_9) \setminus \{\alpha\}\),  
and then color \(v_{10}\) with  
\(c_{10} \in L_{H_3}(v_{10}) \setminus \{\alpha, c_9\}\).  
As indicated by the list sizes in Figure~\ref{H3-color}(a), we can then color  
\(v_8, v_7, v_6, v_5, v_{12}\) in this order.

Let \(L'_{H_3}(v_i)\) denote the list of available colors at  
\(v_i\) for \(i = 1,2,3,4,11\) after coloring  $v_i$ for $i \in \{5,\dots, 10\}\cup \{12\}$.
As shown in Figure~\ref{H3-color}(b),
\[
|L'_{H_3}(v_1)| \ge 2,\qquad
|L'_{H_3}(v_{11})| \ge 2,\qquad
|L'_{H_3}(v_2)| \ge 4,\qquad
|L'_{H_3}(v_3)| \ge 4,\qquad
|L'_{H_3}(v_4)| \ge 2.
\]

Moreover, by the choice of \(c_9\) and \(c_{10}\),
\[
|L'_{H_3}(v_{11}) \setminus \{\alpha\}| 
= |L_{H_3}(v_{11}) \setminus \{\alpha\}| \ge 2,
\qquad\text{and}\qquad
\alpha \in L'_{H_3}(v_1)
   = L_{H_3}(v_1)\setminus \{c_9, c_{10}\}.
\]

Applying the same argument as in the case \(H_i = H_2\),  
we can extend the coloring to all vertices of \(H_3\), yielding an  
\(L\)-coloring of \(G^2\), a contradiction.

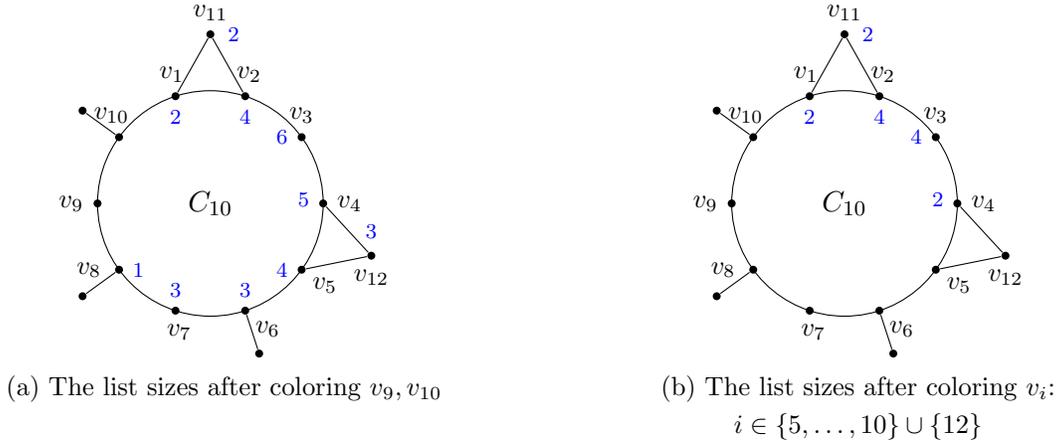
\begin{figure*}[htbp]
\begin{multicols}{2}
\begin{center}
\begin{tikzpicture}
[u/.style={fill=black, minimum size =3pt,ellipse,inner sep=1pt},node distance=1.5cm,scale=1.5]
\node[u] (v1) at (108:1){};
\node[u] (v2) at (72:1){};
\node[u] (v3) at (36:1){};
\node[u] (v4) at (0:1){};
\node[u] (v5) at (324:1){};
\node[u] (v6) at (288:1){};
\node[u] (v7) at (252:1){};
\node[u] (v8) at (216:1){};
\node[u] (v9) at (180:1){};
\node[u] (v10) at (144:1){};
\node[u] (v11) at (90:1.5){};
\node[u] (v12) at (342:1.5){};
\node (C10) at (0, 0){$C_{10}$};

\draw   (0,0) circle[radius=1cm];
  \draw (v1) -- (v11);
  \draw (v2) -- (v11);   
  \draw (v4) -- (v12);
  \draw (v5) -- (v12);

\node[u] (w6) at (288:1.4){};
\node[u] (w8) at (216:1.4){};
\node[u] (w10) at (144:1.4){};
  \draw (v6) -- (w6);
  \draw (v8) -- (w8);  
  \draw (v10) -- (w10);
    
   \node[left=0.05cm, above=0.05cm, font=\small] at (v1) {$v_1$};  
   \node[ right=0.05cm, above=0.05cm,  font=\small] at (v2) {$v_2$};  
   \node[above=0.05cm, font=\small] at (v3) {$v_3$};     
   \node[right=0.05cm,font=\small] at (v4) {$v_4$};     
    \node[below=0.2cm,right=0.01cm, font=\small] at (v5) {$v_5$};    
   \node[right=0.3cm, below=0.01cm, font=\small] at (v6) {$v_6$};      
   \node[right=0.05cm, below=0.05cm,  font=\small] at (v7) {$v_7$};  
   \node[left=0.1cm, font=\small] at (v8) {$v_8$};    
    \node[left=0.05cm,font=\small] at (v9) {$v_9$};  
   \node[left=0.1cm, above=0.05cm,  font=\small] at (v10) {$v_{10}$};        
    \node[above=0.05cm, font=\small] at (v11) {$v_{11}$}; 
    \node[below=0.05cm,  font=\small] at (v12) {$v_{12}$};                      
    
 \node[below=0.05cm,  font=\scriptsize] at (v1) {\textcolor{blue}{2}};
\node[below=0.05cm,  font=\scriptsize] at (v2) {\textcolor{blue}{4}};  
\node[left=0.05cm,font=\scriptsize] at (v3) {\textcolor{blue}{6}};  
\node[above=0.05cm, left=0.05cm,font=\scriptsize] at (v4) {\textcolor{blue}{5}};  
\node[left=0.05cm,font=\scriptsize] at (v5) {\textcolor{blue}{4}};  
\node[above=0.05cm, font=\scriptsize] at (v6) {\textcolor{blue}{3}};  
\node[above=0.05cm, font=\scriptsize] at (v7) {\textcolor{blue}{3}};  
\node[right=0.05cm, font=\scriptsize] at (v8) {\textcolor{blue}{1}};  
\node[right=0.1cm, font=\scriptsize] at (v11) {\textcolor{blue}{2}}; 
\node[above=0.1cm, font=\scriptsize] at (v12) {\textcolor{blue}{3}}; 
 \end{tikzpicture}
       \vfill {\small (a) The list sizes after coloring $v_9,v_{10}$\ \ \ \ \ \ \ \ \ \ \ \ \ \ \ \ \ \ \ \ \ \ \ \ \ \ \ }
 \end{center}
\par
\begin{center}
\begin{tikzpicture}
[u/.style={fill=black, minimum size =3pt,ellipse,inner sep=1pt},node distance=1.5cm,scale=1.5]
\node[u] (v1) at (108:1){};
\node[u] (v2) at (72:1){};
\node[u] (v3) at (36:1){};
\node[u] (v4) at (0:1){};
\node[u] (v5) at (324:1){};
\node[u] (v6) at (288:1){};
\node[u] (v7) at (252:1){};
\node[u] (v8) at (216:1){};
\node[u] (v9) at (180:1){};
\node[u] (v10) at (144:1){};
\node[u] (v11) at (90:1.5){};
\node[u] (v12) at (342:1.5){};
\node (C10) at (0, 0){$C_{10}$};

\draw   (0,0) circle[radius=1cm];
  \draw (v1) -- (v11);
  \draw (v2) -- (v11);   
  \draw (v4) -- (v12);
  \draw (v5) -- (v12);

\node[u] (w6) at (288:1.4){};
\node[u] (w8) at (216:1.4){};
\node[u] (w10) at (144:1.4){};
  \draw (v6) -- (w6);
  \draw (v8) -- (w8);  
  \draw (v10) -- (w10);
    
   \node[left=0.05cm, above=0.05cm, font=\small] at (v1) {$v_1$};  
   \node[ right=0.05cm, above=0.05cm,  font=\small] at (v2) {$v_2$};  
   \node[above=0.05cm, font=\small] at (v3) {$v_3$};     
   \node[right=0.05cm,font=\small] at (v4) {$v_4$};     
    \node[below=0.2cm,right=0.01cm, font=\small] at (v5) {$v_5$};    
   \node[right=0.3cm, below=0.01cm, font=\small] at (v6) {$v_6$};      
   \node[right=0.05cm, below=0.05cm,  font=\small] at (v7) {$v_7$};  
   \node[left=0.1cm, font=\small] at (v8) {$v_8$};    
    \node[left=0.05cm,font=\small] at (v9) {$v_9$};  
   \node[left=0.1cm, above=0.05cm,  font=\small] at (v10) {$v_{10}$};        
    \node[above=0.05cm, font=\small] at (v11) {$v_{11}$}; 
    \node[below=0.05cm,  font=\small] at (v12) {$v_{12}$};                      
    
 \node[below=0.05cm,  font=\scriptsize] at (v1) {\textcolor{blue}{2}};
\node[below=0.05cm,  font=\scriptsize] at (v2) {\textcolor{blue}{4}};  
\node[left=0.05cm,font=\scriptsize] at (v3) {\textcolor{blue}{4}};  
\node[above=0.05cm, left=0.05cm,font=\scriptsize] at (v4) {\textcolor{blue}{2}};  
\node[left=0.05cm,font=\scriptsize] at (v5) {}; 
\node[above=0.05cm, font=\scriptsize] at (v6) {};  
\node[above=0.05cm, font=\scriptsize] at (v7) {};   
\node[right=0.05cm, font=\scriptsize] at (v8) {};
\node[right=0.05cm, font=\scriptsize] at (v9) {};  
\node[right=0.05cm,  font=\scriptsize] at (v10) {};
\node[right=0.1cm, font=\scriptsize] at (v11) {\textcolor{blue}{2}}; 
\node[above=0.1cm, font=\scriptsize] at (v12) {};
 \end{tikzpicture}
       \vfill {\small (b) The list sizes after coloring $v_{i}$: $i \in \{5,\dots, 10\}\cup \{12\}$}
 \end{center}
\end{multicols} 
\caption{The graph $H_3$: The list sizes after coloring some vertices $H_3$} 
\label{H3-color}
\end{figure*}

\item \textbf{Case  $H_i = H_4$.}

As indicated in Figure~\ref{H234}, we have:
\[
|L_{H_4}(v_i)| \ge 
\begin{cases}
3, & i = 2, 4, 6, 8, 10, \\
4, & i = 1, 3, 5, 7, 9.
\end{cases}
\]

We first show that there exist colors 
\(c_1 \in L_{H_4}(v_1)\) and \(c_4 \in L_{H_4}(v_4)\) such that  
\(|L_{H_4}(v_3)\setminus \{c_1, c_4\}| \ge 3\).

If \(L_{H_4}(v_1) \cap L_{H_4}(v_4) \neq \emptyset\),  
choose \(c_1 = c_4 \in L_{H_4}(v_1) \cap L_{H_4}(v_4)\).  
Then
\[
|L_{H_4}(v_3)\setminus \{c_1, c_4\}| 
\ge |L_{H_4}(v_3)| - 1 \ge 3.
\]

If $L_{H_4}(v_1) \cap L_{H_4}(v_4) = \emptyset$, then $|L_{H_4}(v_1) \cup L_{H_4}(v_4)| > |L_{H_4}(v_3)|$.  So, 
$L_{H_4}(v_1)\setminus L_{H_4}(v_3) \neq \emptyset$ or $L_{H_4}(v_4)\setminus L_{H_4}(v_3) \neq \emptyset$.  So, we can color $v_1$ and $v_4$ by $c_1, c_4$, respectively, so that $|L_{H_4}(v_3) \cap \{c_1, c_4\}| \leq 1$. Then, $|L_{H_4}(v_3) \setminus \{c_1, c_4\}| \geq 3$.

Afterward, we can color  
\[
v_2, v_{10}, v_9, v_8, v_6, v_7, v_5, v_3
\]
in this order to obtain an \(L\)-coloring of \(G^2\), a contradiction.  \qed
    \end{enumerate}

\subsection{Reducible Configurations on $9$-Cycles}

Assume that $C$ is a $9$-cycle with $t(C)=d(C)-5=4$.

\begin{claim}\label{claim-Ws}
If $t(C) = 4$, then $G$ contains $F_i$ as a subgraph for some $i \in \{1,2,\dots,12\}$.
\end{claim}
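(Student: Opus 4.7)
The plan is to mirror the analysis carried out for $10$-cycles in the preceding subsection. I would apply Claim~\ref{cl:t(f)} to the $9$-cycle $C$ with $t(C)=4$: since $d(C)=9\neq10$, Claim~\ref{cl:t(f)}(1) rules out $k=|A(C)|=0$; and if $k=1$, then Claim~\ref{cl:t(f)}(2) already yields $F_{12}$, namely the single $S_4$-segment with $l=4$ wrapping around $C$ (one $z$-vertex and four consecutive triangles sharing edges with $C$). It therefore remains to handle $k\geq 2$ and to show that every admissible segment profile either contains $T_3$ (contradicting Claim~\ref{cl:reducible}) or is isomorphic to some $F_i$ with $i\in\{1,\ldots,11\}$.

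For $k\geq 2$, Claim~\ref{cl:t(f)}(3-2) gives $s_4+s_5=5-t(C)=1$, so exactly one segment is of type $S_4$ or $S_5$; and (3-1) becomes $s_1+2s_2+3s_3+l_1=4$, where $l_1=0$ if $s_5=1$ and $l_1\geq 1$ if $s_4=1$. I would split accordingly. When $s_5=1$, the equation $s_1+2s_2+3s_3=4$ admits the four solutions $(s_1,s_2,s_3)\in\{(4,0,0),(2,1,0),(0,2,0),(1,0,1)\}$, with $k=5,4,3,3$, respectively. When $s_4=1$, the equation $s_1+2s_2+3s_3=4-l_1$ admits, for $l_1\in\{1,2,3\}$, the six triples $(3,0,0),(1,1,0),(0,0,1)$ for $l_1=1$; $(2,0,0),(0,1,0)$ for $l_1=2$; and $(1,0,0)$ for $l_1=3$ (the value $l_1=4$ collapses to $k=1$, already handled). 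This gives ten profiles, and together with $F_{12}$ will account for all the listed $F_i$.

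For each profile I would enumerate the cyclic arrangements of its segments up to rotation and reflection, and for each arrangement inspect the boundary between consecutive segments. The critical observation is that whenever an $S_2$-segment is placed adjacent to a $2$-vertex-bearing segment ($S_1$, $S_2$, $S_3$, or $S_4$), the internal orientation of $S_2$ (triangle near which endpoint) determines whether the forbidden pattern of $T_3$ (triangle, $2$-vertex, non-triangle $3$-vertex, $2$-vertex) appears across the shared boundary; analogous checks rule out $T_1$ across pairs of $2$-vertex-bearing segments, and $T_2$ across the triangle-chain at an $S_4$-boundary. After discarding arrangements containing any $T_i$, the surviving ones correspond bijectively to the configurations $F_1,\ldots,F_{11}$.

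The main obstacle will be the bookkeeping itself: ten profiles, each with several cyclic arrangements and two internal orientations per $S_2$-segment, yields a long case analysis. The most delicate subcases are the profile $(0,2,0)$ with $s_5=1$, where the two $S_2$-segments must be simultaneously oriented to avoid $T_3$ at their mutual boundary while still being compatible with the unique $S_5$-gap, and the profiles $s_4=1$ with $l_1\geq 2$, where the long triangle-chain of $S_4$ forces the remaining non-$S_5$ segment into a single cyclic slot and the triangle-triangle-$2$-vertex pattern of $T_2$ must be ruled out at the $S_4$-boundary.
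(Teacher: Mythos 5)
Your proposal follows essentially the same route as the paper: both apply Claim~\ref{cl:t(f)} to get $s_4+s_5=1$ and $s_1+2s_2+3s_3+l_1=4$ for $k\ge 2$, enumerate the same ten admissible profiles (plus the $k=1$ case giving $F_{12}$), and match them to $F_1,\dots,F_{11}$, with the profile $(s_1,s_2,s_3,s_4,s_5)=(0,2,0,0,1)$ yielding the two configurations $F_6$ and $F_7$. The only difference is organizational (you case-split on $(s_4,s_5)$ and $l_1$ where the paper splits on $k$) and that you make explicit the check that cyclic arrangements and $S_2$-orientations containing $T_3$ must be discarded, a verification the paper leaves implicit.
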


\begin{proof}
If $k = |A(C)| \le 1$, then by Claim~\ref{cl:t(f)} we must have $k=1$, and $G$ contains $F_{12}$ in Figure~\ref{W12-subgraph} as a subgraph.  
Thus we assume $k \ge 2$.  
By Claim~\ref{cl:t(f)}(2), we have:
\begin{itemize}
\item $s_4 + s_5 = 1$, implying $(s_4,s_5) \in \{(1,0),(0,1)\}$, 

\item $s_1 + s_2 + s_3 = k-1$,

\item $4 = s_1 + 2s_2 + 3s_3 + l_1,\quad 0 \le l_1 \le 1.$
\end{itemize}

\medskip
\noindent\textbf{Case $k=2$}.

Here $s_1 + s_2 + s_3 = 1$.  
From \(4 = s_1 + 2s_2 + 3s_3 + l_1\), we must have \(l_1 \ge 1\), and hence $(s_4,s_5)=(1,0)$.  
Thus
\[
(s_1,s_2,s_3,s_4,s_5) \in \{(1,0,0,1,0),\ (0,1,0,1,0),\ (0,0,1,1,0)\}.
\]

If $s_1=1$, then $l_1=3$ and $G$ contains  the configuration $F_{11}$. 
 
If $s_2=1$, then $l_1=2$ and $G$ contains the configuration $F_{10}$.  

If $s_3=1$, then $l_1=1$ and $G$ contains the configuration $F_9$.

\medskip
\noindent\textbf{Case $k=3$}.

Then $s_1 + s_2 + s_3 = 2$ and $4 = s_1 + 2s_2 + 3s_3 + l_1$.

\medskip\noindent
\emph{Subcase} $(s_4,s_5) = (0,1)$.  
Then $l_1=0$, so $s_1 + 2s_2 + 3s_3 =4$, giving $s_2 + 2s_3 =2$.  
Thus $(s_2,s_3)=(0,1)$ or $(2,0)$.

If $(s_2,s_3)=(0,1)$, then $(s_1,s_2,s_3,s_4,s_5) = (1,0,1,0,1)$ and $G$ contains  the configuration $F_8$.
  
If $(s_2,s_3)=(2,0)$, then $(s_1,s_2,s_3,s_4,s_5) = (0,2,0,0,1)$ and $G$ contains the configuration $F_6$ or  $F_7$.

\medskip\noindent
\emph{Subcase} $(s_4,s_5) = (1,0)$.  
Then $l_1 \ge 1$.  
Since $s_1 + s_2 + s_3 = 2$, we have $s_2 + 2s_3 + l_1 = 2$, which forces $s_3=0$.  
Hence
\[
s_1 + s_2 = 2,\qquad s_2 + l_1 = 2.
\]
Since $l_1 \ge 1$, we have 
\[
(s_1,s_2,l_1)=(2,0,2) \quad\text{or}\quad (1,1,1).
\]
Thus, 
$(s_1,s_2,s_3,s_4,s_5)=(2,0,0,1,0)$ giving  the configuration  $F_5$, or  $(s_1,s_2,s_3,s_4,s_5)=(1,1,0,1,0)$ giving the configuration $F_4$.

\medskip
\noindent\textbf{Case $k=4$}. Then $s_1 + s_2 + s_3 = 3$ and   $4 = s_1 + 2s_2 + 3s_3 + l_1$.

Thus   $s_2 + 2s_3 + l_1 = 1$ which implies  $s_3=0$,  $s_1 + s_2 = 3$ and $s_2 + l_1 = 1$.

If $(s_4,s_5)=(0,1)$, then $l_1=0$, so $s_2=1$ and $s_1=2$, giving $(s_1,s_2,s_3,s_4,s_5) = (2,1,0,0,1)$ and $G$ contains the configuration  $F_3$.

If $(s_4,s_5)=(1,0)$, then $l_1=1$, so $s_2=0$ and $s_1=3$, giving $(s_1,s_2,s_3,s_4,s_5) = (3,0,0,1,0)$ and $G$ contains the configuration $F_2$.

\medskip
\noindent\textbf{Case $k=5$}.  Then $s_1 + s_2 + s_3 = 4$ and  
$4 = s_1 + 2s_2 + 3s_3 + l_1$.

This forces $s_2 = s_3 = l_1 = 0$ and $s_1 = 4$.  
Since $l_1=0$, we have $(s_4,s_5) = (0,1)$, giving $(s_1,s_2,s_3,s_4,s_5) =(4,0,0,0,1)$, and $G$ contains $F_1$.

\medskip
In summary, we have shown that $G$ contains $F_i$ for some $i \in \{1,2,\dots,12\}$.
\end{proof}

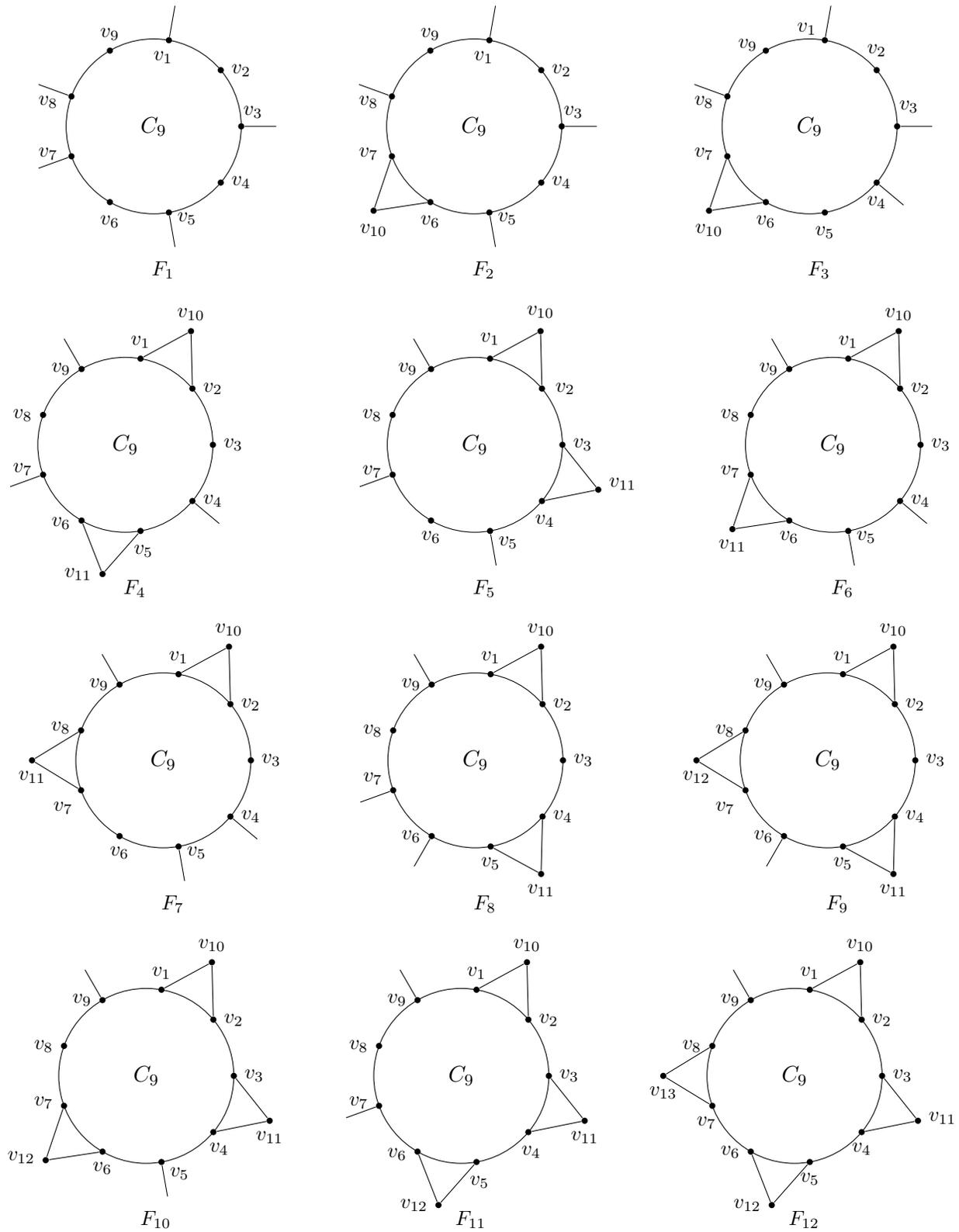
\begin{figure}
\begin{center}
\subfigure{\begin{tikzpicture}
[u/.style={fill=black, minimum size =3pt,ellipse,inner sep=1pt},node distance=1.5cm,scale=1.5]
\node[u] (v1) at (80:1){};
\node[u] (v2) at (40:1){};
\node[u] (v3) at (0:1){};
\node[u] (v4) at (320:1){};
\node[u] (v5) at (280:1){};
\node[u] (v6) at (240:1){};
\node[u] (v7) at (200:1){};
\node[u] (v8) at (160:1){};
\node[u] (v9) at (120:1){};
\node (C9) at (0, 0){$C_9$};

\draw   (0,0) circle[radius=1cm];
  
  \draw (v1) -- (80:1.4);
  \draw (v3) -- (0:1.4);
  \draw (v5) -- (280:1.4);
  \draw (v7) -- (200:1.4);
  \draw (v8) -- (160:1.4); 
    
   \node[left=0.1cm,  below=0.05cm,  font=\small] at (v1) {$v_1$};  
   \node[right=0.05cm,font=\small] at (v2) {$v_2$};  
   \node[right=0.2cm,above= 0.01cm, font=\small] at (v3) {$v_3$};       
   \node[right=0.05cm,font=\small] at (v4) {$v_4$};     
    \node[below=0.1cm,right=0.01cm, font=\small] at (v5) {$v_5$};    
   \node[below=0.1cm,font=\small] at (v6) {$v_6$};      
   \node[above=0.05cm, left=0.1cm, font=\small] at (v7) {$v_7$};  
   \node[below=0.1cm, left=0.1cm, font=\small] at (v8) {$v_8$};    
    \node[above=0.05cm,font=\small] at (v9) {$v_9$};      
    
  \node[left=0.1cm,below=0.7cm,font=\small] at (v5) {$F_1$};
 \end{tikzpicture}}
~~~~~~~~ \subfigure{\begin{tikzpicture}
[u/.style={fill=black, minimum size =3pt,ellipse,inner sep=1pt},node distance=1.5cm,scale=1.5]
\node[u] (v1) at (80:1){};
\node[u] (v2) at (40:1){};
\node[u] (v3) at (0:1){};
\node[u] (v4) at (320:1){};
\node[u] (v5) at (280:1){};
\node[u] (v6) at (240:1){};
\node[u] (v7) at (200:1){};
\node[u] (v8) at (160:1){};
\node[u] (v9) at (120:1){};
\node[u] (v10) at (220:1.5){};
\node (C9) at (0, 0){$C_9$};

\draw   (0,0) circle[radius=1cm];
  \draw (v6) -- (v10);
  \draw (v7) -- (v10);      
  
  \draw (v1) -- (80:1.4);
  \draw (v3) -- (0:1.4);
  \draw (v5) -- (280:1.4);
  \draw (v8) -- (160:1.4); 
    
   \node[left=0.1cm,  below=0.05cm,  font=\small] at (v1) {$v_1$};  
   \node[right=0.05cm,font=\small] at (v2) {$v_2$};  
   \node[right=0.2cm,above= 0.01cm, font=\small] at (v3) {$v_3$};      
   \node[right=0.05cm,font=\small] at (v4) {$v_4$};     
    \node[below=0.1cm,right=0.01cm, font=\small] at (v5) {$v_5$};    
   \node[below=0.1cm,font=\small] at (v6) {$v_6$};      
   \node[above=0.05cm, left=0.1cm, font=\small] at (v7) {$v_7$};  
   \node[below=0.1cm, left=0.1cm, font=\small] at (v8) {$v_8$};    
    \node[above=0.05cm,font=\small] at (v9) {$v_9$};      
     
     \node[below=0.1cm,  font=\small] at (v10) {$v_{10}$};

\node[left=0.1cm,below=0.7cm,font=\small] at (v5) {$F_2$};
 \end{tikzpicture}}
~~~~~~~~~~~\subfigure{\begin{tikzpicture}
[u/.style={fill=black, minimum size =3pt,ellipse,inner sep=1pt},node distance=1.5cm,scale=1.5]
\node[u] (v1) at (80:1){};
\node[u] (v2) at (40:1){};
\node[u] (v3) at (0:1){};
\node[u] (v4) at (320:1){};
\node[u] (v5) at (280:1){};
\node[u] (v6) at (240:1){};
\node[u] (v7) at (200:1){};
\node[u] (v8) at (160:1){};
\node[u] (v9) at (120:1){};
\node[u] (v10) at (220:1.5){};
\node (C9) at (0, 0){$C_9$};

\draw   (0,0) circle[radius=1cm];
  
  \draw (v1) -- (80:1.4);
  \draw (v3) -- (0:1.4);
  \draw (v4) -- (320:1.4);
  \draw (v8) -- (160:1.4); 
  \draw (v6) -- (v10);
  \draw (v7) -- (v10);   
    
   \node[above=0.2cm, left=0.01cm, font=\small] at (v1) {$v_1$};  
   \node[above=0.1cm,font=\small] at (v2) {$v_2$};  
   \node[right=0.2cm,above= 0.1cm, font=\small] at (v3) {$v_3$};     
   \node[below=0.1cm,font=\small] at (v4) {$v_4$};     
    \node[below=0.1cm, font=\small] at (v5) {$v_5$};    
   \node[below=0.1cm,font=\small] at (v6) {$v_6$};      
   \node[above=0.05cm, left=0.1cm, font=\small] at (v7) {$v_7$};  
   \node[below=0.1cm, left=0.1cm, font=\small] at (v8) {$v_8$};    
    \node[above=0.1cm, left=0.01cm, font=\small] at (v9) {$v_9$};  
     \node[below=0.1cm,font=\small] at (v10) {$v_{10}$};

\node[left=0.1cm,below=0.7cm,font=\small] at (v5) {$F_3$}; 
 \end{tikzpicture}}
~~~~~~~~\subfigure{\begin{tikzpicture}
[u/.style={fill=black, minimum size =3pt,ellipse,inner sep=1pt},node distance=1.5cm,scale=1.5]
\node[u] (v1) at (80:1){};
\node[u] (v2) at (40:1){};
\node[u] (v3) at (0:1){};
\node[u] (v4) at (320:1){};
\node[u] (v5) at (280:1){};
\node[u] (v6) at (240:1){};
\node[u] (v7) at (200:1){};
\node[u] (v8) at (160:1){};
\node[u] (v9) at (120:1){};
\node[u] (v10) at (60:1.5){};
\node[u] (v11) at (260:1.5){};
\node (C9) at (0, 0){$C_9$};

\draw   (0,0) circle[radius=1cm];
  \draw (v1) -- (v10);
  \draw (v2) -- (v10);  
  \draw (v5) -- (v11);
  \draw (v6) -- (v11);


  \draw (v4) -- (320:1.4);
  \draw (v7) -- (200:1.4); 
 \draw (v9) -- (120:1.4);   
    
   \node[above=0.05cm, font=\small] at (v1) {$v_1$};  
   \node[right=0.05cm,font=\small] at (v2) {$v_2$};  
   \node[right=0.05cm, font=\small] at (v3) {$v_3$};     
   \node[right=0.05cm,font=\small] at (v4) {$v_4$};     
    \node[right=0.05cm, below=0.1cm, font=\small] at (v5) {$v_5$};    
   \node[below=0.05cm,left=0.05cm, font=\small] at (v6) {$v_6$};      
   \node[above=0.1cm, left=0.05cm, font=\small] at (v7) {$v_7$};  
   \node[left=0.05cm, font=\small] at (v8) {$v_8$};    
    \node[left=0.05cm, font=\small] at (v9) {$v_9$};  
   \node[above=0.05cm,  font=\small] at (v10) {$v_{10}$};        
    \node[left=0.05cm,  font=\small] at (v11) {$v_{11}$};

\node[left=0.1cm,below=0.7cm,font=\small] at (v5) {$F_4$}; 
 \end{tikzpicture}}
~~~~~~~~~~~~~\subfigure{\begin{tikzpicture}
[u/.style={fill=black, minimum size =3pt,ellipse,inner sep=1pt},node distance=1.5cm,scale=1.5]
\node[u] (v1) at (80:1){};
\node[u] (v2) at (40:1){};
\node[u] (v3) at (0:1){};
\node[u] (v4) at (320:1){};
\node[u] (v5) at (280:1){};
\node[u] (v6) at (240:1){};
\node[u] (v7) at (200:1){};
\node[u] (v8) at (160:1){};
\node[u] (v9) at (120:1){};
\node[u] (v10) at (60:1.5){};
\node[u] (v11) at (340:1.5){};
\node (C9) at (0, 0){$C_9$};

\draw   (0,0) circle[radius=1cm];
  \draw (v1) -- (v10);
  \draw (v2) -- (v10);  
  \draw (v3) -- (v11);
  \draw (v4) -- (v11);  
        
  \draw (v5) -- (280:1.4);
  \draw (v7) -- (200:1.4); 
 \draw (v9) -- (120:1.4);   
    
   \node[above=0.1cm, font=\small] at (v1) {$v_1$};  
   \node[right=0.1cm,font=\small] at (v2) {$v_2$};  
   \node[right=0.05cm, font=\small] at (v3) {$v_3$};     
   \node[right=0.05cm, below=0.1cm,font=\small] at (v4) {$v_4$};     
    \node[below=0.2cm,right=0.01cm, font=\small] at (v5) {$v_5$};    
   \node[below=0.05cm,font=\small] at (v6) {$v_6$};      
   \node[above=0.05cm, left=0.05cm, font=\small] at (v7) {$v_7$};  
   \node[left=0.05cm, font=\small] at (v8) {$v_8$};    
    \node[left=0.1cm,font=\small] at (v9) {$v_9$};  
   \node[above=0.1cm,  font=\small] at (v10) {$v_{10}$};        
    \node[above=0.1cm, right=0.05cm, font=\small] at (v11) {$v_{11}$};

\node[left=0.1cm,below=0.7cm,font=\small] at (v5) {$F_5$}; 
 \end{tikzpicture}}
~~~~~~~~\subfigure{\begin{tikzpicture}
[u/.style={fill=black, minimum size =3pt,ellipse,inner sep=1pt},node distance=1.5cm,scale=1.5]
\node[u] (v1) at (80:1){};
\node[u] (v2) at (40:1){};
\node[u] (v3) at (0:1){};
\node[u] (v4) at (320:1){};
\node[u] (v5) at (280:1){};
\node[u] (v6) at (240:1){};
\node[u] (v7) at (200:1){};
\node[u] (v8) at (160:1){};
\node[u] (v9) at (120:1){};
\node[u] (v10) at (60:1.5){};
\node[u] (v11) at (220:1.5){};
\node (C9) at (0, 0){$C_9$};

\draw   (0,0) circle[radius=1cm];
  \draw (v1) -- (v10);
  \draw (v2) -- (v10);   
  \draw (v6) -- (v11);
  \draw (v7) -- (v11);

  \draw (v4) -- (320:1.4);
  \draw (v5) -- (280:1.4);
 \draw (v9) -- (120:1.4);   
    
   \node[above=0.05cm, font=\small] at (v1) {$v_1$};  
   \node[right=0.05cm,font=\small] at (v2) {$v_2$};  
   \node[right=0.05cm, font=\small] at (v3) {$v_3$};     
   \node[right=0.05cm,font=\small] at (v4) {$v_4$};     
    \node[below=0.1cm,right=0.01cm, font=\small] at (v5) {$v_5$};    
   \node[below=0.05cm,font=\small] at (v6) {$v_6$};      
   \node[above=0.05cm, left=0.05cm, font=\small] at (v7) {$v_7$};  
   \node[left=0.05cm, font=\small] at (v8) {$v_8$};    
    \node[left=0.05cm,font=\small] at (v9) {$v_9$};  
   \node[above=0.05cm,  font=\small] at (v10) {$v_{10}$};        
    \node[below=0.05cm, font=\small] at (v11) {$v_{11}$};

  \node[left=0.1cm,below=0.7cm, font=\small] at (v5) {$F_6$};
 \end{tikzpicture}}
~~~~~~~~\subfigure{\begin{tikzpicture}
[u/.style={fill=black, minimum size =3pt,ellipse,inner sep=1pt},node distance=1.5cm,scale=1.5]
\node[u] (v1) at (80:1){};
\node[u] (v2) at (40:1){};
\node[u] (v3) at (0:1){};
\node[u] (v4) at (320:1){};
\node[u] (v5) at (280:1){};
\node[u] (v6) at (240:1){};
\node[u] (v7) at (200:1){};
\node[u] (v8) at (160:1){};
\node[u] (v9) at (120:1){};
\node[u] (v10) at (60:1.5){};
\node[u] (v11) at (180:1.5){};
\node (C9) at (0, 0){$C_9$};

\draw   (0,0) circle[radius=1cm];
  \draw (v1) -- (v10);
  \draw (v2) -- (v10);   
  \draw (v7) -- (v11);
  \draw (v8) -- (v11);

  \draw (v4) -- (320:1.4);
  \draw (v5) -- (280:1.4);
 \draw (v9) -- (120:1.4);   
    
   \node[above=0.05cm, font=\small] at (v1) {$v_1$};  
   \node[right=0.05cm, font=\small] at (v2) {$v_2$};  
   \node[right=0.05cm, font=\small] at (v3) {$v_3$};     
   \node[right=0.05cm, font=\small] at (v4) {$v_4$};     
    \node[below=0.1cm, right=0.01cm, font=\small] at (v5) {$v_5$};    
   \node[below=0.05cm, font=\small] at (v6) {$v_6$};      
   \node[below=0.3cm, left=0.01cm, font=\small] at (v7) {$v_7$};  
   \node[above=0.05cm, left=0.05cm, font=\small] at (v8) {$v_8$};    
    \node[left=0.05cm, font=\small] at (v9) {$v_9$};  
   \node[above=0.05cm,  font=\small] at (v10) {$v_{10}$};        
    \node[below=0.05cm, font=\small] at (v11) {$v_{11}$};    

\node[left=0.1cm,below=0.7cm, font=\small]at (v5) {$F_7$};
 \end{tikzpicture}}
~~~~~~~~\subfigure{\begin{tikzpicture}
[u/.style={fill=black, minimum size =3pt,ellipse,inner sep=1pt},node distance=1.5cm,scale=1.5]
\node[u] (v1) at (80:1){};
\node[u] (v2) at (40:1){};
\node[u] (v3) at (0:1){};
\node[u] (v4) at (320:1){};
\node[u] (v5) at (280:1){};
\node[u] (v6) at (240:1){};
\node[u] (v7) at (200:1){};
\node[u] (v8) at (160:1){};
\node[u] (v9) at (120:1){};
\node[u] (v10) at (60:1.5){};
\node[u] (v11) at (300:1.5){};
\node (C9) at (0, 0){$C_9$};

\draw   (0,0) circle[radius=1cm];
  \draw (v1) -- (v10);
  \draw (v2) -- (v10);  
  \draw (v4) -- (v11);
  \draw (v5) -- (v11);

  \draw (v6) -- (240:1.4); 
  \draw (v7) -- (200:1.4); 
 \draw (v9) -- (120:1.4);   
    
   \node[above=0.05cm, font=\small] at (v1) {$v_1$};  
   \node[right=0.05cm,font=\small] at (v2) {$v_2$};  
   \node[right=0.05cm, font=\small] at (v3) {$v_3$};     
   \node[right=0.05cm, font=\small] at (v4) {$v_4$};     
    \node[below=0.05cm,  font=\small] at (v5) {$v_5$};    
   \node[left=0.05cm,font=\small] at (v6) {$v_6$};      
   \node[above=0.2cm, left=0.05cm, font=\small] at (v7) {$v_7$};  
   \node[left=0.05cm, font=\small] at (v8) {$v_8$};    
    \node[left=0.05cm,font=\small] at (v9) {$v_9$};  
   \node[above=0.05cm,  font=\small] at (v10) {$v_{10}$};        
    \node[below=0.05cm, font=\small] at (v11) {$v_{11}$};

\node[left=0.1cm,below=0.7cm, font=\small] at (v5) {$F_8$}; 
 \end{tikzpicture}}
~~~~~~~~ \subfigure{\begin{tikzpicture}
[u/.style={fill=black, minimum size =3pt,ellipse,inner sep=1pt},node distance=1.5cm,scale=1.5]
\node[u] (v1) at (80:1){};
\node[u] (v2) at (40:1){};
\node[u] (v3) at (0:1){};
\node[u] (v4) at (320:1){};
\node[u] (v5) at (280:1){};
\node[u] (v6) at (240:1){};
\node[u] (v7) at (200:1){};
\node[u] (v8) at (160:1){};
\node[u] (v9) at (120:1){};
\node[u] (v10) at (60:1.5){};
\node[u] (v11) at (300:1.5){};
\node[u] (v12) at (180:1.5){};
\node (C9) at (0, 0){$C_9$};

\draw   (0,0) circle[radius=1cm];
  \draw (v1) -- (v10);
  \draw (v2) -- (v10);  
  \draw (v4) -- (v11);
  \draw (v5) -- (v11); 

  \draw (v7) -- (v12);
  \draw (v8) -- (v12);

  \draw (v6) -- (240:1.4); 
 \draw (v9) -- (120:1.4);   
    
   \node[above=0.05cm, font=\small] at (v1) {$v_1$};  
   \node[right=0.05cm,font=\small] at (v2) {$v_2$};  
   \node[right=0.05cm, font=\small] at (v3) {$v_3$};     
   \node[right=0.05cm,font=\small] at (v4) {$v_4$};     
    \node[below=0.05cm,  font=\small] at (v5) {$v_5$};    
   \node[left=0.05cm,font=\small] at (v6) {$v_6$};      
   \node[below=0.3cm, left=0.05cm, font=\small] at (v7) {$v_7$};  
   \node[left=0.05cm, font=\small] at (v8) {$v_8$};    
    \node[left=0.05cm,font=\small] at (v9) {$v_9$};  
   \node[above=0.05cm,  font=\small] at (v10) {$v_{10}$};        
    \node[below=0.05cm, font=\small] at (v11) {$v_{11}$};   
    \node[below=0.05cm,  font=\small] at (v12) {$v_{12}$};    

\node[left=0.1cm,below=0.7cm, font=\small] at (v5) { $F_9$};
 \end{tikzpicture}}
~~~~\subfigure{\begin{tikzpicture}
[u/.style={fill=black, minimum size =3pt,ellipse,inner sep=1pt},node distance=1.5cm,scale=1.5]
\node[u] (v1) at (80:1){};
\node[u] (v2) at (40:1){};
\node[u] (v3) at (0:1){};
\node[u] (v4) at (320:1){};
\node[u] (v5) at (280:1){};
\node[u] (v6) at (240:1){};
\node[u] (v7) at (200:1){};
\node[u] (v8) at (160:1){};
\node[u] (v9) at (120:1){};
\node[u] (v10) at (60:1.5){};
\node[u] (v11) at (340:1.5){};
\node[u] (v12) at (220:1.5){};
\node (C9) at (0, 0){$C_9$};

\draw   (0,0) circle[radius=1cm];
  \draw (v1) -- (v10);
  \draw (v2) -- (v10);  
  \draw (v3) -- (v11);
  \draw (v4) -- (v11);  
  \draw (v6) -- (v12);
  \draw (v7) -- (v12);

  \draw (v5) -- (280:1.4);
 \draw (v9) -- (120:1.4);   
 
   \node[above=0.05cm, font=\small] at (v1) {$v_1$};  
   \node[right=0.05cm,font=\small] at (v2) {$v_2$};  
   \node[right=0.05cm, font=\small] at (v3) {$v_3$};     
   \node[right=0.1cm, below=0.05cm, font=\small] at (v4) {$v_4$};     
    \node[below=0.2cm,right=0.01cm, font=\small] at (v5) {$v_5$};    
   \node[below=0.05cm,font=\small] at (v6) {$v_6$};      
   \node[above=0.05cm, left=0.05cm, font=\small] at (v7) {$v_7$};  
   \node[left=0.05cm, font=\small] at (v8) {$v_8$};    
    \node[left=0.05cm,font=\small] at (v9) {$v_9$};  
   \node[above=0.05cm,  font=\small] at (v10) {$v_{10}$};        
    \node[below=0.05cm,  font=\small] at (v11) {$v_{11}$};    
     \node[above=0.05cm, left=0.05cm, font=\small] at (v12) {$v_{12}$};                  
  \node[left=0.1cm,below=0.7cm,font=\small] at (v5) { $F_{10}$};
 \end{tikzpicture}}
~~~~~~\subfigure{\begin{tikzpicture}
[u/.style={fill=black, minimum size =3pt,ellipse,inner sep=1pt},node distance=1.5cm,scale=1.5]
\node[u] (v1) at (80:1){};
\node[u] (v2) at (40:1){};
\node[u] (v3) at (0:1){};
\node[u] (v4) at (320:1){};
\node[u] (v5) at (280:1){};
\node[u] (v6) at (240:1){};
\node[u] (v7) at (200:1){};
\node[u] (v8) at (160:1){};
\node[u] (v9) at (120:1){};
\node[u] (v10) at (60:1.5){};
\node[u] (v11) at (340:1.5){};
\node[u] (v12) at (260:1.5){};
\node (C9) at (0, 0){$C_9$};

\draw   (0,0) circle[radius=1cm];
  \draw (v1) -- (v10);
  \draw (v2) -- (v10);  
  \draw (v3) -- (v11);
  \draw (v4) -- (v11); 

  \draw (v5) -- (v12);
  \draw (v6) -- (v12);  

  \draw (v7) -- (200:1.4); 
 \draw (v9) -- (120:1.4);   
    
   \node[above=0.05cm, font=\small] at (v1) {$v_1$};  
   \node[right=0.05cm,font=\small] at (v2) {$v_2$};  
   \node[right=0.05cm, font=\small] at (v3) {$v_3$};  
   \node[right=0.05cm, below=0.1cm, font=\small] at (v4) {$v_4$};     
    \node[right=0.05cm, below=0.1cm, font=\small] at (v5) {$v_5$};    
   \node[below=0.05cm, left=0.05cm, font=\small] at (v6) {$v_6$};      
   \node[above=0.05cm, left=0.05cm, font=\small] at (v7) {$v_7$};  
   \node[left=0.05cm, font=\small] at (v8) {$v_8$};    
    \node[left=0.05cm,font=\small] at (v9) {$v_9$};  
   \node[above=0.05cm,  font=\small] at (v10) {$v_{10}$};        
    \node[below=0.05cm, font=\small] at (v11) {$v_{11}$};   
    \node[left=0.05cm,  font=\small] at (v12) {$v_{12}$};                              
    
\node[left=0.1cm,below=0.7cm, font=\small] at (v5) {$F_{11}$}; 
 \end{tikzpicture}}
~~~~\subfigure{\begin{tikzpicture}
[u/.style={fill=black, minimum size =3pt,ellipse,inner sep=1pt},node distance=1.5cm,scale=1.5]
\node[u] (v1) at (80:1){};
\node[u] (v2) at (40:1){};
\node[u] (v3) at (0:1){};
\node[u] (v4) at (320:1){};
\node[u] (v5) at (280:1){};
\node[u] (v6) at (240:1){};
\node[u] (v7) at (200:1){};
\node[u] (v8) at (160:1){};
\node[u] (v9) at (120:1){};
\node[u] (v10) at (60:1.5){};
\node[u] (v11) at (340:1.5){};
\node[u] (v12) at (260:1.5){};
\node[u] (v13) at (180:1.5){};
\node (C9) at (0, 0){$C_9$};

\draw   (0,0) circle[radius=1cm];
  \draw (v1) -- (v10);
  \draw (v2) -- (v10);  
  \draw (v3) -- (v11);
  \draw (v4) -- (v11); 

  \draw (v5) -- (v12);
  \draw (v6) -- (v12);  
  \draw (v7) -- (v13);
  \draw (v8) -- (v13);     

 \draw (v9) -- (120:1.4);   
    
   \node[above=0.05cm, font=\small] at (v1) {$v_1$};  
   \node[right=0.05cm, font=\small] at (v2) {$v_2$};  
   \node[right=0.05cm, font=\small] at (v3) {$v_3$};     
   \node[below=0.05cm,font=\small] at (v4) {$v_4$};     
    \node[below=0.05cm,  font=\small] at (v5) {$v_5$};    
   \node[left=0.05cm, font=\small] at (v6) {$v_6$};      
   \node[left=0.1cm, below=0.05cm, font=\small] at (v7) {$v_7$};  
   \node[left=0.05cm, font=\small] at (v8) {$v_8$};    
    \node[left=0.05cm, font=\small] at (v9) {$v_9$};  
   \node[above=0.05cm,  font=\small] at (v10) {$v_{10}$};        
    \node[above=0.05cm, right=0.05cm, font=\small] at (v11) {$v_{11}$};   
    \node[left=0.05cm,  font=\small] at (v12) {$v_{12}$};    
     \node[below=0.05cm, font=\small] at (v13) {$v_{13}$};                             
      
\node[left=0.1cm,below=0.7cm, font=\small] at (v5) {$F_{12}$}; 
 \end{tikzpicture}}
\end{center}
 \caption{Reducible configurations on $9$-cycles.} 
\label{W12-subgraph}
  \end{figure}
  
  In the next two subsections, we show that $G$ does not contain $F_i$ as a subgraph for any $i \in \{1,\ldots,12\}$, thereby completing the proof of Claim~\ref{lemma-C10}.

For $i \in \{1,4,6,7,8,11\}$, in Subsection~\ref{subsection-F-one}, we use coloring extension arguments to show that $F_i$ cannot occur in $G$.
For $i \in \{2,3,5,9,10,12\}$, in Subsection~\ref{subsection-F-two}, we apply the Combinatorial Nullstellensatz to rule out $F_i$ as a subgraph of $G$, as the corresponding coloring extension arguments would be considerably longer and more involved.
\subsubsection{$G$ does not contain $F_i$ for $i \in \{1,4,6,7,8,11\}$}
\label{subsection-F-one}

In this subsection, we apply the coloring extension technique to show that $F_i$ does not appear in $G$ for any $i \in \{1,4,6,7,8,11\}$.

\begin{claim} \label{claim-F-one}
$G$ does not contain $F_i$ as a subgraph for each $i \in \{1,4,6,7,8,11\}$.
\end{claim}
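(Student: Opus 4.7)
The plan is to handle each configuration $F_i$ for $i \in \{1,4,6,7,8,11\}$ by the same coloring-extension template used for $H_1$ through $H_4$ in Claim~\ref{lem-H234}. Assume toward a contradiction that $G$ contains $F_i$ as a subgraph. By the minimality of $G$, there is an $L$-coloring $\phi$ of $(G - V(F_i))^2$, and for every $v \in V(F_i)$ set
\[
L_{F_i}(v) \;=\; L(v) \setminus \{\phi(x) : vx \in E(G^2),\ x \notin V(F_i)\}.
\]
Since $G$ is subcubic, each vertex $v \in V(F_i)$ loses at most one color from $\phi$ per neighbor outside $V(F_i)$, so the list sizes $|L_{F_i}(v)|$ can be read off directly from the local picture. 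The goal is to extend $\phi$ to all of $V(F_i)$, contradicting the minimality of $G$.

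The extension strategy depends on the configuration. For $F_1$, which has four $2$-vertices and five $3$-vertices on the $9$-cycle and no attached triangles, the cycle $3$-vertices retain comparatively large lists and each $2$-vertex has only two cycle neighbors to avoid; a greedy order that first colors pairs of $3$-vertices from one end of the configuration toward the other and finishes with the $2$-vertices should succeed. For $F_4, F_6, F_7, F_8$, which each combine $2$-vertices on the cycle with one or two adjacent triangles, I would first color the triangle vertices using their larger lists, with a carefully chosen pair so that enough slack remains at nearby cycle vertices, and then extend around the cycle. For $F_{11}$, which contains three triangles adjacent to the $9$-cycle, the plan is to color two of the three triangles first with a pair-selection that leaves the third triangle together with its neighborhood in a subconfiguration matching the hypothesis of Lemma~\ref{lem-key-J1} or Lemma~\ref{lem-J2}, and then close out the proof by invoking one of these lemmas.

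The main obstacle is choosing the initial colors with enough foresight to avoid bottlenecks further down the coloring order. As in the proofs for $H_2$ and $H_3$, the typical device is a two-stage argument: at a pair of vertices $v_a, v_b$ whose $G^2$-distance puts them in potential conflict with a distant vertex $w$, select $c_a \in L_{F_i}(v_a)$ and $c_b \in L_{F_i}(v_b)$ so that $|L_{F_i}(w)\setminus\{c_a,c_b\}| \ge 2$; this is always possible when $|L_{F_i}(w)| \ge 3$ by a standard pair-counting argument, and a similar trick ensures that when two distant $2$-element lists are at stake they remain distinct after the initial choice. Once the right initial choices are in place for each $F_i$, the remaining extension is either purely greedy or reduces cleanly to Lemma~\ref{lem-key-K4-edge}, Lemma~\ref{lem-key-J1}, or Lemma~\ref{lem-J2} applied to an explicitly identified subgraph, yielding the desired contradiction in all six cases.
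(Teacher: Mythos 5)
Your proposal correctly identifies the paper's general template (delete $V(F_i)$, color $(G-V(F_i))^2$ by minimality, compute the residual lists $L_{F_i}$, and extend via carefully chosen initial colors followed by Lemma~\ref{lem-key-K4-edge}, \ref{lem-key-J1}, or \ref{lem-J2}), but it stops exactly where the proof begins. The entire content of Claim~\ref{claim-F-one} is the six configuration-specific extension arguments, and none of them is carried out: you never specify a coloring order for any $F_i$, never verify the residual list sizes, never exhibit the ``carefully chosen pair'' of initial colors, and never check that the leftover subconfiguration actually satisfies the hypotheses of the lemma you intend to invoke. The obstacle you name at the end --- ``choosing the initial colors with enough foresight to avoid bottlenecks'' --- is precisely the part that must be resolved separately for each of the six graphs, and it is nontrivial: for instance, in the $F_1$ case the paper must first prove the auxiliary fact $|L_{F_1}'(v_5)\cup L_{F_1}'(v_6)|\ge 3$ by a small contradiction argument before the choice of $c_4$ can be made, and analogous union bounds (e.g.\ $|L_{F_8}'(v_5)\cup L_{F_8}'(v_{11})|\ge 3$, $|L_{F_{11}}''(v_1)\cup L_{F_{11}}''(v_{10})|\ge 3$) are needed in the other cases. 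Deferring all of this is a genuine gap, not a stylistic omission.

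Moreover, the one case you describe in any concrete detail, $F_1$, is described incorrectly. In $F_1$ the four $2$-vertices of the $9$-cycle are the ones that retain the \emph{larger} residual lists (size $4$), while the $3$-vertices $v_7,v_8$ are down to lists of size $2$ and $v_1,v_3,v_5$ to size $3$; your assertion that ``the cycle $3$-vertices retain comparatively large lists'' is backwards. More importantly, a purely greedy sweep does not succeed for $F_1$: after the initial choices the final four vertices $v_9,v_1,v_2,v_3$ carry lists of sizes $2,2,3,2$ and induce a $P_4^2$, which is exactly why the paper must close with Lemma~\ref{lem-key-K4-edge} rather than greedily, and reaching that position already requires the non-greedy selection of $\alpha\in L_{F_1}(v_6)$ and $c_8\notin\{\alpha\}$ at the very start. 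As written, the proposal would need each of the six cases to be worked out in full before it could be accepted.
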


\noindent {\bf \it Proof.}
Suppose to the contrary that $G$ contains $F_i$ as a subgraph for some $i\in\{1,4,6,7,8,11\}$.  
Let $G_1 = G - V(F_i)$.

Since no vertex of $F_i$ has two neighbors outside $F_i$, whenever $u,v \in V(G_1)$ satisfy $d_G(u,v)\le 2$, we also have $d_{G_1}(u,v)\le 2$.  
Moreover, since $G$ contains no $4$- to $8$-cycles, any two $3$-vertices of $F_i$ that do not lie on the $9$-cycle have distance at least $3$ in $G$, so their colors never conflict.

By the minimality of $G$, the graph $G_1^2$ admits an $L$-coloring $\phi$.  
We show that $\phi$ extends to $F_i$, yielding an $L$-coloring of $G^2$, contradicting that $G$ is a counterexample.

For each $v \in V(F_i)$, define
\[
L_{F_i}(v)
   = L(v)\setminus\{\phi(x) : vx \in E(G^2)\ \text{and}\ x\notin V(F_i)\}.
\]

\medskip
\begin{enumerate}[(a)]

\item \textbf{Case $F_i = F_1$.}

As indicated in  Figure~\ref{F1-color} (a), we have
\[
|L_{F_1}(v_i)| \ge
\begin{cases}
2, & i \in \{7,8\},\\
3, & i \in \{1,3,5\},\\
4, & i \in \{2,4,6,9\}.
\end{cases}
\]

Since $|L_{F_1}(v_6)| \ge 4$ and $|L_{F_1}(v_5)| \ge 3$, we may choose a color 
$\alpha \in L_{F_1}(v_6)$ such that
\[
|L_{F_1}(v_5)\setminus\{\alpha\}| \ge 3.
\]

Color $v_8$ with a color $c_8 \in L_{F_1}(v_8)\setminus\{\alpha\}$, and color 
$v_7$ with a color $c_7 \in L_{F_1}(v_7)\setminus\{c_8\}$.

Let $L_{F_1}'(v_i)$ be the updated lists after coloring $v_7$ and $v_8$.  
See Figure~\ref{F1-color}(b) for the sizes of the lists $L_{F_1}'(v_i)$.
We claim that
\[
|L_{F_1}'(v_6)\cup L_{F_1}'(v_5)| \ge 3.
\]

If not, then $L_{F_1}'(v_6)=L_{F_1}'(v_5)$ and both lists have size $2$, since each has size at least $2$.    
Since  $L_{F_1}'(v_5)=L_{F_1}(v_5)\setminus\{c_7\}$ and $|L_{F_1}(v_5)\setminus\{\alpha\}| \ge 3$, we have that
$$ c_7 \in L_{F_1}(v_5), ~\text{and} ~ |L_{F_1}(v_5)| = 3.$$
By the fact $|L_{F_1}(v_5)\setminus\{\alpha\}| \ge 3$ again, we have $\alpha \not \in L_{F_1}(v_5)$ and thus $\alpha \not \in L_{F_1}'(v_5)$.   Since   $\alpha\ne c_8$,  we have
$\alpha\in L_{F_1}'(v_6)$, a contradiction to the assumption that $L_{F_1}'(v_6)=L_{F_1}'(v_5)$. This proves that $|L_{F_1}'(v_6)\cup L_{F_1}'(v_5)| \ge 3$.

Next choose $c_4\in L_{F_1}'(v_4)$ such that $|L_{F_1}'(v_3)\setminus\{c_4\}|\ge 3$.  
Since $|L_{F_1}'(v_6)\cup L_{F_1}'(v_5)| \ge 3$, we may select distinct colors
\[
c_6 \in L_{F_1}'(v_6) \qquad \text{and} \qquad  
c_5 \in L_{F_1}'(v_5)
\]
such that $c_4 \notin \{c_5, c_6\}$.  
Color $v_4, v_5, v_6$ with $c_4, c_5, c_6$, respectively.

Let $L_{F_1}''(v_i)$ be the lists after coloring $v_4, v_5, v_6$. As indicated in  Figure~\ref{F1-color}(c), we have 
\[
|L_{F_1}''(v_1)| \ge 2,\qquad
|L_{F_1}''(v_2)| \ge 3,\qquad
|L_{F_1}''(v_3)| \ge 2,\qquad
|L_{F_1}''(v_9)| \ge 2.
\]

By Lemma~\ref{lem-key-K4-edge}, we can color $v_9, v_1, v_2, v_3$ to obtain an 
$L$-coloring of $G^2$, a contradiction.  
Thus $F_1$ is not a subgraph of $G$.

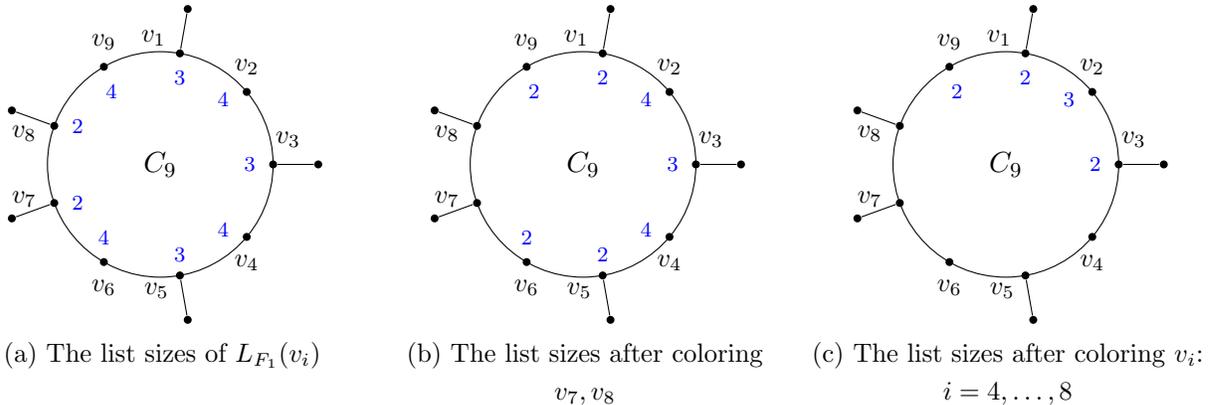
\begin{figure*}[htbp]
\begin{multicols}{3}
\begin{center}
\newdimen\R
\R=1.0cm
\begin{tikzpicture}
[u/.style={fill=black, minimum size =3pt,ellipse,inner sep=1pt},node distance=1.5cm,scale=1.5]
\node[u] (v1) at (80:1){};
\node[u] (v2) at (40:1){};
\node[u] (v3) at (0:1){};
\node[u] (v4) at (320:1){};
\node[u] (v5) at (280:1){};
\node[u] (v6) at (240:1){};
\node[u] (v7) at (200:1){};
\node[u] (v8) at (160:1){};
\node[u] (v9) at (120:1){};
\node (C9) at (0, 0){$C_9$};

\draw   (0,0) circle[radius=1cm];
  
\node[u] (w1) at (80:1.4){};
\node[u] (w3) at (0:1.4){};
\node[u] (w5) at (280:1.4){};
\node[u] (w7) at (200:1.4){};
\node[u] (w8) at (160:1.4){};
  \draw (v1) -- (w1);
  \draw (v3) -- (w3);
  \draw (v5) -- (w5);
  \draw (v7) -- (w7);
  \draw (v8) -- (w8); 
    
   \node[above=0.2cm, left=0.05cm, font=\small] at (v1) {$v_1$};  
   \node[above=0.1cm,font=\small] at (v2) {$v_2$};  
   \node[right=0.2cm,above= 0.1cm, font=\small] at (v3) {$v_3$};     
   \node[below=0.1cm,font=\small] at (v4) {$v_4$};     
    \node[below=0.2cm,left=0.01cm, font=\small] at (v5) {$v_5$};    
   \node[below=0.1cm,font=\small] at (v6) {$v_6$};      
   \node[above=0.05cm, left=0.1cm, font=\small] at (v7) {$v_7$};  
   \node[below=0.1cm, left=0.1cm, font=\small] at (v8) {$v_8$};    
    \node[above=0.1cm,font=\small] at (v9) {$v_9$};        
    
 \node[below=0.1cm,  font=\scriptsize] at (v1) {\textcolor{blue}{3}};
\node[below=0.1cm, left=0.1cm, font=\scriptsize] at (v2) {\textcolor{blue}{4}};  
\node[left=0.1cm,font=\scriptsize] at (v3) {\textcolor{blue}{3}};  
\node[above=0.1cm, left=0.1cm,font=\scriptsize] at (v4) {\textcolor{blue}{4}};  
\node[above=0.05cm,font=\scriptsize] at (v5) {\textcolor{blue}{3}};  
\node[above=0.1cm, font=\scriptsize] at (v6) {\textcolor{blue}{4}};  
\node[right=0.1cm, font=\scriptsize] at (v7) {\textcolor{blue}{2}};  
\node[right=0.1cm, font=\scriptsize] at (v8) {\textcolor{blue}{2}};  
\node[right=0.1cm, below=0.1cm, font=\scriptsize] at (v9) {\textcolor{blue}{4}};  

 \end{tikzpicture}
         \vfill {\small (a) The list sizes  of $L_{F_1}(v_i)$\ \ \ \ \ \ \ \ \ \ \ \ \ }
\end{center}
\par
\begin{center}
\begin{tikzpicture}
[u/.style={fill=black, minimum size =3pt,ellipse,inner sep=1pt},node distance=1.5cm,scale=1.5]
\node[u] (v1) at (80:1){};
\node[u] (v2) at (40:1){};
\node[u] (v3) at (0:1){};
\node[u] (v4) at (320:1){};
\node[u] (v5) at (280:1){};
\node[u] (v6) at (240:1){};
\node[u] (v7) at (200:1){};
\node[u] (v8) at (160:1){};
\node[u] (v9) at (120:1){};
\node (C9) at (0, 0){$C_9$};

\draw   (0,0) circle[radius=1cm];
  
\node[u] (w1) at (80:1.4){};
\node[u] (w3) at (0:1.4){};
\node[u] (w5) at (280:1.4){};
\node[u] (w7) at (200:1.4){};
\node[u] (w8) at (160:1.4){};
  \draw (v1) -- (w1);
  \draw (v3) -- (w3);
  \draw (v5) -- (w5);
  \draw (v7) -- (w7);
  \draw (v8) -- (w8); 
    
   \node[above=0.2cm, left=0.05cm, font=\small] at (v1) {$v_1$};  
   \node[above=0.1cm,font=\small] at (v2) {$v_2$};  
   \node[right=0.2cm,above= 0.1cm, font=\small] at (v3) {$v_3$};     
   \node[below=0.1cm,font=\small] at (v4) {$v_4$};     
    \node[below=0.2cm,left=0.01cm, font=\small] at (v5) {$v_5$};    
   \node[below=0.1cm,font=\small] at (v6) {$v_6$};      
   \node[above=0.05cm, left=0.1cm, font=\small] at (v7) {$v_7$};  
   \node[below=0.1cm, left=0.1cm, font=\small] at (v8) {$v_8$};    
    \node[above=0.1cm,font=\small] at (v9) {$v_9$};        
    
 \node[below=0.1cm,  font=\scriptsize] at (v1) {\textcolor{blue}{2}};
\node[below=0.1cm, left=0.1cm, font=\scriptsize] at (v2) {\textcolor{blue}{4}};  
\node[left=0.1cm,font=\scriptsize] at (v3) {\textcolor{blue}{3}};  
\node[above=0.1cm, left=0.1cm,font=\scriptsize] at (v4) {\textcolor{blue}{4}};  
\node[above=0.05cm,font=\scriptsize] at (v5) {\textcolor{blue}{2}};  
\node[above=0.1cm, font=\scriptsize] at (v6) {\textcolor{blue}{2}};  
\node[right=0.1cm, font=\scriptsize] at (v7) {};  
\node[right=0.1cm, font=\scriptsize] at (v8) {};  
\node[right=0.1cm, below=0.1cm, font=\scriptsize] at (v9) {\textcolor{blue}{2}};  
 \end{tikzpicture}
  \vfill {\small (b) The list sizes after coloring $v_7,v_8$}
\end{center}
\par
\begin{center}
\begin{tikzpicture}
[u/.style={fill=black, minimum size =3pt,ellipse,inner sep=1pt},node distance=1.5cm,scale=1.5]
\node[u] (v1) at (80:1){};
\node[u] (v2) at (40:1){};
\node[u] (v3) at (0:1){};
\node[u] (v4) at (320:1){};
\node[u] (v5) at (280:1){};
\node[u] (v6) at (240:1){};
\node[u] (v7) at (200:1){};
\node[u] (v8) at (160:1){};
\node[u] (v9) at (120:1){};
\node (C9) at (0, 0){$C_9$};

\draw   (0,0) circle[radius=1cm];
  
\node[u] (w1) at (80:1.4){};
\node[u] (w3) at (0:1.4){};
\node[u] (w5) at (280:1.4){};
\node[u] (w7) at (200:1.4){};
\node[u] (w8) at (160:1.4){};
  \draw (v1) -- (w1);
  \draw (v3) -- (w3);
  \draw (v5) -- (w5);
  \draw (v7) -- (w7);
  \draw (v8) -- (w8); 
    
   \node[above=0.2cm, left=0.05cm, font=\small] at (v1) {$v_1$};  
   \node[above=0.1cm,font=\small] at (v2) {$v_2$};  
   \node[right=0.2cm,above= 0.1cm, font=\small] at (v3) {$v_3$};     
   \node[below=0.1cm,font=\small] at (v4) {$v_4$};     
    \node[below=0.2cm,left=0.01cm, font=\small] at (v5) {$v_5$};    
   \node[below=0.1cm,font=\small] at (v6) {$v_6$};      
   \node[above=0.05cm, left=0.1cm, font=\small] at (v7) {$v_7$};  
   \node[below=0.1cm, left=0.1cm, font=\small] at (v8) {$v_8$};    
    \node[above=0.1cm,font=\small] at (v9) {$v_9$};        
    
 \node[below=0.1cm,  font=\scriptsize] at (v1) {\textcolor{blue}{2}};
\node[below=0.1cm, left=0.1cm, font=\scriptsize] at (v2) {\textcolor{blue}{3}};  
\node[left=0.1cm,font=\scriptsize] at (v3) {\textcolor{blue}{2}};  
\node[above=0.1cm, left=0.1cm,font=\scriptsize] at (v4) {};  
\node[above=0.05cm,font=\scriptsize] at (v5) {};  
\node[above=0.1cm, font=\scriptsize] at (v6) {};  
\node[right=0.1cm, font=\scriptsize] at (v7) {};  
\node[right=0.1cm, font=\scriptsize] at (v8) {};  
\node[right=0.1cm, below=0.1cm, font=\scriptsize] at (v9) {\textcolor{blue}{2}};  
 \end{tikzpicture}
       \vfill {\small (c) The list sizes after coloring $v_i$: $i = 4,\dots, 8$}
\end{center}
\end{multicols} 
\caption{The graph  $F_1$: The list sizes after coloring some vertices of $F_1$.} 
\label{F1-color}
\end{figure*}

\item \textbf{Case $F_i = F_4$}.

As indicated in Figure~\ref{F4-color}(a), we have
\[
|L_{F_4}(v_i)| \ge 
\begin{cases}
3, & i \in \{4, 7, 9, 10, 11\},\\
4, & i \in \{1, 5, 6, 8\},\\
5, & i \in \{2, 3\}.
\end{cases}
\]

Using an argument similar to the proof that $T_1$ is not a subgraph of $G$ (see Figure~\ref{T123-color}(i)), we first color the vertices 
$v_{11}, v_4, v_5, v_6$, and $v_7$.

Let $L_{F_4}'(v_i)$ be the updated lists after coloring 
$v_4, v_5, v_6, v_7$, and $v_{11}$ (see Figure~\ref{F4-color}(b)).

Since $|L_{F_4}'(v_2)| \ge 4$ and $|L_{F_4}'(v_3)| \ge 3$, there exists a color 
$\alpha \in L_{F_4}'(v_2)$ such that 
$|L_{F_4}'(v_3) \setminus \{\alpha\}| \ge 3$.
Color $v_9$ with a color 
$c_9 \in L_{F_4}'(v_9) \setminus \{\alpha\}$, and then color $v_8$ with a color 
$c_8 \in L_{F_4}'(v_8) \setminus \{c_9\}$.

Let $L_{F_4}''(v_i)$ be the lists after coloring $v_8$ and $v_9$ (see Figure~\ref{F4-color}(c)).  
Then
\[
|L_{F_4}''(v_1)| \ge 2,\qquad
|L_{F_4}''(v_2)| \ge 3,\qquad
|L_{F_4}''(v_3)| \ge 3,\qquad
|L_{F_4}''(v_{10})| \ge 2.
\]

Moreover, since $\alpha \in L_{F_4}''(v_2)\setminus L_{F_4}''(v_3)$, we have 
$|L_{F_4}''(v_2) \cup L_{F_4}''(v_3)| \ge 4$.
Thus, one can further color the vertices 
$v_1, v_2, v_3$, and $v_{10}$ to obtain an $L$-coloring of $G^2$, a contradiction.  
This proves that $F_4$ is not a subgraph of $G$.

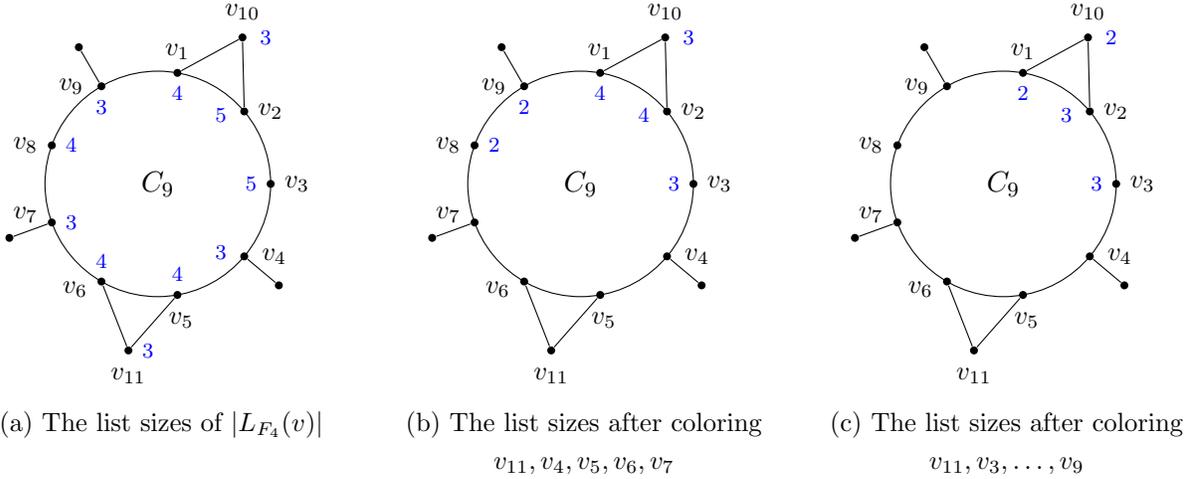
\begin{figure*}[htbp]
\begin{multicols}{3}
\begin{center}
\begin{tikzpicture}
[u/.style={fill=black, minimum size =3pt,ellipse,inner sep=1pt},node distance=1.5cm,scale=1.5]
\node[u] (v1) at (80:1){};
\node[u] (v2) at (40:1){};
\node[u] (v3) at (0:1){};
\node[u] (v4) at (320:1){};
\node[u] (v5) at (280:1){};
\node[u] (v6) at (240:1){};
\node[u] (v7) at (200:1){};
\node[u] (v8) at (160:1){};
\node[u] (v9) at (120:1){};
\node[u] (v10) at (60:1.5){};
\node[u] (v11) at (260:1.5){};
\node (C9) at (0, 0){$C_9$};

\draw   (0,0) circle[radius=1cm];
  \draw (v1) -- (v10);
  \draw (v2) -- (v10);  
  \draw (v5) -- (v11);
  \draw (v6) -- (v11);

\node[u] (w4) at (320:1.4){};
\node[u] (w7) at (200:1.4){};
\node[u] (w9) at (120:1.4){};
  \draw (v4) -- (w4);
  \draw (v7) -- (w7); 
 \draw (v9) -- (w9);   
    
   \node[above=0.05cm, font=\small] at (v1) {$v_1$};  
   \node[right=0.05cm,font=\small] at (v2) {$v_2$};  
   \node[right=0.05cm, font=\small] at (v3) {$v_3$};     
   \node[right=0.1cm,font=\small] at (v4) {$v_4$};     
    \node[right=0.05cm, below=0.1cm, font=\small] at (v5) {$v_5$};    
   \node[below=0.1cm,left=0.05cm, font=\small] at (v6) {$v_6$};      
   \node[above=0.1cm, left=0.05cm, font=\small] at (v7) {$v_7$};  
   \node[left=0.05cm, font=\small] at (v8) {$v_8$};    
    \node[left=0.1cm,font=\small] at (v9) {$v_9$};  
   \node[above=0.1cm,  font=\small] at (v10) {$v_{10}$};        
    \node[below=0.1cm,  font=\small] at (v11) {$v_{11}$};               
    
 \node[below=0.05cm,  font=\scriptsize] at (v1) {\textcolor{blue}{4}};
\node[below=0.05cm, left=0.1cm, font=\scriptsize] at (v2) {\textcolor{blue}{5}};  
\node[left=0.05cm,font=\scriptsize] at (v3) {\textcolor{blue}{5}};  
\node[above=0.05cm, left=0.1cm,font=\scriptsize] at (v4) {\textcolor{blue}{3}};  
\node[above=0.05cm,font=\scriptsize] at (v5) {\textcolor{blue}{4}};  
\node[above=0.05cm, font=\scriptsize] at (v6) {\textcolor{blue}{4}};  
\node[right=0.05cm, font=\scriptsize] at (v7) {\textcolor{blue}{3}};  
\node[right=0.05cm, font=\scriptsize] at (v8) {\textcolor{blue}{4}};  
\node[below=0.05cm, font=\scriptsize] at (v9) {\textcolor{blue}{3}};  
\node[right=0.1cm, font=\scriptsize] at (v10) {\textcolor{blue}{3}}; 
\node[right=0.05cm, font=\scriptsize] at (v11) {\textcolor{blue}{3}}; 
 \end{tikzpicture}
        \vfill {\small (a) The list sizes of $|L_{F_4}(v)|$ \ \ \ \ \ \ \ \ \ \ \ \ \ \ \ \ \ \  } 
\end{center}
\par
\begin{center}
\begin{tikzpicture}
[u/.style={fill=black, minimum size =3pt,ellipse,inner sep=1pt},node distance=1.5cm,scale=1.5]
\node[u] (v1) at (80:1){};
\node[u] (v2) at (40:1){};
\node[u] (v3) at (0:1){};
\node[u] (v4) at (320:1){};
\node[u] (v5) at (280:1){};
\node[u] (v6) at (240:1){};
\node[u] (v7) at (200:1){};
\node[u] (v8) at (160:1){};
\node[u] (v9) at (120:1){};
\node[u] (v10) at (60:1.5){};
\node[u] (v11) at (260:1.5){};
\node (C9) at (0, 0){$C_9$};

\draw   (0,0) circle[radius=1cm];
  \draw (v1) -- (v10);
  \draw (v2) -- (v10);  
  \draw (v5) -- (v11);
  \draw (v6) -- (v11);

\node[u] (w4) at (320:1.4){};
\node[u] (w7) at (200:1.4){};
\node[u] (w9) at (120:1.4){};
  \draw (v4) -- (w4);
  \draw (v7) -- (w7); 
 \draw (v9) -- (w9);   
    
   \node[above=0.05cm, font=\small] at (v1) {$v_1$};  
   \node[right=0.05cm,font=\small] at (v2) {$v_2$};  
   \node[right=0.05cm, font=\small] at (v3) {$v_3$};     
   \node[right=0.1cm,font=\small] at (v4) {$v_4$};     
    \node[right=0.05cm, below=0.1cm, font=\small] at (v5) {$v_5$};    
   \node[below=0.1cm,left=0.05cm, font=\small] at (v6) {$v_6$};      
   \node[above=0.1cm, left=0.05cm, font=\small] at (v7) {$v_7$};  
   \node[left=0.05cm, font=\small] at (v8) {$v_8$};    
    \node[left=0.1cm,font=\small] at (v9) {$v_9$};  
   \node[above=0.1cm,  font=\small] at (v10) {$v_{10}$};        
    \node[below=0.1cm,  font=\small] at (v11) {$v_{11}$};               
    
 \node[below=0.05cm,  font=\scriptsize] at (v1) {\textcolor{blue}{4}};
\node[below=0.05cm, left=0.1cm, font=\scriptsize] at (v2) {\textcolor{blue}{4}};  
\node[left=0.05cm,font=\scriptsize] at (v3) {\textcolor{blue}{3}};  
\node[above=0.05cm, left=0.1cm,font=\scriptsize] at (v4) {};  
\node[above=0.05cm,font=\scriptsize] at (v5) {};  
\node[above=0.05cm, font=\scriptsize] at (v6) {};  
\node[right=0.05cm, font=\scriptsize] at (v7) {};  
\node[right=0.05cm, font=\scriptsize] at (v8) {\textcolor{blue}{2}};  
\node[below=0.05cm, font=\scriptsize] at (v9) {\textcolor{blue}{2}};  
\node[right=0.1cm, font=\scriptsize] at (v10) {\textcolor{blue}{3}}; 
\node[right=0.05cm, font=\scriptsize] at (v11) {}; 
 \end{tikzpicture}
       \vfill {\small (b) The list sizes after coloring $v_{11}, v_4, v_5, v_6, v_7$} 
\end{center}
\par
\begin{center}
\begin{tikzpicture}
[u/.style={fill=black, minimum size =3pt,ellipse,inner sep=1pt},node distance=1.5cm,scale=1.5]
\node[u] (v1) at (80:1){};
\node[u] (v2) at (40:1){};
\node[u] (v3) at (0:1){};
\node[u] (v4) at (320:1){};
\node[u] (v5) at (280:1){};
\node[u] (v6) at (240:1){};
\node[u] (v7) at (200:1){};
\node[u] (v8) at (160:1){};
\node[u] (v9) at (120:1){};
\node[u] (v10) at (60:1.5){};
\node[u] (v11) at (260:1.5){};
\node (C9) at (0, 0){$C_9$};

\draw   (0,0) circle[radius=1cm];
  \draw (v1) -- (v10);
  \draw (v2) -- (v10);  
  \draw (v5) -- (v11);
  \draw (v6) -- (v11);

\node[u] (w4) at (320:1.4){};
\node[u] (w7) at (200:1.4){};
\node[u] (w9) at (120:1.4){};
  \draw (v4) -- (w4);
  \draw (v7) -- (w7); 
 \draw (v9) -- (w9);   
    
   \node[above=0.05cm, font=\small] at (v1) {$v_1$};  
   \node[right=0.05cm,font=\small] at (v2) {$v_2$};  
   \node[right=0.05cm, font=\small] at (v3) {$v_3$};     
   \node[right=0.1cm,font=\small] at (v4) {$v_4$};     
    \node[right=0.05cm, below=0.1cm, font=\small] at (v5) {$v_5$};    
   \node[below=0.1cm,left=0.05cm, font=\small] at (v6) {$v_6$};      
   \node[above=0.1cm, left=0.05cm, font=\small] at (v7) {$v_7$};  
   \node[left=0.05cm, font=\small] at (v8) {$v_8$};    
    \node[left=0.1cm,font=\small] at (v9) {$v_9$};  
   \node[above=0.1cm,  font=\small] at (v10) {$v_{10}$};        
    \node[below=0.1cm,  font=\small] at (v11) {$v_{11}$};               
    
 \node[below=0.05cm,  font=\scriptsize] at (v1) {\textcolor{blue}{2}};
\node[below=0.05cm, left=0.1cm, font=\scriptsize] at (v2) {\textcolor{blue}{3}};  
\node[left=0.05cm,font=\scriptsize] at (v3) {\textcolor{blue}{3}};  
\node[above=0.05cm, left=0.1cm,font=\scriptsize] at (v4) {};  
\node[above=0.05cm,font=\scriptsize] at (v5) {};  
\node[above=0.05cm, font=\scriptsize] at (v6) {};  
\node[right=0.05cm, font=\scriptsize] at (v7) {};  
\node[right=0.05cm, font=\scriptsize] at (v8) {}; 
\node[below=0.05cm, font=\scriptsize] at (v9) {}; 
\node[right=0.1cm, font=\scriptsize] at (v10) {\textcolor{blue}{2}}; 
\node[right=0.05cm, font=\scriptsize] at (v11) {}; 
 \end{tikzpicture}
       \vfill {\small (c) The list sizes after coloring $v_{11}, v_3, \dots, v_9$}
\end{center}
\end{multicols} 
\caption{The graph  $F_4$: The list sizes after coloring some vertices of $F_4$.} 
\label{F4-color}
\end{figure*}

\item \textbf{Case $F_i = F_6$}.

As indicated in Figure~\ref{F6-color}(a), we have
\[
|L_{F_6}(v_i)| \ge 
\begin{cases}
2, & i \in \{4,5\},\\
3, & i \in \{9,10,11\},\\
4, & i \in \{1,6\},\\
5, & i \in \{2,3,7,8\}.\\
\end{cases}
\]

Since $|L_{F_6}(v_6)| \ge 4$ and $|L_{F_6}(v_{11})| \ge 3$, there exists a color 
$\alpha \in L_{F_6}(v_6)$ such that 
\[
|L_{F_6}(v_{11}) \setminus \{\alpha\}| \ge 3.
\]
Color $v_4$ by a color $c_4 \in L_{F_6}(v_4) \setminus \{\alpha\}$, and then color $v_5$ by a color 
$c_5 \in L_{F_6}(v_5) \setminus \{c_4\}$.

Let $L_{F_6}'(v_i)$ be the updated lists after coloring $v_4$ and $v_5$ (see Figure~\ref{F6-color}(b)).  
By a similar argument to the one used in Case~$F_1$, we have
\[
|L_{F_6}'(v_6) \cup L_{F_6}'(v_{11})| \ge 3.
\]

Now, since $|L_{F_6}'(v_8)| \ge 5$ and $|L_{F_6}'(v_7)| \ge 4$, there exists a color  
$c_8 \in L_{F_6}'(v_8)$ such that  
\[
|L_{F_6}'(v_7) \setminus \{c_8\}| \ge 4.
\]
Color $v_8$ with $c_8$.  
Since $|L_{F_6}'(v_6) \cup L_{F_6}'(v_{11})| \ge 3$, we have
\[
|L_{F_6}'(v_6) \setminus \{c_8\} \cup L_{F_6}'(v_{11}) \setminus \{c_8\}| \ge 2,
\]
and hence we may color $v_6$ and $v_{11}$.

Let $L_{F_6}''(v_i)$ be the updated lists after coloring $v_6$, $v_{11}$, and $v_8$ 
(see Figure~\ref{F6-color}(c)).  
Then
\[
|L_{F_6}''(v_1)| \ge 3,\quad
|L_{F_6}''(v_2)| \ge 4,\quad
|L_{F_6}''(v_3)| \ge 3,\quad
|L_{F_6}''(v_9)| \ge 2,\quad
|L_{F_6}''(v_{10})| \ge 3,\quad
|L_{F_6}''(v_7)| \ge 2.
\]

We first color $v_1$, $v_2$, $v_3$, $v_9$, and $v_{10}$ by Lemma~\ref{lem-key-J1}, and then color $v_7$ to obtain an $L$-coloring of $G^2$, a contradiction.  
This proves that $F_6$ is not a subgraph of $G$.
\begin{figure*}[htbp]
\begin{multicols}{3}
\begin{center}
\begin{tikzpicture}
[u/.style={fill=black, minimum size =3pt,ellipse,inner sep=1pt},node distance=1.5cm,scale=1.5]
\node[u] (v1) at (80:1){};
\node[u] (v2) at (40:1){};
\node[u] (v3) at (0:1){};
\node[u] (v4) at (320:1){};
\node[u] (v5) at (280:1){};
\node[u] (v6) at (240:1){};
\node[u] (v7) at (200:1){};
\node[u] (v8) at (160:1){};
\node[u] (v9) at (120:1){};
\node[u] (v10) at (60:1.5){};
\node[u] (v11) at (220:1.5){};
\node (C9) at (0, 0){$C_9$};

\draw   (0,0) circle[radius=1cm];
  \draw (v1) -- (v10);
  \draw (v2) -- (v10);   
  \draw (v6) -- (v11);
  \draw (v7) -- (v11);

\node[u] (w4) at (320:1.4){};
\node[u] (w5) at (280:1.4){};
\node[u] (w9) at (120:1.4){};
  \draw (v4) -- (w4);
  \draw (v5) -- (w5);
 \draw (v9) -- (w9);   
    
   \node[above=0.1cm, font=\small] at (v1) {$v_1$};  
   \node[right=0.1cm,font=\small] at (v2) {$v_2$};  
   \node[right=0.1cm, font=\small] at (v3) {$v_3$};     
   \node[right=0.1cm,font=\small] at (v4) {$v_4$};     
    \node[below=0.2cm,right=0.01cm, font=\small] at (v5) {$v_5$};    
   \node[below=0.1cm,font=\small] at (v6) {$v_6$};      
   \node[above=0.05cm, left=0.05cm, font=\small] at (v7) {$v_7$};  
   \node[left=0.1cm, font=\small] at (v8) {$v_8$};    
    \node[left=0.1cm,font=\small] at (v9) {$v_9$};  
   \node[above=0.1cm,  font=\small] at (v10) {$v_{10}$};        
    \node[below=0.1cm, font=\small] at (v11) {$v_{11}$};

 \node[below=0.05cm,  font=\scriptsize] at (v1) {\textcolor{blue}{4}};
\node[below=0.05cm, left=0.1cm, font=\scriptsize] at (v2) {\textcolor{blue}{5}};  
\node[left=0.05cm,font=\scriptsize] at (v3) {\textcolor{blue}{5}};  
\node[above=0.05cm, left=0.1cm,font=\scriptsize] at (v4) {\textcolor{blue}{2}};  
\node[above=0.05cm,font=\scriptsize] at (v5) {\textcolor{blue}{2}};  
\node[above=0.05cm, font=\scriptsize] at (v6) {\textcolor{blue}{4}};  
\node[right=0.05cm, font=\scriptsize] at (v7) {\textcolor{blue}{5}};  
\node[right=0.05cm, font=\scriptsize] at (v8) {\textcolor{blue}{5}};  
\node[below=0.05cm, font=\scriptsize] at (v9) {\textcolor{blue}{3}};  
\node[ below=0.1cm, right=0.1cm, font=\scriptsize] at (v10) {\textcolor{blue}{3}}; 
\node[left=0.1cm, font=\scriptsize] at (v11) {\textcolor{blue}{3}}; 
 \end{tikzpicture}
       \vfill {\small (a) The list sizes of $|L_{F_6}(v)|$  \ \ \ \ \ \ \ \ \ \ \ \ \ \ \ \ \ \ } 
 
\end{center}
\par
\begin{center}
\begin{tikzpicture}
[u/.style={fill=black, minimum size =3pt,ellipse,inner sep=1pt},node distance=1.5cm,scale=1.5]
\node[u] (v1) at (80:1){};
\node[u] (v2) at (40:1){};
\node[u] (v3) at (0:1){};
\node[u] (v4) at (320:1){};
\node[u] (v5) at (280:1){};
\node[u] (v6) at (240:1){};
\node[u] (v7) at (200:1){};
\node[u] (v8) at (160:1){};
\node[u] (v9) at (120:1){};
\node[u] (v10) at (60:1.5){};
\node[u] (v11) at (220:1.5){};
\node (C9) at (0, 0){$C_9$};

\draw   (0,0) circle[radius=1cm];
  \draw (v1) -- (v10);
  \draw (v2) -- (v10);   
  \draw (v6) -- (v11);
  \draw (v7) -- (v11);

\node[u] (w4) at (320:1.4){};
\node[u] (w5) at (280:1.4){};
\node[u] (w9) at (120:1.4){};
  \draw (v4) -- (w4);
  \draw (v5) -- (w5);
 \draw (v9) -- (w9);   
    
   \node[above=0.1cm, font=\small] at (v1) {$v_1$};  
   \node[right=0.1cm,font=\small] at (v2) {$v_2$};  
   \node[right=0.1cm, font=\small] at (v3) {$v_3$};     
   \node[right=0.1cm,font=\small] at (v4) {$v_4$};     
    \node[below=0.2cm,right=0.01cm, font=\small] at (v5) {$v_5$};    
   \node[below=0.1cm,font=\small] at (v6) {$v_6$};      
   \node[above=0.05cm, left=0.05cm, font=\small] at (v7) {$v_7$};  
   \node[left=0.1cm, font=\small] at (v8) {$v_8$};    
    \node[left=0.1cm,font=\small] at (v9) {$v_9$};  
   \node[above=0.1cm,  font=\small] at (v10) {$v_{10}$};        
    \node[below=0.1cm, font=\small] at (v11) {$v_{11}$};

 \node[below=0.05cm,  font=\scriptsize] at (v1) {\textcolor{blue}{4}};
\node[below=0.05cm, left=0.1cm, font=\scriptsize] at (v2) {\textcolor{blue}{4}};  
\node[left=0.05cm,font=\scriptsize] at (v3) {\textcolor{blue}{3}};  
\node[above=0.05cm, left=0.1cm,font=\scriptsize] at (v4) {};  
\node[above=0.05cm,font=\scriptsize] at (v5) {};  
\node[above=0.05cm, font=\scriptsize] at (v6) {\textcolor{blue}{2}};  
\node[right=0.05cm, font=\scriptsize] at (v7) {\textcolor{blue}{4}};  
\node[right=0.05cm, font=\scriptsize] at (v8) {\textcolor{blue}{5}};  
\node[below=0.05cm, font=\scriptsize] at (v9) {\textcolor{blue}{3}};  
\node[ below=0.1cm, right=0.1cm, font=\scriptsize] at (v10) {\textcolor{blue}{3}}; 
\node[left=0.1cm, font=\scriptsize] at (v11) {\textcolor{blue}{2}}; 
 \end{tikzpicture}
        \vfill {\small (b) The list sizes after coloring $v_4, v_5$}  
\end{center}
\par
\begin{center}
\begin{tikzpicture}
[u/.style={fill=black, minimum size =3pt,ellipse,inner sep=1pt},node distance=1.5cm,scale=1.5]
\node[u] (v1) at (80:1){};
\node[u] (v2) at (40:1){};
\node[u] (v3) at (0:1){};
\node[u] (v4) at (320:1){};
\node[u] (v5) at (280:1){};
\node[u] (v6) at (240:1){};
\node[u] (v7) at (200:1){};
\node[u] (v8) at (160:1){};
\node[u] (v9) at (120:1){};
\node[u] (v10) at (60:1.5){};
\node[u] (v11) at (220:1.5){};
\node (C9) at (0, 0){$C_9$};

\draw   (0,0) circle[radius=1cm];
  \draw (v1) -- (v10);
  \draw (v2) -- (v10);   
  \draw (v6) -- (v11);
  \draw (v7) -- (v11);

\node[u] (w4) at (320:1.4){};
\node[u] (w5) at (280:1.4){};
\node[u] (w9) at (120:1.4){};
  \draw (v4) -- (w4);
  \draw (v5) -- (w5);
 \draw (v9) -- (w9);   
    
   \node[above=0.1cm, font=\small] at (v1) {$v_1$};  
   \node[right=0.1cm,font=\small] at (v2) {$v_2$};  
   \node[right=0.1cm, font=\small] at (v3) {$v_3$};     
   \node[right=0.1cm,font=\small] at (v4) {$v_4$};     
    \node[below=0.2cm,right=0.01cm, font=\small] at (v5) {$v_5$};    
   \node[below=0.1cm,font=\small] at (v6) {$v_6$};      
   \node[above=0.05cm, left=0.05cm, font=\small] at (v7) {$v_7$};  
   \node[left=0.1cm, font=\small] at (v8) {$v_8$};    
    \node[left=0.1cm,font=\small] at (v9) {$v_9$};  
   \node[above=0.1cm,  font=\small] at (v10) {$v_{10}$};        
    \node[below=0.1cm, font=\small] at (v11) {$v_{11}$};

 \node[below=0.05cm,  font=\scriptsize] at (v1) {\textcolor{blue}{3}};
\node[below=0.05cm, left=0.1cm, font=\scriptsize] at (v2) {\textcolor{blue}{4}};  
\node[left=0.05cm,font=\scriptsize] at (v3) {\textcolor{blue}{3}};  
\node[above=0.05cm, left=0.1cm,font=\scriptsize] at (v4) {};  
\node[above=0.05cm,font=\scriptsize] at (v5) {};  
\node[right=0.05cm, font=\scriptsize] at (v7) {\textcolor{blue}{2}};  
\node[right=0.05cm, font=\scriptsize] at (v8) {};  
\node[below=0.05cm, font=\scriptsize] at (v9) {\textcolor{blue}{2}};  
\node[ below=0.1cm, right=0.1cm, font=\scriptsize] at (v10) {\textcolor{blue}{3}}; 
 \end{tikzpicture}
         \vfill {\small (c) The list sizes after coloring $v_4, v_5,v_8$}  
\end{center}
\end{multicols} 
\caption{The graph $F_6$: The list sizes after coloring some vertices of $F_6$.}
\label{F6-color}
\end{figure*}
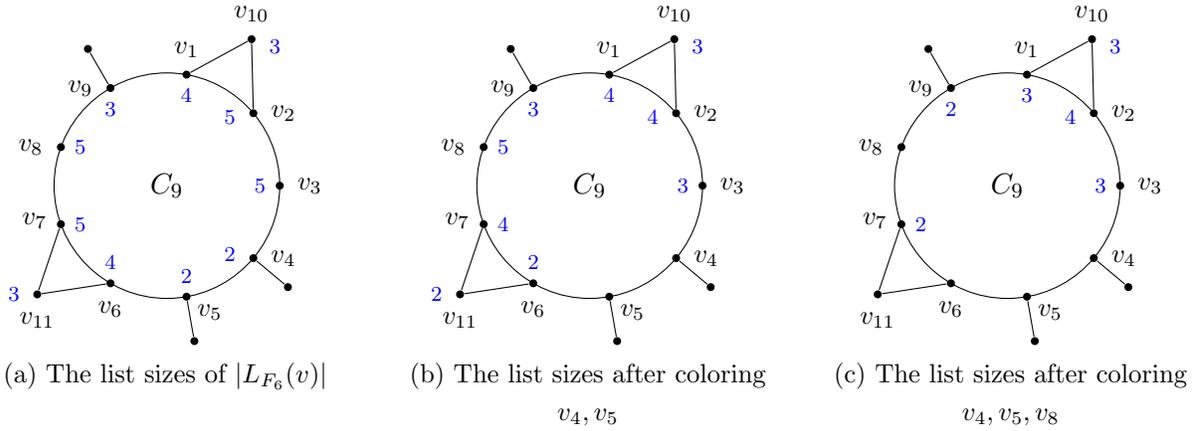

\item \textbf{Case $F_i = F_7$}.

As indicated in Figure~\ref{F7-color}(a), we have:
\[
|L_{F_7}(v_i)| \ge 
\begin{cases}
2, & i\in \{4,5\},\\
3, & i \in \{9,10,11\},\\
4, & i \in \{1,8\},\\
5, & i \in \{2,3,6,7\}.\\
\end{cases}
\]

First color $v_4$ and $v_5$ greedily.  
Let $L_{F_7}'(v_i)$ be the lists after coloring $v_4$ and $v_5$ (see Figure~\ref{F7-color}(b)).

Now color $v_8$ by a color $c_8 \in L_{F_7}'(v_8)$ such that 
\[
|L_{F_7}'(v_6) \setminus \{c_8\}| \ge 3.
\]
Let $L_{F_7}''(v_i)$ be the lists after coloring $v_4$, $v_5$, and $v_8$ (see Figure~\ref{F7-color}(c)).

By Lemma~\ref{lem-key-J1}, color the vertices $v_1, v_2, v_3, v_9$, and $v_{10}$.  
Then color $v_{11}$, $v_7$, and $v_6$ in this order to yield an $L$-coloring of $G^2$, a contradiction.  
Thus, $F_7$ is not a subgraph of $G$.
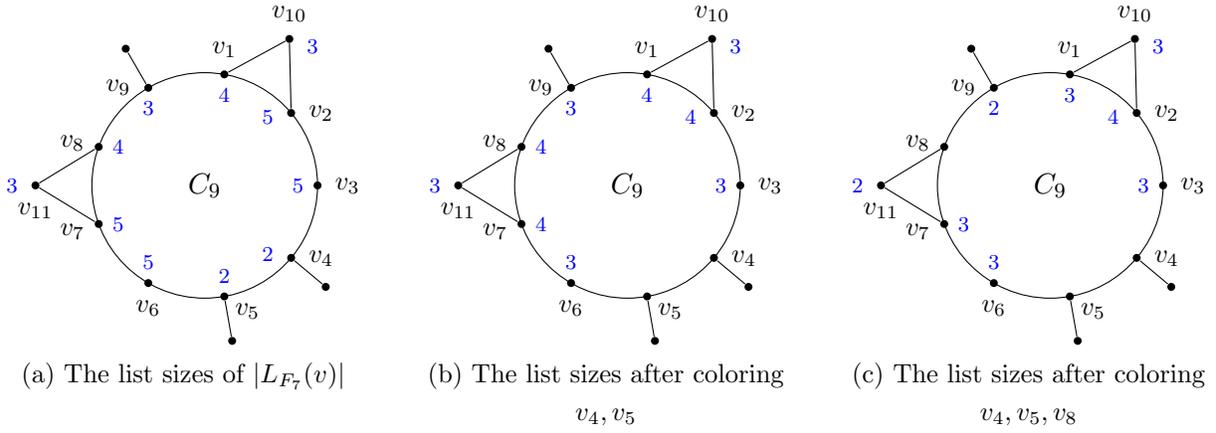
\begin{figure*}[htbp]
\begin{multicols}{3}
\begin{center}
\begin{tikzpicture}
[u/.style={fill=black, minimum size =3pt,ellipse,inner sep=1pt},node distance=1.5cm,scale=1.5]
\node[u] (v1) at (80:1){};
\node[u] (v2) at (40:1){};
\node[u] (v3) at (0:1){};
\node[u] (v4) at (320:1){};
\node[u] (v5) at (280:1){};
\node[u] (v6) at (240:1){};
\node[u] (v7) at (200:1){};
\node[u] (v8) at (160:1){};
\node[u] (v9) at (120:1){};
\node[u] (v10) at (60:1.5){};
\node[u] (v11) at (180:1.5){};
\node (C9) at (0, 0){$C_9$};

\draw   (0,0) circle[radius=1cm];
  \draw (v1) -- (v10);
  \draw (v2) -- (v10);   
  \draw (v7) -- (v11);
  \draw (v8) -- (v11);

\node[u] (w4) at (320:1.4){};
\node[u] (w5) at (280:1.4){};
\node[u] (w9) at (120:1.4){};
  \draw (v4) -- (w4);
  \draw (v5) -- (w5);
 \draw (v9) -- (w9);   
    
   \node[above=0.1cm, font=\small] at (v1) {$v_1$};  
   \node[right=0.1cm,font=\small] at (v2) {$v_2$};  
   \node[right=0.1cm, font=\small] at (v3) {$v_3$};     
   \node[right=0.1cm,font=\small] at (v4) {$v_4$};     
    \node[below=0.2cm,right=0.01cm, font=\small] at (v5) {$v_5$};    
   \node[below=0.1cm,font=\small] at (v6) {$v_6$};      
   \node[below=0.1cm, left=0.05cm, font=\small] at (v7) {$v_7$};  
   \node[above=0.05cm, left=0.05cm, font=\small] at (v8) {$v_8$};    
    \node[left=0.1cm,font=\small] at (v9) {$v_9$};  
   \node[above=0.1cm,  font=\small] at (v10) {$v_{10}$};        
    \node[below=0.1cm, font=\small] at (v11) {$v_{11}$};

 \node[below=0.05cm,  font=\scriptsize] at (v1) {\textcolor{blue}{4}};
\node[below=0.05cm, left=0.1cm, font=\scriptsize] at (v2) {\textcolor{blue}{5}};  
\node[left=0.05cm,font=\scriptsize] at (v3) {\textcolor{blue}{5}};  
\node[above=0.05cm, left=0.1cm,font=\scriptsize] at (v4) {\textcolor{blue}{2}};  
\node[above=0.05cm,font=\scriptsize] at (v5) {\textcolor{blue}{2}};  
\node[above=0.05cm, font=\scriptsize] at (v6) {\textcolor{blue}{5}};  
\node[right=0.05cm, font=\scriptsize] at (v7) {\textcolor{blue}{5}};  
\node[right=0.05cm, font=\scriptsize] at (v8) {\textcolor{blue}{4}};  
\node[below=0.05cm, font=\scriptsize] at (v9) {\textcolor{blue}{3}};  
\node[ below=0.1cm, right=0.1cm, font=\scriptsize] at (v10) {\textcolor{blue}{3}}; 
\node[left=0.1cm, font=\scriptsize] at (v11) {\textcolor{blue}{3}}; 
 \end{tikzpicture}
        \vfill {\small (a) The list sizes of $|L_{F_7}(v)|$  \ \ \ \ \ \ \ \ \ \ \ \ \ \ \ \ \ \ }  
\end{center}
\par
\begin{center}
\begin{tikzpicture}
[u/.style={fill=black, minimum size =3pt,ellipse,inner sep=1pt},node distance=1.5cm,scale=1.5]
\node[u] (v1) at (80:1){};
\node[u] (v2) at (40:1){};
\node[u] (v3) at (0:1){};
\node[u] (v4) at (320:1){};
\node[u] (v5) at (280:1){};
\node[u] (v6) at (240:1){};
\node[u] (v7) at (200:1){};
\node[u] (v8) at (160:1){};
\node[u] (v9) at (120:1){};
\node[u] (v10) at (60:1.5){};
\node[u] (v11) at (180:1.5){};
\node (C9) at (0, 0){$C_9$};

\draw   (0,0) circle[radius=1cm];
  \draw (v1) -- (v10);
  \draw (v2) -- (v10);   
  \draw (v7) -- (v11);
  \draw (v8) -- (v11);

\node[u] (w4) at (320:1.4){};
\node[u] (w5) at (280:1.4){};
\node[u] (w9) at (120:1.4){};
  \draw (v4) -- (w4);
  \draw (v5) -- (w5);
 \draw (v9) -- (w9);   
    
   \node[above=0.1cm, font=\small] at (v1) {$v_1$};  
   \node[right=0.1cm,font=\small] at (v2) {$v_2$};  
   \node[right=0.1cm, font=\small] at (v3) {$v_3$};     
   \node[right=0.1cm,font=\small] at (v4) {$v_4$};     
    \node[below=0.2cm,right=0.01cm, font=\small] at (v5) {$v_5$};    
   \node[below=0.1cm,font=\small] at (v6) {$v_6$};      
   \node[below=0.1cm, left=0.05cm, font=\small] at (v7) {$v_7$};  
   \node[above=0.05cm, left=0.05cm, font=\small] at (v8) {$v_8$};    
    \node[left=0.1cm,font=\small] at (v9) {$v_9$};  
   \node[above=0.1cm,  font=\small] at (v10) {$v_{10}$};        
    \node[below=0.1cm, font=\small] at (v11) {$v_{11}$};

 \node[below=0.05cm,  font=\scriptsize] at (v1) {\textcolor{blue}{4}};
\node[below=0.05cm, left=0.1cm, font=\scriptsize] at (v2) {\textcolor{blue}{4}};  
\node[left=0.05cm,font=\scriptsize] at (v3) {\textcolor{blue}{3}};  
\node[above=0.05cm, left=0.1cm,font=\scriptsize] at (v4) {};  
\node[above=0.05cm,font=\scriptsize] at (v5) {};  
\node[above=0.05cm, font=\scriptsize] at (v6) {\textcolor{blue}{3}};  
\node[right=0.05cm, font=\scriptsize] at (v7) {\textcolor{blue}{4}};  
\node[right=0.05cm, font=\scriptsize] at (v8) {\textcolor{blue}{4}};  
\node[below=0.05cm, font=\scriptsize] at (v9) {\textcolor{blue}{3}};  
\node[ below=0.1cm, right=0.1cm, font=\scriptsize] at (v10) {\textcolor{blue}{3}}; 
\node[left=0.1cm, font=\scriptsize] at (v11) {\textcolor{blue}{3}}; 
 \end{tikzpicture}
         \vfill {\small (b) The list sizes after coloring $v_4, v_5$} 

\end{center}
\par
\begin{center}
\begin{tikzpicture}
[u/.style={fill=black, minimum size =3pt,ellipse,inner sep=1pt},node distance=1.5cm,scale=1.5]
\node[u] (v1) at (80:1){};
\node[u] (v2) at (40:1){};
\node[u] (v3) at (0:1){};
\node[u] (v4) at (320:1){};
\node[u] (v5) at (280:1){};
\node[u] (v6) at (240:1){};
\node[u] (v7) at (200:1){};
\node[u] (v8) at (160:1){};
\node[u] (v9) at (120:1){};
\node[u] (v10) at (60:1.5){};
\node[u] (v11) at (180:1.5){};
\node (C9) at (0, 0){$C_9$};

\draw   (0,0) circle[radius=1cm];
  \draw (v1) -- (v10);
  \draw (v2) -- (v10);   
  \draw (v7) -- (v11);
  \draw (v8) -- (v11);

\node[u] (w4) at (320:1.4){};
\node[u] (w5) at (280:1.4){};
\node[u] (w9) at (120:1.4){};
  \draw (v4) -- (w4);
  \draw (v5) -- (w5);
 \draw (v9) -- (w9);   
    
   \node[above=0.1cm, font=\small] at (v1) {$v_1$};  
   \node[right=0.1cm,font=\small] at (v2) {$v_2$};  
   \node[right=0.1cm, font=\small] at (v3) {$v_3$};     
   \node[right=0.1cm,font=\small] at (v4) {$v_4$};     
    \node[below=0.2cm,right=0.01cm, font=\small] at (v5) {$v_5$};    
   \node[below=0.1cm,font=\small] at (v6) {$v_6$};      
   \node[below=0.1cm, left=0.05cm, font=\small] at (v7) {$v_7$};  
   \node[above=0.05cm, left=0.05cm, font=\small] at (v8) {$v_8$};    
    \node[left=0.1cm,font=\small] at (v9) {$v_9$};  
   \node[above=0.1cm,  font=\small] at (v10) {$v_{10}$};        
    \node[below=0.1cm, font=\small] at (v11) {$v_{11}$};

 \node[below=0.05cm,  font=\scriptsize] at (v1) {\textcolor{blue}{3}};
\node[below=0.05cm, left=0.1cm, font=\scriptsize] at (v2) {\textcolor{blue}{4}};  
\node[left=0.05cm,font=\scriptsize] at (v3) {\textcolor{blue}{3}};  
\node[above=0.05cm, left=0.1cm,font=\scriptsize] at (v4) {};  
\node[above=0.05cm,font=\scriptsize] at (v5) {};  
\node[above=0.05cm, font=\scriptsize] at (v6) {\textcolor{blue}{3}};  
\node[right=0.05cm, font=\scriptsize] at (v7) {\textcolor{blue}{3}};  
\node[right=0.05cm, font=\scriptsize] at (v8) {};  
\node[below=0.05cm, font=\scriptsize] at (v9) {\textcolor{blue}{2}};  
\node[ below=0.1cm, right=0.1cm, font=\scriptsize] at (v10) {\textcolor{blue}{3}}; 
\node[left=0.1cm, font=\scriptsize] at (v11) {\textcolor{blue}{2}}; 
 \end{tikzpicture}
         \vfill {\small (c) The list sizes after coloring $v_4, v_5, v_8$}
\end{center}
\end{multicols} 
\caption{The graph $F_7$. The list sizes after coloring some vertices of $F_7$.}
\label{F7-color}
\end{figure*}

\item \textbf{Case $F_i = F_8$}.

As indicated in Figure~\ref{F8-color}(a), we have
\[
|L_{F_8}(v_i)| \ge 
\begin{cases}
2, & i = 6,7,\\
3, & i = 9,10,11,\\
4, & i = 1,5,8,\\
5, & i = 2,4,\\
6, & i = 3.\\
\end{cases}
\]

By an argument similar to that used in Case~$F_1$ (in the choice of $c_7$ and $c_8$),  
there exist two distinct colors 
\[
c_6 \in L_{F_8}(v_6), \qquad 
c_7 \in L_{F_8}(v_7),
\]
such that
\[
\left| L_{F_8}(v_5)\setminus\{c_6,c_7\} \;\cup\; L_{F_8}(v_{11})\setminus\{c_6\} \right| \ge 3.
\]

We first color $v_6$ and $v_7$ with $c_6$ and $c_7$, respectively, and then color  
$v_8$, $v_9$, $v_1$, and $v_{10}$ in this order.

Let $L_{F_8}'(v_i)$ be the updated lists after coloring  
$v_1$, $v_6$, $v_7$, $v_8$, $v_9$, and $v_{10}$ (see Figure~\ref{F8-color}(b)).  
Then
\[
|L_{F_8}'(v_5) \cup L_{F_8}'(v_{11})| \ge 3.
\]

Thus there exists a color $c_{11} \in L_{F_8}'(v_{11})$ such that  
\[
|L_{F_8}'(v_5) \setminus \{c_{11}\}| \ge 2.
\]
Color $v_{11}$ with $c_{11}$.

Let $L_{F_8}''(v_i)$ be the updated lists after coloring $v_{11}$  
(see Figure~\ref{F8-color}(c)).  
Then
\[
|L_{F_8}''(v_2)| \ge 2,\qquad
|L_{F_8}''(v_3)| \ge 3,\qquad
|L_{F_8}''(v_4)| \ge 3,\qquad
|L_{F_8}''(v_5)| \ge 2.
\]

We may now color $v_2$, $v_3$, $v_4$, and $v_5$ by Lemma~\ref{lem-key-K4-edge} to obtain an $L$-coloring of $G^2$, a contradiction.  
Thus, $F_8$ is not a subgraph of $G$.

\begin{figure*}[htbp]
\begin{multicols}{3}
\begin{center}
\begin{tikzpicture}
[u/.style={fill=black, minimum size =3pt,ellipse,inner sep=1pt},node distance=1.5cm,scale=1.5]
\node[u] (v1) at (80:1){};
\node[u] (v2) at (40:1){};
\node[u] (v3) at (0:1){};
\node[u] (v4) at (320:1){};
\node[u] (v5) at (280:1){};
\node[u] (v6) at (240:1){};
\node[u] (v7) at (200:1){};
\node[u] (v8) at (160:1){};
\node[u] (v9) at (120:1){};
\node[u] (v10) at (60:1.5){};
\node[u] (v11) at (300:1.5){};
\node (C9) at (0, 0){$C_9$};

\draw   (0,0) circle[radius=1cm];
  \draw (v1) -- (v10);
  \draw (v2) -- (v10);  
  \draw (v4) -- (v11);
  \draw (v5) -- (v11);

\node[u] (w6) at (240:1.4){};
\node[u] (w7) at (200:1.4){};
\node[u] (w9) at (120:1.4){};
  \draw (v6) -- (w6); 
  \draw (v7) -- (w7); 
 \draw (v9) -- (w9);   
    
   \node[above=0.05cm, font=\small] at (v1) {$v_1$};  
   \node[right=0.1cm,font=\small] at (v2) {$v_2$};  
   \node[right=0.05cm, font=\small] at (v3) {$v_3$};     
   \node[right=0.1cm,font=\small] at (v4) {$v_4$};     
    \node[below=0.1cm,  font=\small] at (v5) {$v_5$};    
   \node[left=0.1cm,font=\small] at (v6) {$v_6$};      
   \node[above=0.2cm, left=0.05cm, font=\small] at (v7) {$v_7$};  
   \node[left=0.05cm, font=\small] at (v8) {$v_8$};    
    \node[left=0.05cm,font=\small] at (v9) {$v_9$};  
   \node[above=0.1cm,  font=\small] at (v10) {$v_{10}$};        
    \node[below=0.1cm, font=\small] at (v11) {$v_{11}$};

 \node[below=0.05cm,  font=\scriptsize] at (v1) {\textcolor{blue}{4}};
\node[below=0.05cm, left=0.1cm, font=\scriptsize] at (v2) {\textcolor{blue}{5}};  
\node[left=0.05cm,font=\scriptsize] at (v3) {\textcolor{blue}{6}};  
\node[above=0.05cm, left=0.1cm,font=\scriptsize] at (v4) {\textcolor{blue}{5}};  
\node[above=0.05cm,font=\scriptsize] at (v5) {\textcolor{blue}{4}};  
\node[above=0.05cm, font=\scriptsize] at (v6) {\textcolor{blue}{2}};  
\node[right=0.05cm, font=\scriptsize] at (v7) {\textcolor{blue}{2}};  
\node[right=0.05cm, font=\scriptsize] at (v8) {\textcolor{blue}{4}};  
\node[below=0.05cm, font=\scriptsize] at (v9) {\textcolor{blue}{3}};  
\node[ below=0.05cm, right=0.1cm, font=\scriptsize] at (v10) {\textcolor{blue}{3}}; 
\node[right=0.05cm, font=\scriptsize] at (v11) {\textcolor{blue}{3}}; 
 \end{tikzpicture}
      \vfill {\small (a) The list sizes of $|L_{F_8}(v)|$  \ \ \ \ \ \ \ \ \ \ \ \ \ \ \ \ \ \ }  
\end{center}
\par
\begin{center}
\begin{tikzpicture}
[u/.style={fill=black, minimum size =3pt,ellipse,inner sep=1pt},node distance=1.5cm,scale=1.5]
\node[u] (v1) at (80:1){};
\node[u] (v2) at (40:1){};
\node[u] (v3) at (0:1){};
\node[u] (v4) at (320:1){};
\node[u] (v5) at (280:1){};
\node[u] (v6) at (240:1){};
\node[u] (v7) at (200:1){};
\node[u] (v8) at (160:1){};
\node[u] (v9) at (120:1){};
\node[u] (v10) at (60:1.5){};
\node[u] (v11) at (300:1.5){};
\node (C9) at (0, 0){$C_9$};

\draw   (0,0) circle[radius=1cm];
  \draw (v1) -- (v10);
  \draw (v2) -- (v10);  
  \draw (v4) -- (v11);
  \draw (v5) -- (v11);

\node[u] (w6) at (240:1.4){};
\node[u] (w7) at (200:1.4){};
\node[u] (w9) at (120:1.4){};
  \draw (v6) -- (w6); 
  \draw (v7) -- (w7); 
 \draw (v9) -- (w9);   
    
   \node[above=0.05cm, font=\small] at (v1) {$v_1$};  
   \node[right=0.1cm,font=\small] at (v2) {$v_2$};  
   \node[right=0.05cm, font=\small] at (v3) {$v_3$};     
   \node[right=0.1cm,font=\small] at (v4) {$v_4$};     
    \node[below=0.1cm,  font=\small] at (v5) {$v_5$};    
   \node[left=0.1cm,font=\small] at (v6) {$v_6$};      
   \node[above=0.2cm, left=0.05cm, font=\small] at (v7) {$v_7$};  
   \node[left=0.05cm, font=\small] at (v8) {$v_8$};    
    \node[left=0.05cm,font=\small] at (v9) {$v_9$};  
   \node[above=0.1cm,  font=\small] at (v10) {$v_{10}$};        
    \node[below=0.1cm, font=\small] at (v11) {$v_{11}$};

 \node[below=0.05cm,  font=\scriptsize] at (v1) {};
\node[below=0.05cm, left=0.1cm, font=\scriptsize] at (v2) {\textcolor{blue}{2}};  
\node[left=0.05cm,font=\scriptsize] at (v3) {\textcolor{blue}{4}};  
\node[above=0.05cm, left=0.1cm,font=\scriptsize] at (v4) {\textcolor{blue}{4}};  
\node[above=0.05cm,font=\scriptsize] at (v5) {\textcolor{blue}{2}};  
\node[above=0.05cm, font=\scriptsize] at (v6) {};  
\node[right=0.05cm, font=\scriptsize] at (v7) {};  
\node[right=0.05cm, font=\scriptsize] at (v8) {};  
\node[below=0.05cm, font=\scriptsize] at (v9) {};  
\node[ below=0.05cm, right=0.1cm, font=\scriptsize] at (v10) {};  
\node[right=0.05cm, font=\scriptsize] at (v11) {\textcolor{blue}{2}};  
 \end{tikzpicture}
      \vfill {\small (b) The list sizes after coloring $v_1$ and $v_i$: $i= 6,\dots, 10$} 

\end{center}
\par
\begin{center}
\begin{tikzpicture}
[u/.style={fill=black, minimum size =3pt,ellipse,inner sep=1pt},node distance=1.5cm,scale=1.5]
\node[u] (v1) at (80:1){};
\node[u] (v2) at (40:1){};
\node[u] (v3) at (0:1){};
\node[u] (v4) at (320:1){};
\node[u] (v5) at (280:1){};
\node[u] (v6) at (240:1){};
\node[u] (v7) at (200:1){};
\node[u] (v8) at (160:1){};
\node[u] (v9) at (120:1){};
\node[u] (v10) at (60:1.5){};
\node[u] (v11) at (300:1.5){};
\node (C9) at (0, 0){$C_9$};

\draw   (0,0) circle[radius=1cm];
  \draw (v1) -- (v10);
  \draw (v2) -- (v10);  
  \draw (v4) -- (v11);
  \draw (v5) -- (v11);

\node[u] (w6) at (240:1.4){};
\node[u] (w7) at (200:1.4){};
\node[u] (w9) at (120:1.4){};
  \draw (v6) -- (w6); 
  \draw (v7) -- (w7); 
 \draw (v9) -- (w9);   
    
   \node[above=0.05cm, font=\small] at (v1) {$v_1$};  
   \node[right=0.1cm,font=\small] at (v2) {$v_2$};  
   \node[right=0.05cm, font=\small] at (v3) {$v_3$};     
   \node[right=0.1cm,font=\small] at (v4) {$v_4$};     
    \node[below=0.1cm,  font=\small] at (v5) {$v_5$};    
   \node[left=0.1cm,font=\small] at (v6) {$v_6$};      
   \node[above=0.2cm, left=0.05cm, font=\small] at (v7) {$v_7$};  
   \node[left=0.05cm, font=\small] at (v8) {$v_8$};    
    \node[left=0.05cm,font=\small] at (v9) {$v_9$};  
   \node[above=0.1cm,  font=\small] at (v10) {$v_{10}$};        
    \node[below=0.1cm, font=\small] at (v11) {$v_{11}$};

 \node[below=0.05cm,  font=\scriptsize] at (v1) {};
\node[below=0.05cm, left=0.1cm, font=\scriptsize] at (v2) {\textcolor{blue}{2}};  
\node[left=0.05cm,font=\scriptsize] at (v3) {\textcolor{blue}{3}};  
\node[above=0.05cm, left=0.1cm,font=\scriptsize] at (v4) {\textcolor{blue}{3}};  
\node[above=0.05cm,font=\scriptsize] at (v5) {\textcolor{blue}{2}};  
\node[above=0.05cm, font=\scriptsize] at (v6) {};  
\node[right=0.05cm, font=\scriptsize] at (v7) {};  
\node[right=0.05cm, font=\scriptsize] at (v8) {};  
\node[below=0.05cm, font=\scriptsize] at (v9) {};  
\node[ below=0.05cm, right=0.1cm, font=\scriptsize] at (v10) {};  
\node[right=0.05cm, font=\scriptsize] at (v11) {};  
 \end{tikzpicture}
        \vfill {\small (c) The list sizes after coloring $v_1$ and $v_i$: $i =6,\dots, 11$} 
\end{center}
\end{multicols} 
\caption{The graph $F_8$: The list sizes after coloring some vertices of $F_8$.}
\label{F8-color}
\end{figure*}
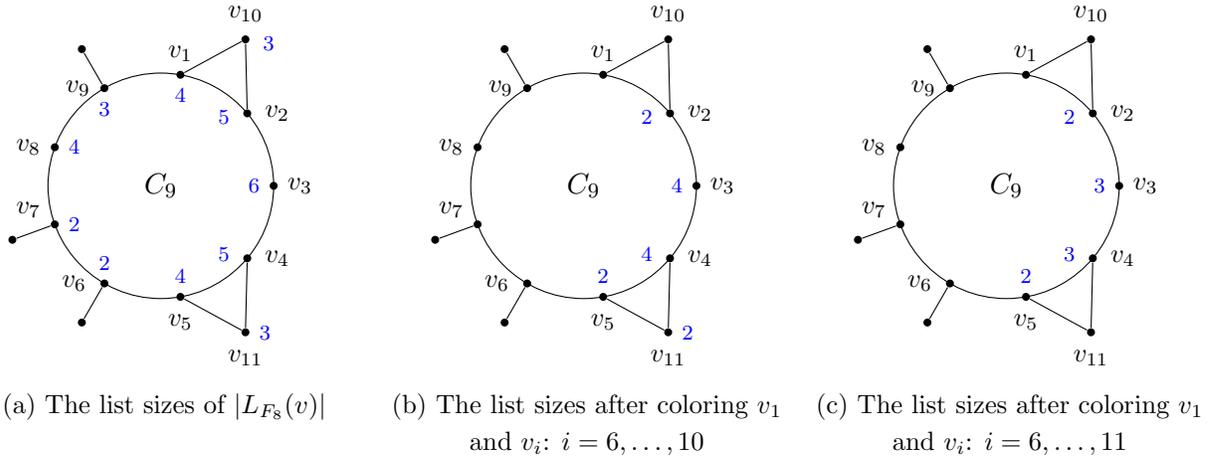

\item \textbf{Case $F_i = F_{11}$}.

As indicated in Figure~\ref{W11-color}(a), we have 
\[
|L_{F_{11}}(v_i)| \ge 
\begin{cases}
3, & i = 7, 9, 10, 11, 12,\\
4, & i = 1, 6, 8,\\
5, & i = 2, 3, 4, 5.
\end{cases}
\]

Since $|L_{F_{11}}(v_5)| \ge 5$, $|L_{F_{11}}(v_6)| \ge 4$, and  
$|L_{F_{11}}(v_{12})| \ge 3$, there exist two distinct colors  
\[
c_5 \in L_{F_{11}}(v_5), \qquad c_6 \in L_{F_{11}}(v_6),
\]
such that 
\[
|L_{F_{11}}(v_{12}) \setminus \{c_5,c_6\}| \ge 3.
\]
Color $v_5$ and $v_6$ with $c_5$ and $c_6$, respectively, and then color  
$v_7$ with some $c_7 \in L_{F_{11}}(v_7) \setminus \{c_5,c_6\}$.

Let $L_{F_{11}}'(v_i)$ be the updated lists after coloring  
$v_5$, $v_6$, and $v_7$ (see Figure~\ref{W11-color}(b)).

By an argument similar to that used in Case~$F_1$ (in the choice of $c_7$ and $c_8$),  
there exist two distinct colors  
\[
c_8 \in L_{F_{11}}(v_8), \qquad 
c_9 \in L_{F_{11}}(v_9),
\]
such that
\[
\bigl| L_{F_{11}}(v_1)\setminus\{c_8,c_9\} 
\;\cup\; L_{F_{11}}(v_{10})\setminus\{c_9\} \bigr| \ge 3.
\]
Color $v_8$ and $v_9$ with $c_8$ and $c_9$, respectively.

Let $L_{F_{11}}''(v_i)$ be the updated lists after coloring  
$v_8$ and $v_9$ (see Figure~\ref{W11-color}(c)).  
Then
\[
|L_{F_{11}}''(v_1)| \ge 2,\quad
|L_{F_{11}}''(v_2)| \ge 4,\quad
|L_{F_{11}}''(v_3)| \ge 4,\quad
|L_{F_{11}}''(v_4)| \ge 3,\quad
|L_{F_{11}}''(v_{10})| \ge 2,\quad
|L_{F_{11}}''(v_{11})| \ge 2.
\]
Moreover,
\[
|L_{F_{11}}''(v_1) \cup L_{F_{11}}''(v_{10})| \ge 3.
\]

We may now color $v_1, v_2, v_3, v_4, v_{10}, v_{11}$ by  
Lemma~\ref{lem-J2}(1), and then color $v_{12}$ to obtain an $L$-coloring of $G^2$,  
a contradiction.  
Thus, $F_{11}$ is not a subgraph of $G$.  
This completes the proof of Claim~\ref{claim-F-one}. \qed\end{enumerate}

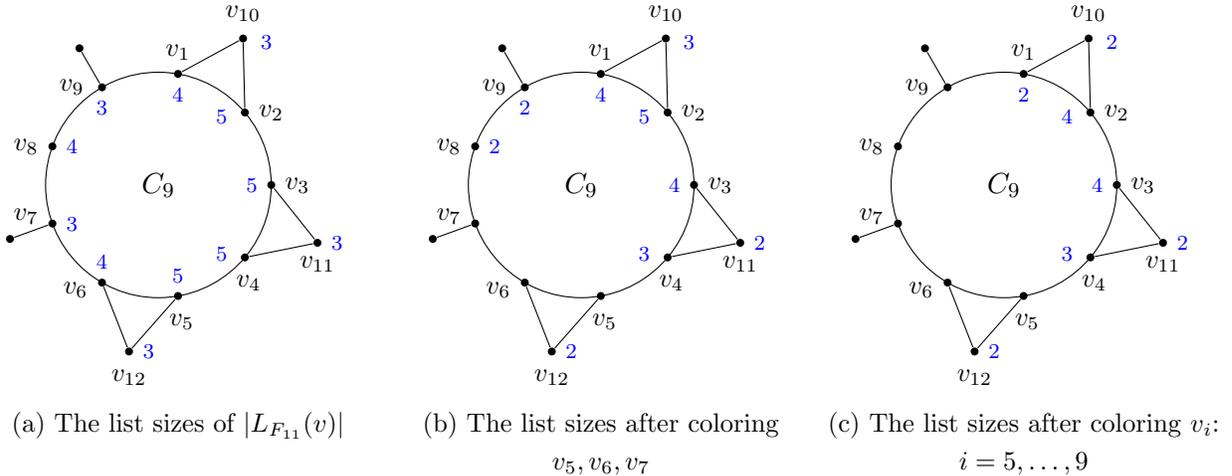
\begin{figure*}[htbp]
\begin{multicols}{3}
\begin{center}
\begin{tikzpicture}
[u/.style={fill=black, minimum size =3pt,ellipse,inner sep=1pt},node distance=1.5cm,scale=1.5]
\node[u] (v1) at (80:1){};
\node[u] (v2) at (40:1){};
\node[u] (v3) at (0:1){};
\node[u] (v4) at (320:1){};
\node[u] (v5) at (280:1){};
\node[u] (v6) at (240:1){};
\node[u] (v7) at (200:1){};
\node[u] (v8) at (160:1){};
\node[u] (v9) at (120:1){};
\node[u] (v10) at (60:1.5){};
\node[u] (v11) at (340:1.5){};
\node[u] (v12) at (260:1.5){};
\node (C9) at (0, 0){$C_9$};

\draw   (0,0) circle[radius=1cm];
  \draw (v1) -- (v10);
  \draw (v2) -- (v10);  
  \draw (v3) -- (v11);
  \draw (v4) -- (v11); 

  \draw (v5) -- (v12);
  \draw (v6) -- (v12);

\node[u] (w7) at (200:1.4){};
\node[u] (w9) at (120:1.4){};
  \draw (v7) -- (w7); 
 \draw (v9) -- (w9);   
    
   \node[above=0.05cm, font=\small] at (v1) {$v_1$};  
   \node[right=0.05cm,font=\small] at (v2) {$v_2$};  
   \node[right=0.05cm, font=\small] at (v3) {$v_3$};  
   \node[right=0.05cm, below=0.1cm, font=\small] at (v4) {$v_4$};     
    \node[right=0.05cm, below=0.1cm, font=\small] at (v5) {$v_5$};    
   \node[below=0.1cm,left=0.05cm, font=\small] at (v6) {$v_6$};      
   \node[above=0.1cm, left=0.05cm, font=\small] at (v7) {$v_7$};  
   \node[left=0.05cm, font=\small] at (v8) {$v_8$};    
    \node[left=0.1cm,font=\small] at (v9) {$v_9$};  
   \node[above=0.1cm,  font=\small] at (v10) {$v_{10}$};        
    \node[below=0.05cm, font=\small] at (v11) {$v_{11}$};   
    \node[below=0.1cm,  font=\small] at (v12) {$v_{12}$};                              
    
 \node[below=0.05cm,  font=\scriptsize] at (v1) {\textcolor{blue}{4}};
\node[below=0.05cm, left=0.1cm, font=\scriptsize] at (v2) {\textcolor{blue}{5}};  
\node[left=0.05cm,font=\scriptsize] at (v3) {\textcolor{blue}{5}};  
\node[above=0.05cm, left=0.1cm,font=\scriptsize] at (v4) {\textcolor{blue}{5}};  
\node[above=0.05cm,font=\scriptsize] at (v5) {\textcolor{blue}{5}};  
\node[above=0.05cm, font=\scriptsize] at (v6) {\textcolor{blue}{4}};  
\node[right=0.05cm, font=\scriptsize] at (v7) {\textcolor{blue}{3}};  
\node[right=0.05cm, font=\scriptsize] at (v8) {\textcolor{blue}{4}};  
\node[below=0.05cm, font=\scriptsize] at (v9) {\textcolor{blue}{3}};  
\node[ below=0.05cm, right=0.1cm, font=\scriptsize] at (v10) {\textcolor{blue}{3}}; 
\node[right=0.05cm, font=\scriptsize] at (v11) {\textcolor{blue}{3}}; 
\node[right=0.05cm, font=\scriptsize] at (v12) {\textcolor{blue}{3}}; 
 \end{tikzpicture}
       \vfill {\small (a) The list sizes of $|L_{F_{11}}(v)|$ \ \ \ \ \ \ \ \ \ \ \ \ \ \ \ \ \ \ }  
\end{center}
\par
\begin{center}
\begin{tikzpicture}
[u/.style={fill=black, minimum size =3pt,ellipse,inner sep=1pt},node distance=1.5cm,scale=1.5]
\node[u] (v1) at (80:1){};
\node[u] (v2) at (40:1){};
\node[u] (v3) at (0:1){};
\node[u] (v4) at (320:1){};
\node[u] (v5) at (280:1){};
\node[u] (v6) at (240:1){};
\node[u] (v7) at (200:1){};
\node[u] (v8) at (160:1){};
\node[u] (v9) at (120:1){};
\node[u] (v10) at (60:1.5){};
\node[u] (v11) at (340:1.5){};
\node[u] (v12) at (260:1.5){};
\node (C9) at (0, 0){$C_9$};

\draw   (0,0) circle[radius=1cm];
  \draw (v1) -- (v10);
  \draw (v2) -- (v10);  
  \draw (v3) -- (v11);
  \draw (v4) -- (v11); 

  \draw (v5) -- (v12);
  \draw (v6) -- (v12);

\node[u] (w7) at (200:1.4){};
\node[u] (w9) at (120:1.4){};
  \draw (v7) -- (w7); 
 \draw (v9) -- (w9);   
    
   \node[above=0.05cm, font=\small] at (v1) {$v_1$};  
   \node[right=0.05cm,font=\small] at (v2) {$v_2$};  
   \node[right=0.05cm, font=\small] at (v3) {$v_3$};  
   \node[right=0.05cm, below=0.1cm, font=\small] at (v4) {$v_4$};     
    \node[right=0.05cm, below=0.1cm, font=\small] at (v5) {$v_5$};    
   \node[below=0.1cm,left=0.05cm, font=\small] at (v6) {$v_6$};      
   \node[above=0.1cm, left=0.05cm, font=\small] at (v7) {$v_7$};  
   \node[left=0.05cm, font=\small] at (v8) {$v_8$};    
    \node[left=0.1cm,font=\small] at (v9) {$v_9$};  
   \node[above=0.1cm,  font=\small] at (v10) {$v_{10}$};        
    \node[below=0.05cm, font=\small] at (v11) {$v_{11}$};   
    \node[below=0.1cm,  font=\small] at (v12) {$v_{12}$};                              
    
 \node[below=0.05cm,  font=\scriptsize] at (v1) {\textcolor{blue}{4}};
\node[below=0.05cm, left=0.1cm, font=\scriptsize] at (v2) {\textcolor{blue}{5}};  
\node[left=0.05cm,font=\scriptsize] at (v3) {\textcolor{blue}{4}};  
\node[above=0.05cm, left=0.1cm,font=\scriptsize] at (v4) {\textcolor{blue}{3}};  
\node[above=0.05cm,font=\scriptsize] at (v5) {};  
\node[above=0.05cm, font=\scriptsize] at (v6) {};  
\node[right=0.05cm, font=\scriptsize] at (v7) {};  
\node[right=0.05cm, font=\scriptsize] at (v8) {\textcolor{blue}{2}};  
\node[below=0.05cm, font=\scriptsize] at (v9) {\textcolor{blue}{2}};  
\node[ below=0.05cm, right=0.1cm, font=\scriptsize] at (v10) {\textcolor{blue}{3}}; 
\node[right=0.05cm, font=\scriptsize] at (v11) {\textcolor{blue}{2}}; 
\node[right=0.05cm, font=\scriptsize] at (v12) {\textcolor{blue}{2}};   
 \end{tikzpicture}
       \vfill {\small (b) The list sizes after coloring $v_5, v_6, v_7$} 
\end{center}
\par
\begin{center}
\begin{tikzpicture}
[u/.style={fill=black, minimum size =3pt,ellipse,inner sep=1pt},node distance=1.5cm,scale=1.5]
\node[u] (v1) at (80:1){};
\node[u] (v2) at (40:1){};
\node[u] (v3) at (0:1){};
\node[u] (v4) at (320:1){};
\node[u] (v5) at (280:1){};
\node[u] (v6) at (240:1){};
\node[u] (v7) at (200:1){};
\node[u] (v8) at (160:1){};
\node[u] (v9) at (120:1){};
\node[u] (v10) at (60:1.5){};
\node[u] (v11) at (340:1.5){};
\node[u] (v12) at (260:1.5){};
\node (C9) at (0, 0){$C_9$};

\draw   (0,0) circle[radius=1cm];
  \draw (v1) -- (v10);
  \draw (v2) -- (v10);  
  \draw (v3) -- (v11);
  \draw (v4) -- (v11); 

  \draw (v5) -- (v12);
  \draw (v6) -- (v12);

\node[u] (w7) at (200:1.4){};
\node[u] (w9) at (120:1.4){};
  \draw (v7) -- (w7); 
 \draw (v9) -- (w9);   
    
   \node[above=0.05cm, font=\small] at (v1) {$v_1$};  
   \node[right=0.05cm,font=\small] at (v2) {$v_2$};  
   \node[right=0.05cm, font=\small] at (v3) {$v_3$};  
   \node[right=0.05cm, below=0.1cm, font=\small] at (v4) {$v_4$};     
    \node[right=0.05cm, below=0.1cm, font=\small] at (v5) {$v_5$};    
   \node[below=0.1cm,left=0.05cm, font=\small] at (v6) {$v_6$};      
   \node[above=0.1cm, left=0.05cm, font=\small] at (v7) {$v_7$};  
   \node[left=0.05cm, font=\small] at (v8) {$v_8$};    
    \node[left=0.1cm,font=\small] at (v9) {$v_9$};  
   \node[above=0.1cm,  font=\small] at (v10) {$v_{10}$};        
    \node[below=0.05cm, font=\small] at (v11) {$v_{11}$};   
    \node[below=0.1cm,  font=\small] at (v12) {$v_{12}$};                              
    
 \node[below=0.05cm,  font=\scriptsize] at (v1) {\textcolor{blue}{2}};
\node[below=0.05cm, left=0.1cm, font=\scriptsize] at (v2) {\textcolor{blue}{4}};  
\node[left=0.05cm,font=\scriptsize] at (v3) {\textcolor{blue}{4}};  
\node[above=0.05cm, left=0.1cm,font=\scriptsize] at (v4) {\textcolor{blue}{3}};  
\node[above=0.05cm,font=\scriptsize] at (v5) {};  
\node[above=0.05cm, font=\scriptsize] at (v6) {};  
\node[right=0.05cm, font=\scriptsize] at (v7) {};  
\node[right=0.05cm, font=\scriptsize] at (v8) {}; 
\node[below=0.05cm, font=\scriptsize] at (v9) {};  
\node[ below=0.05cm, right=0.1cm, font=\scriptsize] at (v10) {\textcolor{blue}{2}}; 
\node[right=0.05cm, font=\scriptsize] at (v11) {\textcolor{blue}{2}}; 
\node[right=0.05cm, font=\scriptsize] at (v12) {\textcolor{blue}{2}};   
 \end{tikzpicture}
         \vfill {\small (c) The list sizes after coloring $v_i$: $i = 5,\dots, 9$} 
\end{center}
\end{multicols} 
\caption{The graph $F_{11}$: The list sizes after coloring some vertices of $F_{11}$.}
\label{W11-color}
\end{figure*}

\subsubsection{$G$ does not contain $F_i$ for $i \in \{2, 3, 5, 9, 10, 12\}$} \label{subsection-F-two}

In this subsection, we apply the Combinatorial Nullstellensatz to show that $F_i$ is not a subgraph of $G$ for each $i \in \{2, 3,5, 9, 10, 12\}$.

The \emph{graph polynomial} of a graph $G$ is defined by
\[
P_{G}(\bm{x})=\prod_{u\sim v,\; u<v}(x_u-x_v),
\]
where $u\sim v$ means $u$ and $v$ are adjacent, and  
$\bm{x}=(x_v)_{v\in V(G)}$ is a vector of variables indexed by vertices of $G$.

\begin{theorem}[\cite{Alon1999}]\label{cnull}
\textnormal{(Combinatorial Nullstellensatz)}  
Let $\mathbb{F}$ be a field, and let  
$f(x_1,\ldots,x_n)\in\mathbb{F}[x_1,\ldots,x_n]$.  
Suppose $\deg(f)=\sum_{i=1}^n t_i$ with each $t_i\ge 0$, and the coefficient of  
$\prod_{i=1}^n x_i^{t_i}$ is nonzero.  
If $S_1,\ldots,S_n\subseteq \mathbb{F}$ satisfy $|S_i|\ge t_i+1$ for all $i$,  
then there exist $s_i\in S_i$ such that  
$f(s_1,\ldots,s_n)\neq 0$.
\end{theorem}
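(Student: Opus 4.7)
The plan is to prove the Combinatorial Nullstellensatz by the standard two-step argument. First I would establish a baby version: if $f\in\mathbb{F}[x_1,\ldots,x_n]$ has $\deg_{x_i}(f)\le t_i$ for each $i$ and $f$ vanishes identically on $T_1\times\cdots\times T_n$ where $|T_i|=t_i+1$, then $f\equiv 0$. This is proved by induction on $n$. The base case $n=1$ is the familiar fact that a univariate polynomial of degree $t_1$ has at most $t_1$ roots. For the inductive step, write $f=\sum_{j=0}^{t_n} g_j(x_1,\ldots,x_{n-1})\,x_n^j$; for each fixed $(a_1,\ldots,a_{n-1})\in T_1\times\cdots\times T_{n-1}$, the univariate polynomial $f(a_1,\ldots,a_{n-1},x_n)$ vanishes on the set $T_n$ of size $t_n+1$, forcing each coefficient $g_j(a_1,\ldots,a_{n-1})=0$. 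By induction each $g_j\equiv 0$, hence $f\equiv 0$.

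Second, I would reduce the general statement to the baby version by polynomial division. Set $h_i(x_i)=\prod_{s\in S_i}(x_i-s)$, a monic polynomial of degree $|S_i|\ge t_i+1$. Using the relations $x_i^{|S_i|}\equiv x_i^{|S_i|}-h_i(x_i)\pmod{h_i}$ (a polynomial of degree $<|S_i|$ in $x_i$), iteratively rewrite $f$ to obtain $\tilde f$ with $\deg_{x_i}(\tilde f)\le |S_i|-1$ for every $i$ and $\tilde f(s_1,\ldots,s_n)=f(s_1,\ldots,s_n)$ whenever $(s_1,\ldots,s_n)\in S_1\times\cdots\times S_n$. The crucial claim is that this reduction preserves the coefficient of $\prod_i x_i^{t_i}$. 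To verify it, observe that any monomial $\prod_i x_i^{a_i}$ of $f$ with some $a_i\ge|S_i|$ satisfies, after replacing $x_i^{a_i}$ by a lower-degree expression in $x_i$, that a contribution to $\prod_i x_i^{t_i}$ would require $a_j=t_j$ for all $j\ne i$ and some term of $x_i$-degree $t_i$; this would force $\sum_j a_j \ge |S_i|+\sum_{j\ne i}t_j > \sum_j t_j=\deg(f)$, contradicting the degree bound on $f$. Hence the coefficient of $\prod_i x_i^{t_i}$ in $\tilde f$ is the same (nonzero) one as in $f$, so in particular $\tilde f\not\equiv 0$.

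To conclude, apply the baby version to $\tilde f$ with $T_i\subseteq S_i$ chosen of size $t_i+1$: since $\tilde f\not\equiv 0$ and $\deg_{x_i}(\tilde f)\le|S_i|-1$, $\tilde f$ cannot vanish identically on $T_1\times\cdots\times T_n\subseteq S_1\times\cdots\times S_n$, producing $(s_1,\ldots,s_n)$ with $f(s_1,\ldots,s_n)=\tilde f(s_1,\ldots,s_n)\ne 0$. The step I expect to be most delicate is the invariance of the leading monomial coefficient under reduction: the reduction process is not canonical, and one must be careful that the degree accounting above truly rules out any new or cancelled contributions to $\prod_i x_i^{t_i}$ — in particular, the hypothesis $\deg(f)=\sum_i t_i$ (as opposed to merely $\deg(f)\le\sum_i t_i$) is used in exactly this place.
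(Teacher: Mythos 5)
The paper does not prove this statement at all: it is imported verbatim from Alon's 1999 paper and used as a black box, so there is no in-paper proof to compare against. Your argument is the standard ``reduction modulo the grid polynomials'' proof (essentially Alon's own, and the one found in most textbooks): a vanishing lemma for polynomials whose degree in each variable is small relative to the grid, followed by reduction of $f$ modulo $h_i(x_i)=\prod_{s\in S_i}(x_i-s)$, together with the observation that the coefficient of $\prod_i x_i^{t_i}$ survives the reduction because any monomial that gets rewritten in the variable $x_i$ has $x_i$-exponent at least $|S_i|\ge t_i+1$, so a descendant equal to $\prod_j x_j^{t_j}$ would force the ancestor to have total degree at least $1+\sum_j t_j>\deg f$. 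That part of your write-up --- the inductive proof of the baby version, the preservation of the key coefficient, and the identification of where the degree hypothesis enters --- is correct (with the minor caveat that the degree-accounting must be applied to the intermediate polynomials of the iterative reduction, not just to $f$ itself; this works because each reduction step does not increase total degree).

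There is, however, one genuine slip in the final step. You apply the baby version to $\tilde f$ over sets $T_i\subseteq S_i$ of size $t_i+1$, but the baby version requires $\deg_{x_i}(\tilde f)\le |T_i|-1=t_i$, whereas the reduction only guarantees $\deg_{x_i}(\tilde f)\le |S_i|-1$, which can strictly exceed $t_i$ (for instance $f=x_1^3-x_1x_2x_3$ with $t=(1,1,1)$ and $|S_1|=3$ leaves $\deg_{x_1}\tilde f=2$). As written, the conclusion that $\tilde f$ cannot vanish on $T_1\times\cdots\times T_n$ does not follow from your lemma. The repair is immediate: take $T_i=S_i$ itself. Then $\deg_{x_i}(\tilde f)\le|S_i|-1=|T_i|-1$, the baby version applies to the full grid, and since $\tilde f\not\equiv0$ there is a point of $S_1\times\cdots\times S_n$ at which $\tilde f$, hence $f$, is nonzero. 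With that one-line correction your proof is complete.
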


The following consequence is frequently used in list coloring.

\begin{theorem} \label{Null-list}
Let $G$ be a graph with $V(G)=\{v_1,\ldots,v_n\}$ and list assignment $L$.  
If $P_G(\bm{x})$ has a monomial $\prod_{i=1}^n x_i^{t_i}$ with nonzero coefficient and  
$|L(v_i)|\ge t_i+1$ for each $i$, then $G$ is $L$-colorable.
\end{theorem}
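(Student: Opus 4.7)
The plan is a direct application of the Combinatorial Nullstellensatz (Theorem~\ref{cnull}) to the graph polynomial. I would set $f(x_1,\ldots,x_n) = P_G(\bm{x})$ and choose $S_i = L(v_i) \subseteq \mathbb{F}$ for each $i$, where $\mathbb{F}$ is a field large enough to contain all the colors (for instance, we may view the colors as elements of $\mathbb{Q}$ or of a sufficiently large field). Because $P_G(\bm{x}) = \prod_{u\sim v,\, u<v}(x_u-x_v)$ is a product of $|E(G)|$ linear factors, we have $\deg(f) = |E(G)|$. The hypothesis that $P_G(\bm{x})$ contains the monomial $\prod_{i=1}^n x_i^{t_i}$ with nonzero coefficient, when taken in the standard sense used in list coloring, corresponds to the case $\sum_{i=1}^n t_i = \deg(f)$, so this is a top-degree monomial and the framework of Theorem~\ref{cnull} applies.

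Next, I would invoke Theorem~\ref{cnull} with the sets $S_i = L(v_i)$, whose cardinalities satisfy $|S_i| = |L(v_i)| \ge t_i + 1$ by hypothesis. This yields elements $s_i \in L(v_i)$ such that $P_G(s_1,\ldots,s_n) \ne 0$. Since a product vanishes exactly when one of its factors vanishes, $P_G(s_1,\ldots,s_n) \ne 0$ is equivalent to $s_u - s_v \ne 0$ for every edge $uv$ of $G$. Defining $c(v_i) = s_i$ therefore gives an assignment with $c(v_i) \in L(v_i)$ in which adjacent vertices receive distinct colors, that is, a proper $L$-coloring of $G$.

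There is essentially no obstacle here beyond correctly citing Theorem~\ref{cnull}; the only subtlety worth flagging is the interpretation of the hypothesis on the monomial: one must read ``nonzero coefficient of $\prod x_i^{t_i}$'' as asserting that this monomial has the top degree $\sum t_i = |E(G)|$, which is precisely the form required by Theorem~\ref{cnull}. With that observation, the proof is a one-line corollary, and Theorem~\ref{Null-list} is the customary form in which the Combinatorial Nullstellensatz is applied to the later reducible configurations $F_2, F_3, F_5, F_9, F_{10}, F_{12}$ treated in Subsection~\ref{subsection-F-two}.
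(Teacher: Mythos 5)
Your proposal is correct and is exactly the standard derivation the paper has in mind (the paper states Theorem~\ref{Null-list} without proof, as a well-known consequence of Theorem~\ref{cnull}). The one ``subtlety'' you flag resolves itself: $P_G(\bm{x})$ is a product of $|E(G)|$ homogeneous linear factors and hence is homogeneous of degree $|E(G)|$, so \emph{every} monomial appearing with nonzero coefficient automatically satisfies $\sum_i t_i=\deg(P_G)$, and no extra reading of the hypothesis is needed.
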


\begin{claim} \label{claim-F-two}
For each $i \in \{2,3,5,9,10, 12\}$, the graph $G$ does not contain $F_i$ as a subgraph.
\end{claim}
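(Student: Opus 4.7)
The plan is to argue by contradiction in exactly the same way as in Claim~\ref{claim-F-one}, but to replace the ad hoc coloring extensions by a single application of the Combinatorial Nullstellensatz. Suppose $G$ contains $F_i$ as a subgraph for some $i \in \{2,3,5,9,10,12\}$, and let $G_1 = G - V(F_i)$. By minimality of $G$, there is an $L$-coloring $\phi$ of $G_1^2$. As in the previous proofs, for every $v \in V(F_i)$ define
\[
L_{F_i}(v) = L(v) \setminus \{\phi(x) : vx \in E(G^2),\; x \notin V(F_i)\},
\]
so that extending $\phi$ to $V(F_i)$ is equivalent to finding an $L_{F_i}$-coloring of $F_i^2$. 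Since each vertex of $F_i$ has at most one neighbor in $G_1$, counting forbidden colors at each vertex from its distance-$\le 2$ neighbors in $G$ outside $V(F_i)$ gives explicit lower bounds $|L_{F_i}(v)| \ge \ell_i(v)$, computed exactly as in Figures~\ref{F1-color}--\ref{W11-color}.

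The next step, done separately for each $i \in \{2,3,5,9,10,12\}$, is to identify a suitable monomial in the graph polynomial $P_{F_i^2}(\bm x) = \prod_{u\sim v \text{ in } F_i^2,\,u<v}(x_u-x_v)$. For each vertex $v$ of $F_i$ I will choose an exponent $t_i(v)$ with $t_i(v) + 1 \le \ell_i(v)$, and check that $\sum_v t_i(v) = |E(F_i^2)|$, i.e.\ that the prescribed exponents match the degree of $P_{F_i^2}$. This allocation will simply take $t_i(v) = d_{F_i^2}(v)$ for the vertices with the tightest list constraint and redistribute the remaining degree to the vertices with larger lists; the bounds $\ell_i(v)$ in Figures~\ref{F1-color}--\ref{W11-color} for the analogous $F_j$ already suggest that such an allocation exists for each of the six remaining $F_i$. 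Once the exponents are fixed, I verify (by direct expansion, or equivalently by the permanent formula for $P_{F_i^2}$, which for these small graphs can be done with a short symbolic computation) that the coefficient of $\prod_v x_v^{t_i(v)}$ in $P_{F_i^2}$ is nonzero. Then Theorem~\ref{Null-list} immediately yields an $L_{F_i}$-coloring of $F_i^2$, contradicting the minimality of $G$.

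The expected main obstacle is the coefficient computation for the monomial, since $F_i^2$ has on the order of $20$--$30$ edges and the graph polynomials are quite large. For each $i$ I would try first the ``balanced'' exponent vector suggested by the list sizes; if the resulting coefficient vanishes, I would switch to a neighboring admissible exponent vector (trading one unit of exponent between two vertices whose lists both allow it) and retry. Because there are typically several admissible exponent vectors whose sum equals $|E(F_i^2)|$, and because the non-vanishing of at least one such coefficient is exactly the Alon--Tarsi condition for online list-colorability of $F_i^2$, there is good reason to expect that at least one choice works; but the actual verification is a finite, case-by-case symbolic check that will constitute the bulk of the proof.
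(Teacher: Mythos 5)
Your overall strategy is exactly the paper's: delete $V(F_i)$, color the rest by minimality, pass to the reduced lists $L_{F_i}$, and apply the Combinatorial Nullstellensatz (Theorem~\ref{Null-list}) to a monomial of $P_{F_i^2}$ whose exponents are dominated by the list sizes. However, as written your argument has a genuine gap: for this claim the entire mathematical content \emph{is} the exhibition, for each of the six graphs, of a specific top-degree monomial $\prod_v x_v^{t(v)}$ with $t(v)+1\le |L_{F_i}(v)|$ and nonzero coefficient, and you never produce these. Your fallback reasoning --- that there are several admissible exponent vectors and ``there is good reason to expect that at least one choice works'' --- is a heuristic, not a proof; nothing guarantees a priori that an admissible exponent vector with nonvanishing coefficient exists, and indeed the paper settles this only by explicit Mathematica computations (e.g.\ the coefficient of $x_1^2x_2^3x_3^2x_4^3x_5^2x_6^3x_7^2x_8^2x_9^2x_{10}$ in $P_{F_2^2}$ equals $2$). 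Until those six computations are actually carried out and at least one nonzero coefficient is recorded per case, the claim is not proved.

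Two smaller points. First, your assertion that extending $\phi$ is \emph{equivalent} to $L_{F_i}$-coloring $F_i^2$ needs the justification the paper supplies: since no vertex of $F_i$ has two neighbors outside $F_i$ and $G$ has no $4$- to $8$-cycles, two vertices of $F_i$ cannot become adjacent in $G^2$ via a path through $G - V(F_i)$ without creating a forbidden cycle, so $G^2[V(F_i)] = F_i^2$ and the pendant $3$-vertices never conflict. Second, your proposed allocation $t(v) = d_{F_i^2}(v)$ for the tightest vertices cannot be taken literally, since $\sum_v d_{F_i^2}(v) = 2|E(F_i^2)|$ exceeds the required total degree $|E(F_i^2)|$; the exponents the paper uses are well below the $F_i^2$-degrees and are found by search, not by any such rule.
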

\noindent {\bf \it Proof.}
Suppose to the contrary that $G$ contains $F_i$ as a subgraph for some $i \in \{2,3,5,9,10, 12\}$.  Let $G_1 = G - V(F_i)$.  

Since no vertex of $F_i$ has two neighbors outside $F_i$, for any two vertices $u,v \in V(G_1)$,  
if $d_G(u,v)\le 2$ then $d_{G_1}(u,v)\le 2$.  
Moreover, because $G$ contains no $4$- to $8$-cycles, any two $3$-vertices of $F_i$ that do not lie on the $9$-cycle have distance at least $3$ in $G$, so their colors never conflict.

By the minimality of $G$, the graph $G_1^2$ admits an $L$-coloring $\phi$.  
We will show that $\phi$ extends to $F_i$, producing an $L$-coloring of $G^2$ and contradicting that $G$ is a counterexample.

For each $v \in V(F_i)$, define
\[
L_{F_i}(v)
   = L(v)\setminus\{\phi(x) : vx \in E(G^2)\ \text{and}\ x \notin V(F_i)\}.
\]

\begin{enumerate}
\item \textbf{Case $F_i = F_2$}.

As indicated in Figure~\ref{F12-subgraph}, 
\[
|L_{F_2}(v_i)| \ge 
\begin{cases}
3, & i = 1,3,5,8,10, \\
4, & i = 2,4,6,7,9.
\end{cases}
\]

The graph polynomial of $F_2^2$ is
\begin{eqnarray*}
P_{F_2^2}(\bm{x})
&=& 
 (x_1 -x_2)(x_1 - x_3)(x_1 - x_8)(x_1 - x_9)(x_2 - x_3)(x_2 - x_4) \\
&& (x_2 - x_9)(x_3- x_4)(x_3 - x_5)(x_4 - x_5)(x_4 - x_6)(x_5-x_6)   \\
&&(x_5 -x_7)(x_5 -x_{10})(x_6 - x_7)(x_6 -x_8)(x_6 -x_{10})(x_7-x_8)\\
&&(x_7-x_9)(x_7-x_{10})(x_8-x_9)(x_8-x_{10}).
\end{eqnarray*}

A Mathematica computation shows that the coefficient of  
\[
x_1^2 x_2^3 x_3^2 x_4^3 x_5^2 x_6^3 x_7^2 x_8^2 x_9^2 x_{10}
\]
is $2\neq 0$.  
Thus, by Theorem~\ref{Null-list}, $\phi$ extends to $F_2^2$, yielding an $L$-coloring of $G^2$.

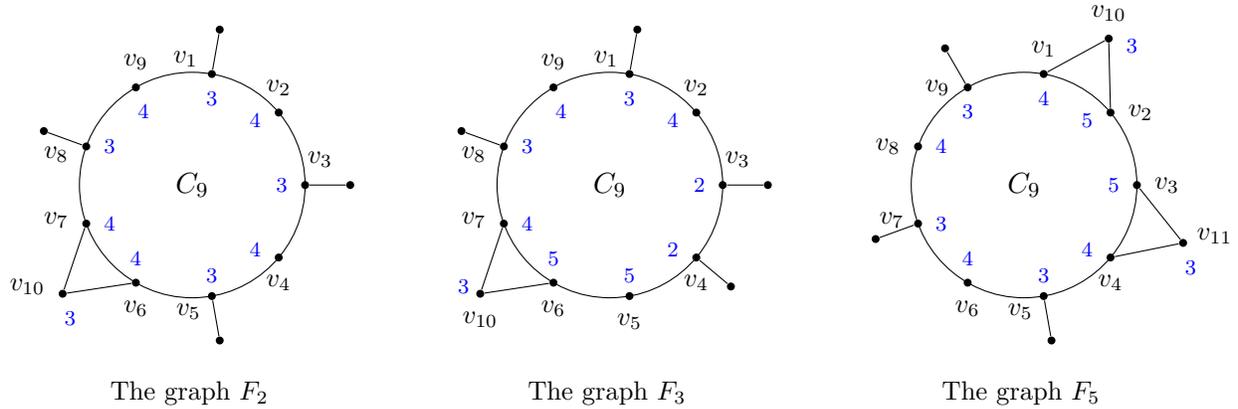
\begin{figure}[htbp]
\begin{center}
\begin{tikzpicture}
[u/.style={fill=black, minimum size =3pt,ellipse,inner sep=1pt},node distance=1.5cm,scale=1.5]
\node[u] (v1) at (80:1){};
\node[u] (v2) at (40:1){};
\node[u] (v3) at (0:1){};
\node[u] (v4) at (320:1){};
\node[u] (v5) at (280:1){};
\node[u] (v6) at (240:1){};
\node[u] (v7) at (200:1){};
\node[u] (v8) at (160:1){};
\node[u] (v9) at (120:1){};
\node[u] (v10) at (220:1.5){};
\node (C9) at (0, 0){$C_9$};

\draw   (0,0) circle[radius=1cm];
  \draw (v6) -- (v10);
  \draw (v7) -- (v10);      
  
\node[u] (w1) at (80:1.4){};
\node[u] (w3) at (0:1.4){};
\node[u] (w5) at (280:1.4){};
\node[u] (w8) at (160:1.4){};
  \draw (v1) -- (w1);
  \draw (v3) -- (w3);
  \draw (v5) -- (w5);
  \draw (v8) -- (w8); 
    
   \node[above=0.2cm, left=0.05cm, font=\small] at (v1) {$v_1$};  
   \node[above=0.1cm,font=\small] at (v2) {$v_2$};  
   \node[right=0.2cm, above= 0.1cm, font=\small] at (v3) {$v_3$};     
   \node[below=0.1cm,font=\small] at (v4) {$v_4$};     
    \node[below=0.2cm,left=0.01cm, font=\small] at (v5) {$v_5$};    
   \node[below=0.1cm,font=\small] at (v6) {$v_6$};      
   \node[above=0.05cm, left=0.1cm, font=\small] at (v7) {$v_7$};  
   \node[below=0.1cm, left=0.1cm, font=\small] at (v8) {$v_8$};    
    \node[above=0.1cm,font=\small] at (v9) {$v_9$};  
     \node[above=0.1cm, left=0.1cm, font=\small] at (v10) {$v_{10}$};          
    
 \node[below=0.1cm,  font=\scriptsize] at (v1) {\textcolor{blue}{3}};
\node[below=0.1cm, left=0.1cm, font=\scriptsize] at (v2) {\textcolor{blue}{4}};  
\node[left=0.1cm,font=\scriptsize] at (v3) {\textcolor{blue}{3}};  
\node[above=0.1cm, left=0.1cm,font=\scriptsize] at (v4) {\textcolor{blue}{4}};  
\node[above=0.05cm,font=\scriptsize] at (v5) {\textcolor{blue}{3}};  
\node[above=0.1cm, font=\scriptsize] at (v6) {\textcolor{blue}{4}};  
\node[right=0.1cm, font=\scriptsize] at (v7) {\textcolor{blue}{4}};  
\node[right=0.1cm, font=\scriptsize] at (v8) {\textcolor{blue}{3}};  
\node[right=0.1cm, below=0.1cm, font=\scriptsize] at (v9) {\textcolor{blue}{4}};  
\node[right=0.1cm, below=0.1cm, font=\scriptsize] at (v10) {\textcolor{blue}{3}}; 
\node[left=0.3cm,below=1.0cm] at (v5) {\small The graph $F_2$};
 \end{tikzpicture}
\hfill \hspace{0.1cm}
\begin{tikzpicture}
[u/.style={fill=black, minimum size =3pt,ellipse,inner sep=1pt},node distance=1.5cm,scale=1.5]
\node[u] (v1) at (80:1){};
\node[u] (v2) at (40:1){};
\node[u] (v3) at (0:1){};
\node[u] (v4) at (320:1){};
\node[u] (v5) at (280:1){};
\node[u] (v6) at (240:1){};
\node[u] (v7) at (200:1){};
\node[u] (v8) at (160:1){};
\node[u] (v9) at (120:1){};
\node[u] (v10) at (220:1.5){};
\node (C9) at (0, 0){$C_9$};

\draw   (0,0) circle[radius=1cm];
  
\node[u] (w1) at (80:1.4){};
\node[u] (w3) at (0:1.4){};
\node[u] (w4) at (320:1.4){};
\node[u] (w8) at (160:1.4){};
  \draw (v1) -- (w1);
  \draw (v3) -- (w3);
  \draw (v4) -- (w4);
  \draw (v8) -- (w8); 
  \draw (v6) -- (v10);
  \draw (v7) -- (v10);   
    
   \node[above=0.2cm, left=0.01cm, font=\small] at (v1) {$v_1$};  
   \node[above=0.1cm,font=\small] at (v2) {$v_2$};  
   \node[right=0.2cm,above= 0.1cm, font=\small] at (v3) {$v_3$};     
   \node[below=0.1cm,font=\small] at (v4) {$v_4$};     
    \node[below=0.1cm, font=\small] at (v5) {$v_5$};    
   \node[below=0.1cm,font=\small] at (v6) {$v_6$};      
   \node[above=0.05cm, left=0.1cm, font=\small] at (v7) {$v_7$};  
   \node[below=0.1cm, left=0.1cm, font=\small] at (v8) {$v_8$};    
    \node[above=0.1cm,font=\small] at (v9) {$v_9$};  
     \node[below=0.1cm,font=\small] at (v10) {$v_{10}$};            
    
 \node[below=0.1cm,  font=\scriptsize] at (v1) {\textcolor{blue}{3}};
\node[below=0.1cm, left=0.1cm, font=\scriptsize] at (v2) {\textcolor{blue}{4}};  
\node[left=0.1cm,font=\scriptsize] at (v3) {\textcolor{blue}{2}};  
\node[above=0.1cm, left=0.1cm,font=\scriptsize] at (v4) {\textcolor{blue}{2}};  
\node[above=0.05cm,font=\scriptsize] at (v5) {\textcolor{blue}{5}};  
\node[above=0.1cm, font=\scriptsize] at (v6) {\textcolor{blue}{5}};  
\node[right=0.1cm, font=\scriptsize] at (v7) {\textcolor{blue}{4}};  
\node[right=0.1cm, font=\scriptsize] at (v8) {\textcolor{blue}{3}};  
\node[right=0.1cm, below=0.1cm, font=\scriptsize] at (v9) {\textcolor{blue}{4}};  
\node[above=0.1cm, left=0.01cm, font=\scriptsize] at (v10) {\textcolor{blue}{3}}; 
\node[left=0.3cm,below=1.0cm] at (v5) {\small The graph $F_3$};
 \end{tikzpicture} 
\hfill \hspace{0.1cm}
\begin{tikzpicture}
[u/.style={fill=black, minimum size =3pt,ellipse,inner sep=1pt},node distance=1.5cm,scale=1.5]
\node[u] (v1) at (80:1){};
\node[u] (v2) at (40:1){};
\node[u] (v3) at (0:1){};
\node[u] (v4) at (320:1){};
\node[u] (v5) at (280:1){};
\node[u] (v6) at (240:1){};
\node[u] (v7) at (200:1){};
\node[u] (v8) at (160:1){};
\node[u] (v9) at (120:1){};
\node[u] (v10) at (60:1.5){};
\node[u] (v11) at (340:1.5){};
\node (C9) at (0, 0){$C_9$};

\draw   (0,0) circle[radius=1cm];
  \draw (v1) -- (v10);
  \draw (v2) -- (v10);  
  \draw (v3) -- (v11);
  \draw (v4) -- (v11);

\node[u] (w5) at (280:1.4){};
\node[u] (w7) at (200:1.4){};
\node[u] (w9) at (120:1.4){};
  \draw (v5) -- (w5);
  \draw (v7) -- (w7); 
 \draw (v9) -- (w9);   
    
   \node[above=0.1cm, font=\small] at (v1) {$v_1$};  
   \node[right=0.1cm,font=\small] at (v2) {$v_2$};  
   \node[right=0.1cm, font=\small] at (v3) {$v_3$};     
   \node[below=0.1cm,font=\small] at (v4) {$v_4$};     
    \node[below=0.2cm,left=0.01cm, font=\small] at (v5) {$v_5$};    
   \node[below=0.1cm,font=\small] at (v6) {$v_6$};      
   \node[above=0.05cm, left=0.05cm, font=\small] at (v7) {$v_7$};  
   \node[left=0.1cm, font=\small] at (v8) {$v_8$};    
    \node[left=0.1cm,font=\small] at (v9) {$v_9$};  
   \node[above=0.1cm,  font=\small] at (v10) {$v_{10}$};        
    \node[above=0.1cm, right=0.05cm, font=\small] at (v11) {$v_{11}$};               
    
 \node[below=0.1cm,  font=\scriptsize] at (v1) {\textcolor{blue}{4}};
\node[below=0.1cm, left=0.1cm, font=\scriptsize] at (v2) {\textcolor{blue}{5}};  
\node[left=0.1cm,font=\scriptsize] at (v3) {\textcolor{blue}{5}};  
\node[above=0.1cm, left=0.1cm,font=\scriptsize] at (v4) {\textcolor{blue}{4}};  
\node[above=0.05cm,font=\scriptsize] at (v5) {\textcolor{blue}{3}};  
\node[above=0.1cm, font=\scriptsize] at (v6) {\textcolor{blue}{4}};  
\node[right=0.1cm, font=\scriptsize] at (v7) {\textcolor{blue}{3}};  
\node[right=0.1cm, font=\scriptsize] at (v8) {\textcolor{blue}{4}};  
\node[below=0.1cm, font=\scriptsize] at (v9) {\textcolor{blue}{3}};  
\node[ below=0.1cm, right=0.1cm, font=\scriptsize] at (v10) {\textcolor{blue}{3}}; 
\node[right=0.1cm, below=0.1cm, font=\scriptsize] at (v11) {\textcolor{blue}{3}}; 
\node[left=0.3cm,below=1.0cm] at (v5) {\small The graph $F_5$};
 \end{tikzpicture}
\end{center} 
\caption{The graphs $F_2, F_3$ and $F_5$ and their corresponding list sizes.} 
\label{F12-subgraph}
\end{figure}


\item \textbf{Case $F_i = F_3$}.

Note that $P_{F_2^2}(\bm{x}) = P_{F_3^2}(\bm{x})$.  
As indicated in Figure~\ref{F12-subgraph},
\[
|L_{F_3}(v_i)| \ge 
\begin{cases}
2, & i = 3,4, \\
3, & i = 1,8,10, \\
4, & i = 2,7,9, \\
5, & i = 5,6.
\end{cases}
\]

A Mathematica computation shows that the coefficient of  
\[
x_1^{2}x_2^{2}x_3 x_4 x_5^{4}x_6^{4}x_7^{2}x_8^{2}x_9^{2}x_{10}^{2}
\]
is $1\neq 0$.  
Thus, by Theorem~\ref{Null-list},  
$\phi$ extends to $F_3^2$, producing an $L$-coloring of $G^2$.

\item \textbf{Case $F_i = F_5$.}
As indicated in Figure~\ref{F12-subgraph}, 
$$
|L_{F_5}(v_i)| \geq 
\begin{cases}
3, & i = 5, 7, 9,   10, 11, \\
4, & i = 1,  4, 6, 8, \\
5, & i = 2, 3.
\end{cases}
$$

The graph polynomial of $F_5^2$ is

\begin{eqnarray*} \label{poly-F1}
P_{F_5^2}(\bm{x})
&=& (x_1 -x_2)(x_1 - x_3)(x_1 - x_8) (x_1 - x_9)(x_1 - x_{10})(x_2 - x_3)(x_2 - x_4) \\
&& (x_2 - x_9)(x_2-x_{10})(x_2 - x_{11}) (x_3- x_4)(x_3 - x_5)(x_3-x_{10})(x_3 - x_{11}) \\
&&(x_4 - x_5)(x_4 - x_6) (x_4 -x_{11}) (x_5-x_6) (x_5 -x_7) (x_5 -x_{11})(x_6 - x_7)  \\
&&(x_6 -x_8)(x_7-x_8)(x_7-x_9) (x_8-x_9)(x_9-x_{10})
\end{eqnarray*}

A Mathematica computation shows that 
the coefficient of 
$ x_1^3x_2^4x_3^4x_4^3x_5^1x_6^3x_7^2x_8^3x_9^1x_{10}^1x_{11}^1$
is $-2 \not = 0$. 
Thus, by Theorem~\ref{Null-list}, $\phi$ extends to $F_5^2$, yielding an $L$-coloring of $G^2$.

\item \textbf{Case $F_i = F_9$}.

As indicated in  Figure \ref{F45-subgraph}, 
$$
|L_{F_9}(v_i)| \geq 
\begin{cases}
3, & i = 6,  9,   10, 11, 12,  \\
4, & i = 1,  5, 7, 8,  \\
5, & i = 2, 4, \\
6, & i = 3. \\
\end{cases}
$$
The graph polynomial for $F_9^2$ is 
\begin{eqnarray*} \label{poly-F1}
P_{F_9^2}(\bm{x})
&=& 
 (x_1 -x_2)(x_1 - x_3)(x_1 - x_8) (x_1 - x_9)(x_1 - x_{10})(x_2 - x_3)(x_2 - x_4) \\
&& (x_2 - x_9)(x_2-x_{10})(x_3- x_4)(x_3 - x_5)(x_3-x_{10})(x_3 - x_{11}) \\
&& (x_4 - x_5)(x_4 - x_6) (x_4 -x_{11}) (x_5-x_6) (x_5 -x_7) (x_5 -x_{11})   \\
&& (x_6 - x_7)(x_6 -x_8)(x_6 -x_{11})(x_6 -x_{12})(x_7-x_8)(x_7-x_9) \\
&& (x_7-x_{12})(x_8-x_9)(x_8-x_{12})(x_9-x_{10})(x_9-x_{12})
\end{eqnarray*}
A Mathematica computation shows that 
the coefficient of 
$ x_1^3x_2^4x_3^3x_4^4x_5^3x_6^2x_7^3x_8^3x_9^2x_{10}^1x_{11}^1x_{12}^1$
is $4 \not = 0$. Thus, by Theorem~\ref{Null-list}, $\phi$ extends to $F_9^2$, yielding an $L$-coloring of $G^2$.

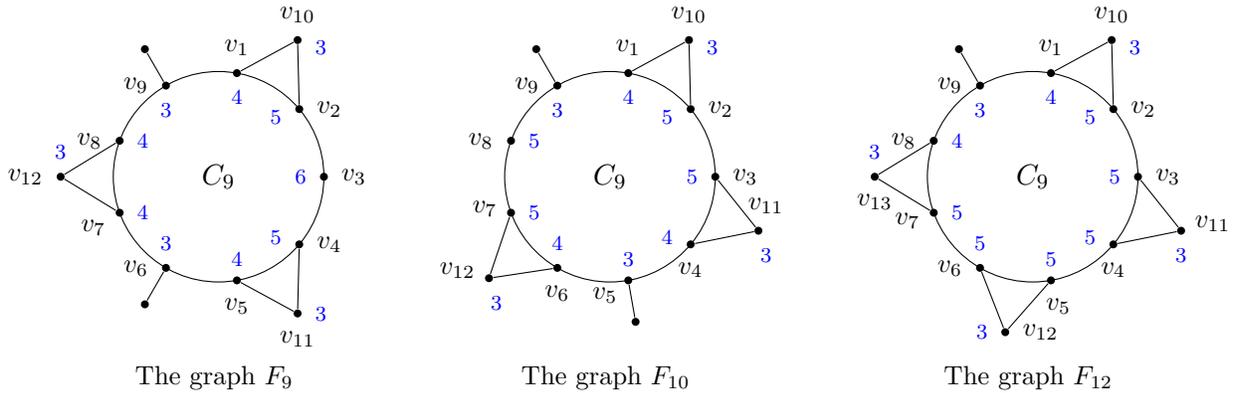
\begin{figure}[htbp]
\begin{center}
\begin{tikzpicture}
[u/.style={fill=black, minimum size =3pt,ellipse,inner sep=1pt},node distance=1.5cm,scale=1.4]
\node[u] (v1) at (80:1){};
\node[u] (v2) at (40:1){};
\node[u] (v3) at (0:1){};
\node[u] (v4) at (320:1){};
\node[u] (v5) at (280:1){};
\node[u] (v6) at (240:1){};
\node[u] (v7) at (200:1){};
\node[u] (v8) at (160:1){};
\node[u] (v9) at (120:1){};
\node[u] (v10) at (60:1.5){};
\node[u] (v11) at (300:1.5){};
\node[u] (v12) at (180:1.5){};
\node (C9) at (0, 0){$C_9$};

\draw   (0,0) circle[radius=1cm];
  \draw (v1) -- (v10);
  \draw (v2) -- (v10);  
  \draw (v4) -- (v11);
  \draw (v5) -- (v11); 

  \draw (v7) -- (v12);
  \draw (v8) -- (v12);  
  
\node[u] (w6) at (240:1.4){};
\node[u] (w9) at (120:1.4){};
  \draw (v6) -- (w6); 
 \draw (v9) -- (w9);   
    
   \node[above=0.1cm, font=\small] at (v1) {$v_1$};  
   \node[right=0.1cm,font=\small] at (v2) {$v_2$};  
   \node[right=0.1cm, font=\small] at (v3) {$v_3$};     
   \node[right=0.1cm,font=\small] at (v4) {$v_4$};     
    \node[below=0.1cm,  font=\small] at (v5) {$v_5$};    
   \node[left=0.1cm,font=\small] at (v6) {$v_6$};      
   \node[below=0.2cm, left=0.05cm, font=\small] at (v7) {$v_7$};  
   \node[left=0.1cm, font=\small] at (v8) {$v_8$};    
    \node[left=0.1cm,font=\small] at (v9) {$v_9$};  
   \node[above=0.1cm,  font=\small] at (v10) {$v_{10}$};        
    \node[below=0.1cm, font=\small] at (v11) {$v_{11}$};   
    \node[left=0.1cm,  font=\small] at (v12) {$v_{12}$};

 \node[below=0.1cm,  font=\scriptsize] at (v1) {\textcolor{blue}{4}};
\node[below=0.1cm, left=0.1cm, font=\scriptsize] at (v2) {\textcolor{blue}{5}};  
\node[left=0.1cm,font=\scriptsize] at (v3) {\textcolor{blue}{6}};  
\node[above=0.1cm, left=0.1cm,font=\scriptsize] at (v4) {\textcolor{blue}{5}};  
\node[above=0.05cm,font=\scriptsize] at (v5) {\textcolor{blue}{4}};  
\node[above=0.1cm, font=\scriptsize] at (v6) {\textcolor{blue}{3}};  
\node[right=0.1cm, font=\scriptsize] at (v7) {\textcolor{blue}{4}};  
\node[right=0.1cm, font=\scriptsize] at (v8) {\textcolor{blue}{4}};  
\node[below=0.1cm, font=\scriptsize] at (v9) {\textcolor{blue}{3}};  
\node[ below=0.1cm, right=0.1cm, font=\scriptsize] at (v10) {\textcolor{blue}{3}}; 
\node[right=0.1cm, font=\scriptsize] at (v11) {\textcolor{blue}{3}}; 
\node[above=0.1cm, font=\scriptsize] at (v12) {\textcolor{blue}{3}}; 
\node[left=0.3cm, below=1.0cm] at (v5) {\small The graph $F_9$};
 \end{tikzpicture}
\hfill 
\hspace{0.1cm} 
\begin{tikzpicture}
[u/.style={fill=black, minimum size =3pt,ellipse,inner sep=1pt},node distance=1.5cm,scale=1.4]
\node[u] (v1) at (80:1){};
\node[u] (v2) at (40:1){};
\node[u] (v3) at (0:1){};
\node[u] (v4) at (320:1){};
\node[u] (v5) at (280:1){};
\node[u] (v6) at (240:1){};
\node[u] (v7) at (200:1){};
\node[u] (v8) at (160:1){};
\node[u] (v9) at (120:1){};
\node[u] (v10) at (60:1.5){};
\node[u] (v11) at (340:1.5){};
\node[u] (v12) at (220:1.5){};
\node (C9) at (0, 0){$C_9$};

\draw   (0,0) circle[radius=1cm];
  \draw (v1) -- (v10);
  \draw (v2) -- (v10);  
  \draw (v3) -- (v11);
  \draw (v4) -- (v11);  
  \draw (v6) -- (v12);
  \draw (v7) -- (v12);

\node[u] (w5) at (280:1.4){};
\node[u] (w9) at (120:1.4){};
  \draw (v5) -- (w5);
 \draw (v9) -- (w9);   
    
   \node[above=0.1cm, font=\small] at (v1) {$v_1$};  
   \node[right=0.1cm,font=\small] at (v2) {$v_2$};  
   \node[right=0.1cm, font=\small] at (v3) {$v_3$};     
   \node[below=0.1cm,font=\small] at (v4) {$v_4$};     
    \node[below=0.2cm,left=0.01cm, font=\small] at (v5) {$v_5$};    
   \node[below=0.1cm,font=\small] at (v6) {$v_6$};      
   \node[above=0.05cm, left=0.05cm, font=\small] at (v7) {$v_7$};  
   \node[left=0.1cm, font=\small] at (v8) {$v_8$};    
    \node[left=0.1cm,font=\small] at (v9) {$v_9$};  
   \node[above=0.1cm,  font=\small] at (v10) {$v_{10}$};        
    \node[right=0.1cm, above=0.1cm, font=\small] at (v11) {$v_{11}$};    
     \node[above=0.1cm, left=0.05cm, font=\small] at (v12) {$v_{12}$};                  
    
 \node[below=0.1cm,  font=\scriptsize] at (v1) {\textcolor{blue}{4}};
\node[below=0.1cm, left=0.1cm, font=\scriptsize] at (v2) {\textcolor{blue}{5}};  
\node[left=0.1cm,font=\scriptsize] at (v3) {\textcolor{blue}{5}};  
\node[above=0.1cm, left=0.1cm,font=\scriptsize] at (v4) {\textcolor{blue}{4}};  
\node[above=0.05cm,font=\scriptsize] at (v5) {\textcolor{blue}{3}};  
\node[above=0.1cm, font=\scriptsize] at (v6) {\textcolor{blue}{4}};  
\node[right=0.1cm, font=\scriptsize] at (v7) {\textcolor{blue}{5}};  
\node[right=0.1cm, font=\scriptsize] at (v8) {\textcolor{blue}{5}};  
\node[below=0.1cm, font=\scriptsize] at (v9) {\textcolor{blue}{3}};  
\node[ below=0.1cm, right=0.1cm, font=\scriptsize] at (v10) {\textcolor{blue}{3}}; 
\node[right=0.1cm, below=0.1cm, font=\scriptsize] at (v11) {\textcolor{blue}{3}}; 
\node[right=0.1cm, below=0.1cm, font=\scriptsize] at (v12) {\textcolor{blue}{3}}; 
\node[left=0.3cm,below=1.0cm] at (v5) {\small The graph $F_{10}$};
 \end{tikzpicture}
\hfill 
\hspace{0.1cm}
\begin{tikzpicture}
[u/.style={fill=black, minimum size =3pt,ellipse,inner sep=1pt},node distance=1.5cm,scale=1.4]
\node[u] (v1) at (80:1){};
\node[u] (v2) at (40:1){};
\node[u] (v3) at (0:1){};
\node[u] (v4) at (320:1){};
\node[u] (v5) at (280:1){};
\node[u] (v6) at (240:1){};
\node[u] (v7) at (200:1){};
\node[u] (v8) at (160:1){};
\node[u] (v9) at (120:1){};
\node[u] (v10) at (60:1.5){};
\node[u] (v11) at (340:1.5){};
\node[u] (v12) at (260:1.5){};
\node[u] (v13) at (180:1.5){};
\node (C9) at (0, 0){$C_9$};

\draw   (0,0) circle[radius=1cm];
  \draw (v1) -- (v10);
  \draw (v2) -- (v10);  
  \draw (v3) -- (v11);
  \draw (v4) -- (v11); 

  \draw (v5) -- (v12);
  \draw (v6) -- (v12);  
  \draw (v7) -- (v13);
  \draw (v8) -- (v13);     
        
\node[u] (w9) at (120:1.4){};
 \draw (v9) -- (w9);   
    
   \node[above=0.1cm, font=\small] at (v1) {$v_1$};  
   \node[right=0.1cm,font=\small] at (v2) {$v_2$};  
   \node[right=0.1cm, font=\small] at (v3) {$v_3$};     
   \node[below=0.1cm,font=\small] at (v4) {$v_4$};     
    \node[right=0.1cm, below=0.1cm,  font=\small] at (v5) {$v_5$};    
   \node[left=0.1cm,font=\small] at (v6) {$v_6$};      
   \node[below=0.1cm, left=0.05cm, font=\small] at (v7) {$v_7$};  
   \node[left=0.1cm, font=\small] at (v8) {$v_8$};    
    \node[left=0.1cm,font=\small] at (v9) {$v_9$};  
   \node[above=0.1cm,  font=\small] at (v10) {$v_{10}$};        
    \node[above=0.1cm, right=0.05cm, font=\small] at (v11) {$v_{11}$};   
    \node[right=0.1cm,  font=\small] at (v12) {$v_{12}$};    
     \node[below=0.1cm, font=\small] at (v13) {$v_{13}$};                             
    
 \node[below=0.1cm,  font=\scriptsize] at (v1) {\textcolor{blue}{4}};
\node[below=0.1cm, left=0.1cm, font=\scriptsize] at (v2) {\textcolor{blue}{5}};  
\node[left=0.1cm,font=\scriptsize] at (v3) {\textcolor{blue}{5}};  
\node[above=0.1cm, left=0.1cm,font=\scriptsize] at (v4) {\textcolor{blue}{5}};  
\node[above=0.05cm,font=\scriptsize] at (v5) {\textcolor{blue}{5}};  
\node[above=0.1cm, font=\scriptsize] at (v6) {\textcolor{blue}{5}};  
\node[right=0.1cm, font=\scriptsize] at (v7) {\textcolor{blue}{5}};  
\node[right=0.1cm, font=\scriptsize] at (v8) {\textcolor{blue}{4}};  
\node[below=0.1cm, font=\scriptsize] at (v9) {\textcolor{blue}{3}};  
\node[ below=0.1cm, right=0.1cm, font=\scriptsize] at (v10) {\textcolor{blue}{3}}; 
\node[below=0.1cm, font=\scriptsize] at (v11) {\textcolor{blue}{3}}; 
\node[left=0.1cm, font=\scriptsize] at (v12) {\textcolor{blue}{3}}; 
\node[above=0.1cm, font=\scriptsize] at (v13) {\textcolor{blue}{3}}; 
\node[left=0.3cm, below=1.0cm] at (v5) {\small The graph $F_{12}$};
 \end{tikzpicture}
\end{center} 
\caption{The graphs $F_9, F_{10}$ and $F_{12}$ and their corresponding list sizes.} 
\label{F45-subgraph}
\end{figure}

\item \textbf{Case $F_i = F_{10}$}.

As indicated in  Figure \ref{F45-subgraph},
$$
|L_{F_{10}}(v_i)| \geq 
\begin{cases}
3, & i = 5, 9,  10, 11, 12,  \\
4, & i = 1,  4, 6, \\
5, & i = 2, 3, 7, 8.
\end{cases}
$$

The graph polynomial for $F_{10}^2$ is 
\begin{eqnarray*} \label{poly-F1}
P_{F_{10}^2}(\bm{x})
&=& 
 (x_1 -x_2)(x_1 - x_3)(x_1 - x_8) (x_1 - x_9)(x_1 - x_{10})(x_2 - x_3)(x_2 - x_4) \\
&& (x_2 - x_9)(x_2-x_{10})(x_2 - x_{11}) (x_3- x_4)(x_3 - x_5)(x_3-x_{10})(x_3 - x_{11}) \\
&&(x_4 - x_5)(x_4 - x_6) (x_4 -x_{11}) (x_5-x_6) (x_5 -x_7) (x_5 -x_{11})  (x_5 - x_{12}) \\
&&  (x_6 - x_7)(x_6 -x_8)(x_6-x_{12})(x_7-x_8)(x_7-x_9)  (x_7-x_{12})(x_8-x_9)\\
&&(x_8-x_{12})(x_9-x_{10})
\end{eqnarray*}

A Mathematica computation shows that the coefficient of 
$x_1^3x_2^4x_3^3x_4^3x_5^2x_6^3x_7^3x_8^4x_9^1x_{10}^1x_{11}^2x_{12}^1$
is $2 \not = 0$. Thus, by Theorem~\ref{Null-list}, $\phi$ extends to $F_{10}^2$, yielding an $L$-coloring of $G^2$.

\item \textbf{Case $F_i = F_{12}$.}

As indicated in in Figure \ref{F45-subgraph}, 
$$
|L_{F_{12}}(v_i)| \geq 
\begin{cases}
3, & i = 9,  10, 11, 12, 13,   \\
4, & i = 1,  8, \\
5, & i = 2, 3, 4, 5, 6, 7. 
\end{cases}
$$

The graph polynomial for $F_{12}^2$ is
\begin{eqnarray*} \label{poly-F1}
P_{F_{12}^2}(\bm{x})
&=& 
 (x_1 -x_2)(x_1 - x_3)(x_1 - x_8) (x_1 - x_9)(x_1 - x_{10})(x_2 - x_3)(x_2 - x_4) \\
&& (x_2 - x_9)(x_2-x_{10})(x_2 - x_{11}) (x_3- x_4)(x_3 - x_5) 
(x_3-x_{10})(x_3 - x_{11}) \\
&&(x_4 - x_5)(x_4 - x_6) (x_4 -x_{11}) (x_4-x_{12}) (x_5-x_6) (x_5 -x_7) (x_5 -x_{11}) \\
&& (x_5 - x_{12}) (x_6 - x_7)(x_6 -x_8)(x_6-x_{12})(x_6-x_{13})(x_7-x_8)(x_7-x_9) \\
&& (x_7-x_{12})(x_7-x_{13})(x_8-x_9)(x_8-x_{13})(x_9-x_{10})(x_9-x_{13}) 
\end{eqnarray*}

A Mathematica computation shows that the coefficient of   
$x_1^3x_2^3x_3^4x_4^4x_5^3x_6^3x_7^4x_8^3x_9^1x_{10}^1x_{11}^1x_{12}^2x_{13}^2$
is $-1 \not = 0$.
Thus, by Theorem~\ref{Null-list}, $\phi$ extends to $F_{12}^2$, yielding an $L$-coloring of $G^2$.
This completes the proof of Claim \ref{claim-F-two}.  \qed
\end{enumerate}

\section*{Acknowledgments}
Seog-Jin Kim was supported by the National Research Foundation of Korea(NRF) grant funded by the Korea government(MSIT)(No.NRF-2021R1A2C1005785).  Rong Luo was supported by a grant from  Simons Foundation (No. 839830).

\end{document}